\newtheorem{thm}{Theorem}[section]
\newtheorem{lem}[thm]{Lemma}
\newtheorem{prop}[thm]{Proposition}
\newcommand{\R}{\mathbb{R}}
\begin{document}
\title[Nonlinear diffusion problems with free boundaries]{Nonlinear diffusion problems with free boundaries: Convergence, transition speed and zero number arguments$^\S$}
 \thanks{$\S$ This research was partly supported by the Australian Research
Council and by the NSFC (No. 11271285). }
\author[Y. Du, B. Lou and M. Zhou]{Yihong Du$^\dag$, Bendong Lou$^\ddag$, Maolin Zhou$^\dag$}
\thanks{$\dag$ School of Science and
Technology, University of New England, Armidale, NSW 2351,
Australia.}
\thanks{$\ddag$ Department of Mathematics, Tongji
University, Shanghai 200092, China.}
\thanks{{\bf Emails:} {\sf ydu@turing.une.edu.au} (Y. Du), {\sf
blou@tongji.edu.cn} (B. Lou), {\sf zhouml@ms.u-tokyo.ac.jp} (M. Zhou)}
\date{\today}

\begin{abstract}
This paper continues the investigation of Du and Lou \cite{DLou}, where the long-time behavior of positive solutions to a nonlinear diffusion equation of the
form $u_t=u_{xx}+f(u)$ for $x$ over a varying interval $(g(t), h(t))$ was examined. Here $x=g(t)$ and $x=h(t)$ are free boundaries
evolving according to $g'(t)=-\mu u_x(t, g(t))$, $h'(t)=-\mu u_x(t,h(t))$, and $u(t, g(t))=u(t,h(t))=0$. We answer
several intriguing questions left open in \cite{DLou}.
 First we prove the conjectured convergence result in \cite{DLou} for the general case that
 $f$ is $C^1$ and $f(0)=0$. Second, 
for bistable and combustion types of $f$, we determine the asymptotic propagation speed of $h(t)$ and $g(t)$ in the transition case. More presicely, we show that when the transition case happens, for bistable type of $f$ there exists a uniquely determined $c_1>0$ such that $\lim_{t\to\infty} h(t)/\ln t=\lim_{t\to\infty} -g(t)/\ln t=c_1$, and for combustion type of $f$, there exists a uniquely determined $c_2>0$ such that $\lim_{t\to\infty} h(t)/\sqrt t=\lim_{t\to\infty} -g(t)/\sqrt t=c_2$. Our approach is based on the zero number arguments of Matano and Angenent, and on the construction of delicate upper and lower solutions.

\end{abstract}

\subjclass[2010 Mathematics Subject Classification]{35K20, 35K55, 35R35}
\keywords{Nonlinear diffusion equation, free boundary problem, asymptotic behavior,
bistable, combustion, propagation speed.} \maketitle

\section{Introduction}

We continue the work of Du and Lou \cite{DLou}  on certain nonlinear diffusion equations with free boundaries in space dimension 1. We are particularly interested in the long-time dynamical behavior of the problem for monostale, bistable and combustion types of nonlinearities. We answer
 several intriguing questions left open in \cite{DLou} and so  complete a rather general theory for the one space dimension case of this type of nonlinear free boundary problems.

Our nonlinear diffusion problem  has the following form:
\begin{equation}\label{p}
\left\{
\begin{array}{ll}
 u_t = u_{xx} + f(u), &  g(t)< x<h(t),\ t>0,\\
 u(t,g(t))= u(t,h(t))=0 , &  t>0,\\
g'(t)=-\mu\, u_x(t, g(t)), & t>0,\\
 h'(t) = -\mu \, u_x (t, h(t)) , & t>0,\\
-g(0)=h(0)= h_0,\ \ u(0,x) =u_0 (x),& -h_0\leq x \leq h_0,
\end{array}
\right.
\end{equation}
where $x=g(t)$ and $x=h(t)$ are the moving boundaries to be
determined together with $u(t,x)$, $\mu$ is a given positive
constant, $f:[0,\infty)\rightarrow \R$ is a $C^1$ function
satisfying
\begin{equation}\label{f(0)}
f(0)=0.
\end{equation}
The initial function $u_0$ belongs to  $ \mathscr {X}(h_0)$ for some
$h_0>0$, where
\begin{equation}\label{def:X}
\begin{array}{ll}
\mathscr {X}(h_0):= \Big\{ \phi \in C^2 ([-h_0,h_0]): & \phi(-h_0)=
\phi (h_0)=0,\; \phi'(-h_0)>0,\\ & \phi'(h_0)<0,\;
 \phi(x) >0 \ \mbox{in } (-h_0,h_0)\;\Big\}.
\end{array}
\end{equation}

Under these general conditions, \eqref{p} has a unique locally defined classical solution, which is globally defined if $u(t,x)$ stays finite for every $t>0$.
In particular the solution is globally defined if
there exists $C>0$ such that $u(t,x)\leq C$ whenever it is defined. Such an a priori bound of the solution is guaranteed if we assume further that
$f(u)\leq 0$ for all large $u$, say for $u\geq M$ with some $M>0$. Moreover, $g'(t)<0$ and $h'(t)>0$ as long as they are defined. Therefore,
in the case that $(u, g, h)$ is defined for all $t>0$, $g_\infty:=\lim_{t\to\infty} g(t)$ and $h_\infty:=\lim_{t\to\infty} h(t)$ are well-defined.

The first main result of \cite{DLou} is the following convergence theorem for a general nonlinear term, namely $f$ is $C^1$ satisfying $f(0)=0$.

\smallskip

\noindent
{\bf Theorem A.} {\it Under the above assumptions on $f$, suppose that  $(u,
g,h)$ is a solution of \eqref{p} that is defined for all $t>0$, and
$u(t,x)$ stays bounded, namely
\[
u(t,x)\leq C \mbox{ for all $t>0$, \ $x\in [g(t), h(t)]$ and some
$C>0$.}
\]
Then $(g_\infty, h_\infty)$ is either a finite interval or
$(g_\infty, h_\infty)=\R^1$. Moreover, if $(g_\infty, h_\infty)$ is
 a finite interval, then
$\lim_{t\to\infty}\max_{x\in[g(t),h(t)]} u(t,x)=0$, and if $(g_\infty, h_\infty)=\R^1$
then either $\lim_{t\to\infty} u(t,x)$ is a nonnegative constant
solution of
\begin{equation}
\label{ellip}
 v_{xx}+f(v)=0,\; x\in \R^1,
 \end{equation}
 or
 \begin{equation}
 \label{gamma(t)}
 u(t,x)-v(x+\gamma(t))\to 0 \mbox{ as } t\to\infty,
 \end{equation}
 where $v$ is an evenly decreasing positive solution
 of \eqref{ellip},  $\gamma: [0,\infty)\to [-h_0,h_0]$ is a
 continuous function, and the convergence of $u$ as $t\to\infty$ is uniform over any bounded interval of $x$.}

\smallskip

Here we say $v(x)$ is evenly decreasing if $v$ is an even function and $v'(x)<0$ for $x>0$.
\smallskip

When \eqref{gamma(t)} holds, it is conjectured in \cite{DLou} that $\lim_{t\to\infty} \gamma(t)$ exists. 
Our first theorem in this paper gives a
positive answer to this conjecture.

\begin{thm}
\label{conv}
If \eqref{gamma(t)} holds in Theorem A,  then $\lim_{t\to\infty} \gamma(t)=x_0$
for some $x_0\in [-h_0,h_0]$.
 Therefore we have
\[
u(t,x)-v(x+x_0)\to 0  \mbox{ as } t\to\infty.
\]
\end{thm}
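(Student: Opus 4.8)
My plan is to combine the convergence $u(t,x)-v(x+\gamma(t))\to 0$ (locally uniformly) with a zero-number argument to show that $\gamma(t)$ cannot oscillate, and then upgrade this to convergence. Since $v$ is evenly decreasing, the profile $v(\cdot+\gamma)$ is strictly monotone in $\gamma$ near $x=-\gamma$ (where it equals its maximum $v(0)$) and, more usefully, the level sets of $v$ move monotonically with $\gamma$. The natural idea is to compare $u(t,\cdot)$ with the stationary-in-shape family $\{v(\cdot+c):c\in[-h_0,h_0]\}$: each $v(\cdot+c)$ solves the PDE (as a steady state of the Cauchy problem on $\mathbb R$), so $w_c(t,x):=u(t,x)-v(x+c)$ satisfies a linear parabolic equation $w_t=w_{xx}+a(t,x)w$ on the (growing) domain, and by the Angenent–Matano theory the number of sign changes (zeros) of $x\mapsto w_c(t,x)$ on $[g(t),h(t)]$ is finite for $t>0$ and nonincreasing in $t$.

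The key steps, in order, are as follows. First I would fix a value $c$ and examine $\mathcal Z(t):=$ (number of zeros of $u(t,\cdot)-v(\cdot+c)$ in $[g(t),h(t)]$); because $u>0=v(g(t)+c)$ fails in general — here one must be slightly careful since $v(x+c)$ need not vanish at the free boundary — I would instead work on a fixed large bounded interval $I$ and use that $u(t,x)\to v(x+\gamma(t))$ uniformly on $I$ to control the zeros there, noting the free boundaries tend to $\pm\infty$. Second, I would argue by contradiction: if $\gamma(t)$ does not converge, then $\liminf_{t\to\infty}\gamma(t)=\alpha<\beta=\limsup_{t\to\infty}\gamma(t)$. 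Pick $c$ strictly between $\alpha$ and $\beta$. Along a sequence $t_n\to\infty$ with $\gamma(t_n)\to\alpha<c$, the function $u(t_n,\cdot)-v(\cdot+c)$ is, near $x=0$, close to $v(\cdot+\alpha)-v(\cdot+c)$, which (since $v$ is even and strictly decreasing away from its peak, so translates cross transversally) has a definite sign pattern producing at least a fixed number $k\ge 1$ of transversal zeros on $I$; along another sequence $s_n\to\infty$ with $\gamma(s_n)\to\beta>c$ it again has at least $k$ transversal zeros, but with the opposite ordering of the regions where $u-v(\cdot+c)$ is positive versus negative. Third, I would show this forces $\mathcal Z$ to strictly drop infinitely often (each time $\gamma$ crosses $c$ transversally, a zero must enter through the "interior" of $I$, which cannot happen for a nonincreasing zero number once $t$ is large), contradicting finiteness of $\mathcal Z(t)$ for large $t$. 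Hence $\gamma(t)$ converges; calling the limit $x_0\in[-h_0,h_0]$ and feeding it back into \eqref{gamma(t)} gives $u(t,x)-v(x+x_0)\to 0$.

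A cleaner variant, which I would actually try to push through, is to use the zero-number argument directly to control oscillation: for any two times $s<t$ with $\gamma(s)=\gamma(t)=c$ but $\gamma$ not identically $c$ in between, the spatial profiles $u(s,\cdot)-v(\cdot+c)$ and $u(t,\cdot)-v(\cdot+c)$ each vanish "extra" times compared to what monotone drift would allow, and one shows the zero number at an intermediate time exceeds the zero number at time $t$ — again contradicting monotonicity in the limit. The point common to both versions is that transversal crossings of the level $\gamma=c$ by the (asymptotic) phase create and destroy interior zeros, which an eventually-constant finite zero number forbids.

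The main obstacle I anticipate is the boundary behavior: $v(x+c)$ does not satisfy the free boundary conditions, so the zero-number machinery of Angenent–Matano must be applied with care near $x=g(t)$ and $x=h(t)$ — in particular, one needs that no zeros are "created" at the moving boundaries. I expect this is handled exactly as in \cite{DLou}: since $g_\infty=-\infty$, $h_\infty=+\infty$, for any fixed bounded interval $I=[-L,L]$ the free boundaries eventually leave $I$, and on $[g(t),-L]$ and $[L,h(t)]$ one has $u$ small (by Theorem A, since $u\to v(\cdot+\gamma(t))$ and $v$ decays, or by a boundary estimate) while $v(\cdot+c)$ is bounded below away from zero there for $|c|\le h_0$ and $L$ large, so $u-v(\cdot+c)<0$ on the two end segments and all sign changes of $u-v(\cdot+c)$ occur in the fixed interior $I$ where the uniform convergence applies. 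Making this "no zeros escape or enter at the boundary" statement rigorous, and correctly counting the transversal zeros of $v(\cdot+\alpha)-v(\cdot+c)$ versus $v(\cdot+\beta)-v(\cdot+c)$ (using that $v$ is even and strictly decreasing for $x>0$, so two distinct translates of $v$ meet at exactly one point), is the technical heart of the argument.
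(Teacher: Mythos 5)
Your proposal takes a genuinely different route from the paper, but it has a gap that I do not think can be closed along the lines you sketch. You compare $u(t,\cdot)$ with the stationary translates $v(\cdot+c)$ and try to deduce a contradiction from the zero number of $w_c(t,x)=u(t,x)-v(x+c)$. The paper instead compares $u(t,\cdot)$ with its own \emph{reflection} about a fixed point $x_0$: setting $\tilde u(t,x)=u(t,x_0+x)$, it studies $\tilde w(t,x)=\tilde u(t,x)-\tilde u(t,-x)$ on $[-\tilde k(t),\tilde k(t)]$ with $\tilde k(t)=\min\{\tilde h(t),-\tilde g(t)\}$. These are not the same object, and the difference matters for two reasons.

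First, your control of the boundary behavior is incorrect. You claim that for $|c|\le h_0$ and $L$ large, $v(\cdot+c)$ is ``bounded below away from zero'' on $[L,h(t)]$ and $[g(t),-L]$, giving $u-v(\cdot+c)<0$ on those end segments. But $v(x+c)\to 0$ as $|x|\to\infty$ and $h(t)\to+\infty$, so this lower bound is false on the unbounded (in $t$) tail intervals. In fact, near a fixed point $x=L$ the sign of $u(t,L)-v(L+c)\approx v(L+\gamma(t))-v(L+c)$ is positive when $\gamma(t)<c$ and negative when $\gamma(t)>c$; the sign at $x=\pm L$ oscillates with $\gamma$. Consequently there \emph{are} sign changes of $w_c(t,\cdot)$ drifting in and out of your fixed interval $I$, and Lemma~\ref{zero-number} cannot be applied on $I$ across times when $w_c(t,\pm L)$ vanishes. (On the full interval $[g(t),h(t)]$ the sign near the free boundaries is always negative since $u=0<v(\cdot+c)$ there, but that does not give what you need.)

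Second, and more fundamentally, the step ``each time $\gamma$ crosses $c$ transversally, a zero must enter through the interior, forcing the zero number to drop'' is not justified. Zero-number monotonicity forbids interior \emph{creation} of zeros; it does not forbid a single zero from drifting back and forth across a fixed location. If $w_c(t,\cdot)$ always has exactly one nondegenerate zero near $x=0$ whose position oscillates, the zero number stays constant and no contradiction arises. What you would need is a \emph{degenerate} zero occurring infinitely often, and comparison with $v(\cdot+c)$ does not produce one: even when $\gamma(\tau)=c$, the $C^2_{loc}$-convergence gives $u(\tau,\cdot)-v(\cdot+c)$ small, not a vanishing of both the function and its $x$-derivative at a common point. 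The reflection trick supplies exactly this missing degeneracy: by parabolic regularity, for large $t$ the function $u(t,\cdot)$ has a unique maximum point $x(t)\approx-\gamma(t)$ depending continuously on $t$; if $\gamma$ oscillates, $x(t)$ takes a fixed value $x_0$ along a sequence $t_k\to\infty$, and at each such time $\tilde w(t_k,0)=0$ and $\tilde w_x(t_k,0)=2u_x(t_k,x_0)=0$, i.e.\ $0$ is a degenerate interior zero of $\tilde w(t_k,\cdot)$. Lemma~\ref{lem2.3}(ii) then forces the (finite) zero number of $\tilde w$ to strictly drop at each $t_k$, giving the contradiction. The paper also has to control the endpoints of the reflected domain $[-\tilde k(t),\tilde k(t)]$ where $\tilde h$ and $-\tilde g$ may cross; this is handled in Lemmas~\ref{lem2.3} and~\ref{lem2.4}, a subtlety with no analogue in your setup. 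I would recommend switching to the reflection argument: it is precisely tailored to detect oscillation of the phase via degeneracies of the symmetric difference.
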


For monostable, bistable and combustion types of $f(u)$ (to be recalled in detail below), \cite{DLou} examined the long-time behavior of
$(u,g,h)$. If $f(u)$ is monostable, it is shown that there is a spreading-vanishing dichotomy:

\smallskip

\noindent
{\bf Theorem B.} {\it Suppose that $f(u)$ is monostable. Then
the solution $(u,g,h)$ is defined globally and as $t\to\infty$, either
\begin{itemize}
\item[(i)] {\bf Spreading:} $(g_\infty,h_\infty)=\R^1$ and $\lim_{t\to\infty} u(t,x)=1$ locally uniformly in $\R^1$,
\\
 or
\item[(ii)]{\bf Vanishing:} $(g_\infty, h_\infty)$ is a finite interval with length no bigger than $\pi/\sqrt{f'(0)}$ and
$\lim_{t\to\infty}\max_{g(t)\leq x\leq h(t)}u(t,x)=0$.
\end{itemize}
}
\smallskip

In contrast, for bistable and combustion types of $f(u)$, a trichotomy holds:

\smallskip

\noindent
{\bf Theorem C.}
{\it
 If $f(u)$ is bistable, then the solution $(u,g,h)$ is defined globally and as $t\to\infty$, either
\begin{itemize}
\item[(i)] {\rm \bf Spreading:} $(g_\infty, h_\infty)=\R^1$ and
$
\lim_{t\to\infty}u(t,x)=1$  locally uniformly in $\R^1$,\\
or

\item[(ii)] {\rm \bf Vanishing:} $(g_\infty, h_\infty)$ is a finite interval
and
$
\lim_{t\to\infty}\max_{g(t)\leq x\leq h(t)} u(t,x)=0,
$\\
or

\item[(iii)] {\rm \bf Transition:} $(g_\infty, h_\infty)=\R^1$ and there exists
a continuous function $\gamma: [0,\infty)\to [-h_0,h_0]$ such that \footnote{By Theorem 1.1, the conclusion here can now be improved to: There exists $x_0\in [-h_0,h_0]$ such that $u(t,x)-V(x+x_0)\to 0$ in $L^\infty_{loc}(\R^1)$ as $t\to \infty$.}
\[
\lim_{t\to\infty}|u(t,x)- V (x+\gamma(t))|=0 \mbox{ locally
uniformly in $\R^1$},
\]
where $V$ is the unique ground state, that is, the unique positive solution to
\[v''+f(v)=0 \; (x\in\R^1),\; v'(0)=0,\; v(-\infty)=v(+\infty)=0.\]
\end{itemize}
}

\smallskip

\noindent
{\bf Theorem D.}
{\it If $f(u)$ is of combustion type, then the solution $(u,g,h)$ is defined globally and as $t\to\infty$, either
\begin{itemize}
\item[(i)] {\rm \bf Spreading:} $(g_\infty, h_\infty)=\R^1$ and
$
\lim_{t\to\infty}u(t,x)=1$ locally uniformly in $\R^1$,
\\
or

\item[(ii)]{\rm \bf Vanishing:} $(g_\infty, h_\infty)$ is a finite interval
and
$
\lim_{t\to\infty}\max_{g(t)\leq x\leq h(t)} u(t,x)=0,
$\\
or

\item[(iii)] {\rm \bf Transition:} $(g_\infty, h_\infty)=\R^1$ and
$
\lim_{t\to\infty}u(t,x)=\theta$ locally uniformly in $\R^1$, where $\theta$ is the largest zero of $f(u)$ in $(0,1)$.
\end{itemize}
}
\smallskip

If we take the initial function of the form $u_0=\sigma \phi$ for some $\phi\in \mathscr {X}(h_0)$,
it is shown in \cite{DLou} that in Theorems C and D, there exists $\sigma^* = \sigma^* (h_0, \phi)\in (0,\infty]$
such that vanishing happens when $ 0<\sigma < \sigma^*$, spreading
happens when $\sigma>\sigma^*$, and transition happens when $\sigma=\sigma^*$.

When spreading happens, the following result of \cite{DLou} gives a first estimate of the spreading speed.

\smallskip

\noindent
{\bf Theorem E.} {\it  Suppose that $f(u)$ is of monostable, bistable or combustion type. Then the problem
\begin{equation}\label{prop-profile}
\left\{
  \begin{array}{l}
  q_{zz} - c q_z + f(q) =0\ \ \mbox{ for }  z\in (0,\infty),\\
  q(0)=0, \; \mu q_z(0) = c,\; q(\infty)=1,\; q(z)>0 \mbox{ for } z>0.
  \end{array}
  \right.
\end{equation}
has a unique solution pair $(c,q)=(c^*, q^*)$, and $c^*>0$, $(q^*)'(z)>0$.
Moreover, if spreading happens in Theorems B, C or D, then
\[
\lim_{t\to\infty}\frac{h(t)}{t}=\lim_{t\to\infty}\frac{-g(t)}{t}=c^*.
\]
}

What is missing from \cite{DLou} is an estimate of the propagation speed of $h(t)$ and $g(t)$ in the transition cases of Theorems C and D. This turns out to be a difficult mathematical question, especially for the combustion case. Our second main result in this paper gives a first estimate of the propagation speed for these transition cases.

In order to state these estimates precisely, we recall that $f$ is called {\bf bistable},  if $f\in C^1$ and it
satisfies
\begin{equation}\label{bi}
f(0)=f(\theta)= f(1)=0, \quad f(u) \left\{
\begin{array}{l}
<0 \ \ \mbox{in } (0,\theta),\\
>0\ \  \mbox{in } (\theta, 1),\\
< 0\ \ \mbox{in } (1,\infty)
\end{array} \right.
\end{equation}
for some $\theta\in (0,1)$,  $f'(0)<0$, $f'(1)<0$ and
\begin{equation}\label{unbalance}
\int_0^1 f(s) ds >0.
\end{equation}
We say $f$ is of {\bf combustion type},  if $f\in C^1$ and it
satisfies
\begin{equation}\label{combus}
f(u)=0 \ \ \mbox{in } [0,\theta], \quad f(u) >0 \ \mbox{in }
(\theta,1), \quad f'(1)<0,\quad f(u) < 0 \ \mbox{in } (1, \infty)
\end{equation}
for some $\theta \in (0,1)$, and there exists a small $\delta  >0$
such that $f(u)$ is nondecreasing in $(\theta, \theta+\delta)$.

\begin{thm}
\label{transition} Suppose additionally \footnote{In the combustion case, $f(u)\equiv 0$ in $[0,\theta]$, and hence \eqref{1+alpha} is automatically satisfied.}
\begin{equation}
\label{1+alpha}
f\in C^{1+\alpha}([0,\delta]) \mbox{ for some small $\delta>0$ and some $\alpha\in (0,1)$}.
\end{equation}
Then in the transition case of Theorem C, we have
\[\mbox{
$h(t), -g(t) =\lambda_0\ln t + O(1)$ with $\lambda_0 =[-f'(0)]^{-1/2}$,}
 \]
 and in the transition case of Theorem D, we have
 \[
 h(t), -g(t)=2\xi_0\sqrt{t} \,[1+o(1)],
  \]
   where $\xi_0>0$ is uniquely determined by
\begin{equation}\label{def-xi_0}
2 \xi_0 e^{\xi^2_0} \int _0^{\xi_0}e^{-s^2}ds=\mu \theta.
\end{equation}
\end{thm}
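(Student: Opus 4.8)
I would treat the two cases by the same three–step scheme, run separately (and by verbatim identical arguments) at the two free boundaries; by Theorem~\ref{conv} and Theorem C the solution is asymptotically a translate of the even profile $V$ in the bistable case, and asymptotically the constant $\theta$ in the combustion case, so $g(t)$ and $h(t)$ must have the same asymptotics. Step 1: use the already known convergence together with a zero number (intersection–comparison) argument to show that, for all large $t$, $u(t,\cdot)$ is monotone on a right boundary layer $[R,h(t)]$, and moreover is uniformly small there in the bistable case and below $\theta$ there in the combustion case; on this layer the nonlinearity is then (essentially) absent and \eqref{p} collapses to an explicitly solvable model. Step 2: solve the model and read off the predicted law for $h(t)$. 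Step 3: promote the heuristic to a proof by trapping $h(t)$ between the free boundaries of carefully built super- and subsolutions of \eqref{p} whose own boundaries obey the predicted law, using the comparison principle for the free boundary problem. The zero number machinery of Matano and Angenent is used in Step 1 and again in Step 3, to compare $u(t,\cdot)$ with translates of the relevant steady states / self–similar profiles and thereby exclude oscillations that would spoil the matching.

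\textbf{The bistable case.} Here $f'(0)=-\kappa^2<0$ with $\kappa=\lambda_0^{-1}$, and a phase–plane analysis of $v''+f(v)=0$ gives the sharp decay $V(x)e^{\kappa x}\to A$, $V'(x)e^{\kappa x}\to-\kappa A$ as $x\to+\infty$ for some $A>0$. On the boundary layer $[R,h(t)]$ the solution is exponentially small, so by \eqref{1+alpha} it obeys $u_t=u_{xx}-\kappa^2 u+O(|u|^{1+\alpha})$; since $h'(t)\to 0$, to leading order the layer profile is the static one vanishing at $x=h(t)$, which forces $-u_x(t,h(t))\sim cA e^{-\kappa h(t)}$ for an explicit constant $c>0$, whence, by the free boundary condition at $x=h(t)$, $h'(t)\sim\mu cA\,e^{-\kappa h(t)}$. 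Integrating this scalar ODE gives $e^{\kappa h(t)}=\kappa\mu cA\,t+O(1)$, i.e. $h(t)=\lambda_0\ln t+O(1)$. To make this rigorous I would, for large $t_0$ and suitable constants $M^\pm$, build super- and subsolutions of \eqref{p} whose profile near the boundary is $V_{L(t)}(\cdot-\rho(t))$, where $V_L>0$ solves $V_L''+f(V_L)=0$ on $(-L,L)$ with $V_L(\pm L)=0$, where $L(t),\rho(t)$ are chosen so that the corresponding boundary equals $\lambda_0\ln t\pm M^\pm$, and glue this to a controlled interior part; the parabolic inequality holds because $\dot L(t)=O(1/t)$ is lower order than every other term, and the one–sided free boundary inequalities hold because of the exact relation between $V_L'(\pm L)$, the layer height, and $M^\pm$. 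The hypothesis \eqref{1+alpha} is precisely what is needed to absorb the remainder $O(|u|^{1+\alpha})$ into these inequalities and keep the error $O(1)$ rather than merely $o(\ln t)$.

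\textbf{The combustion case.} Since $f\equiv0$ on $[0,\theta]$, on the set $\{u<\theta\}$ — which for large $t$ is (a pair of) intervals abutting the free boundaries — $u$ solves the pure heat equation $u_t=u_{xx}$, with $u=0$ and $h'(t)=-\mu u_x(t,h(t))$ at $x=h(t)$: a one–phase Stefan problem. Its symmetric self–similar solution has free boundary $2\xi_0\sqrt t$ and profile $u=\theta+C\int_0^{x/(2\sqrt t)}e^{-s^2}ds$ with $C<0$, and imposing the Stefan condition on this profile is exactly the transcendental relation \eqref{def-xi_0}. Viewed on the $x$–scale this profile equals $\theta+o(1)$ on every bounded set, so it is consistent with the conclusion of Theorem D; the corner it carries at the symmetry point, and the thin central set (of width $o(\sqrt t)$) where $u$ may slightly exceed $\theta$ and $f>0$, contribute only at lower order. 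The proof then builds super- and subsolutions of \eqref{p} from this error–function profile — replacing $\xi_0$ by $\xi_0\pm\ve$, matching the constant $\theta$ on the inside and cutting off at $0$ on the outside — and checks, again with a zero number bound on the crossings of $u(t,\cdot)$ with $\theta$ and with dilated copies of the self–similar profile, that the interior and corner regions do not interfere at the $\sqrt t$ scale; comparison then yields $h(t)=2\xi_0\sqrt t\,[1+o(1)]$.

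\textbf{Main obstacle.} In both cases the heart of the matter is Step 3: designing profiles that simultaneously satisfy the parabolic inequality in the bulk and in the layer, meet the one–sided free boundary inequality at their own moving boundary with exactly the target leading constant ($\lambda_0$, resp. $2\xi_0$), and are correctly ordered against $u$ at time $t_0$ using only the locally uniform convergence from Theorems A, C, D and Theorem~\ref{conv} (which forces one first to obtain crude size bounds on $h(t)$ before refining). The combustion case is the harder one, because the model is a genuine free boundary (Stefan) problem rather than a scalar ODE, the self–similar comparison must be controlled uniformly as $t\to\infty$, and the region where $u\ge\theta$ must be shown not to interfere; this is also why there only the weaker conclusion $h(t)=2\xi_0\sqrt t\,[1+o(1)]$, rather than an $O(1)$ error, can be expected.
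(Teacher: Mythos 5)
Your outline for the bistable case is broadly consonant with the paper's: trap $h(t)$ between free boundaries of super/subsolutions whose boundaries track $\lambda_0\ln t$. The paper implements this more directly than you suggest: it does not pass through the heuristic ODE $h'\sim\mu cA e^{-\kappa h}$ nor through finite-interval ground states $V_L$, but simply compares $u$ against $V(x+x_0\pm1)\mp m/t$ (with a linear cut-off for the supersolution near its boundary), after computing the asymptotics of the level curve $\xi(t)$ defined by $V(\xi(t))=m/t$. No zero-number argument is used in the bistable case, contrary to what you suggest in Step~3; the construction is pure comparison. These differences are stylistic, and your variant could be carried out.

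In the combustion case there is a genuine gap. You correctly identify the self-similar one-phase Stefan solution $\Phi$ and the relation \eqref{def-xi_0} as governing the $\sqrt t$ law, and you correctly flag the potential interference from the region $\{u\ge\theta\}$. But you dismiss that region by asserting it has width $o(\sqrt t)$ rather than proving it, and the tools you propose for this — zero numbers of $u-\theta$ and of $u$ against dilated $\Phi$'s — will not deliver it. In the symmetric setting $u(t,\cdot)-\theta$ has exactly two sign changes at $\pm\theta(t)$ for all large $t$, so its zero number is constant and carries no information about $\theta(t)/h(t)$; and $\Phi$ is only a comparison function on $\{u<\theta\}$, so comparison with it presupposes control of $\theta(t)$ rather than producing it. The paper's essential device, absent from your proposal, is the one-parameter family of ODE profiles $V_b$ solving $V_b''+f(V_b)=0$, $V_b(0)=\theta+b$, $V_b'(0)=0$, with $V_b(l(b))=\theta$, $V_b(L(b))=0$, together with the quantitative fact $l(b)/L(b)\to0$ as $b\to0$ (which rests on the monotonicity of $f$ near $\theta$). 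The heart of Section~4 is an exhaustive zero-number classification of the sign-changing patterns of $u(t,\cdot)-V_b(\cdot)$ (Lemmas~\ref{T1<T2}--\ref{T1=T2}); combined with $l(b)/L(b)\to0$, this yields $\theta(t)/h(t)\to0$ (condition~\eqref{assump}), which is exactly what makes the Stefan comparison in Proposition~\ref{prop:propagation-speed-com} legitimate. You also omit the preliminary reduction (Lemma~\ref{lem:finite dif}), again a zero-number argument, that any two transition solutions have boundaries within $O(1)$ of each other; this is what permits passing to a symmetric initial datum, and the subsequent analysis leans heavily on that evenness. Without the $V_b$ machinery and this reduction, your Step~3 for combustion cannot close.
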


Free boundary problems of the form \eqref{p} was first studied in \cite{DLin} for the special case $f(u)=au-bu^2$.
When $f(u)\equiv 0$, \eqref{p} reduces to the classical Stefan problem describing the melting of ice in contact with water (in a simplified one space dimension setting).
In such a situation, $u(t,x)$ represents the temperature of water, and the free boundaries are the ice-water interphases.
Problem \eqref{p} with a nonlinear $f(u)$ may arise if one considers the situation that water is replaced by a heat conductive and chemically reactive liquid, where $f(u)$ governs the reaction. The study of \cite{DLin}, however, was motivated by
investigation of the spreading of a new or invasive species, where
the free boundaries $x=g(t)$ and $x=h(t)$ represent the spreading fronts of
the species whose density is $u(t,x)$.  Together with \cite{DLou}, the current paper provides a rather complete understanding of
the dynamics of \eqref{p} in one space dimension.
The high space dimension versions of \eqref{p}
was considered in \cite{DG1, DG2, DMW, DMZ2}, but the theory for this more challenging situation is not as  complete yet compared with the theory for the one space dimension case established in \cite{DLou} and here.

One main ingredient in our approach here is the zero number arguments of Matano and Angenent. The zero number argument was first introduced by Matano \cite{Matano} to  prove some important convergence results for nonlinear parabolic equations over bounded spatial intervals,  and it was  further developed by Angenent \cite{Ang} and others. It has proven to be a very powerful tool for treating parabolic equations in one space dimension, with  several new applications
found recently  (see, for example, \cite{DLou, DM, DGM, P, PY}). Our application of the zero number argument here (especially in Section 4) provides one more example, but with a rather different nature.

We would like to remark that, the  estimate in Theorem E for the spreading speed has been sharpened recently. In \cite{DMZ} it is proved that
when spreading happens in Theorems B, C or D,  there exist $h^0, g^0\in\R^1$ (depending on $f$ and the initial conditions) such that, as $t\to\infty$,
$$
|h(t)-c^* t-h^0| \to 0,\ \ |g(t)+c^* t+g^0|\to 0,\ \ h'(t)\to c^*,\; g'(t)\to -c^*
$$
and
\[
\max_{0\leq x\leq h(t)} |u(t,x)-q^*(h(t)-x)|\to 0, \; \max_{g(t)\leq x\leq 0} |u(t,x)-q^*(x-g(t))|\to 0.
\]
However, it appears unliekly that the techniques in this paper can be modified to prove similar sharper result for the transition case.

The rest of this paper is organized as follows. In Section 2, we prove Theorem 1.1 by using and extending the zero number argument of Angenent \cite{Ang}. In Section 3, we prove Theorem 1.2 for the bistable case, by constructing suitable upper and lower solutions. Section 4 is technically the most challenging part of the paper, where  we prove Theorem 1.2 for the combustion case; here  we make use of the zero number arguments again to handle several key steps of the proof.

\section{Zero Number Arguments and Convergence }\label{sec:convergence}

In this section we make use of the zero number arguments to prove Theorem \ref{conv}.
The following lemma is an easy consequence of the proofs of Theorems C and D of Angenent \cite{Ang}, which is the starting point of our zero number arguments.

\begin{lem}\label{angenent}
Let $u:[0, T]\times [0,1]\to \R^1$ be a bounded classical solution of
\begin{equation}
\label{linear}
u_t=a(t,x)u_{xx}+b(t,x)u_x+c(t,x)u
\end{equation}
with boundary conditions
\[
u(t,0)=l_0(t), \;u(t, 1)=l_1(t),
\]
 where $l_0, l_1\in C^1([0,T])$, and each function is either identically zero or never zero for $t\in [0,T]$. Suppose that
\[
a, 1/a, a_t, a_x, a_{xx}, b, b_t, b_x, c \in L^\infty, \mbox{ and } u(0,\cdot)\not\equiv 0 \mbox{ when $l_0=l_1\equiv 0$}.
\]
 Then for each $t\in (0, T]$, the number of zeros of $u(t,\cdot)$  in $[0, 1]$ is finite, which will be denoted by $z(t)$.
Moreover, $z(t)$ is nonincreasing in $t$ for $t\in (0, T]$, and if for some $t_0\in(0, T]$ the function $u(t_0,\cdot)$ has a degenerate zero $x_0\in [0,1]$, then $z(t_1)>z(t_2)$ for all $t_1, t_2\in (0,T]$ satisfying $t_1<t_0<t_2$.
\end{lem}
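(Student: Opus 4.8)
The plan is to obtain this lemma as a direct consequence of the zero-number theory of Angenent \cite{Ang}, so that the real work reduces to matching hypotheses and disposing of the inhomogeneous boundary data. The first observation is that the regularity imposed on the coefficients here, namely $a,1/a,a_t,a_x,a_{xx},b,b_t,b_x,c\in L^\infty$, is precisely the set of conditions under which the local structure results for zero sets in \cite{Ang} apply to the operator $Lu:=au_{xx}+bu_x+cu$: $a$ is uniformly parabolic with the stated two-sided bound, is $C^{1,1}$ in $x$ and Lipschitz in $t$, while $b$ has bounded first derivatives and $c$ is bounded. Thus no fresh parabolic estimates are needed; one simply quotes \cite{Ang} once the geometry of the domain is brought into standard form.

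The second step is the reduction to homogeneous Dirichlet data. If $l_i$ (for $i=0$ or $i=1$) is never zero on $[0,T]$, then by continuity of $u$ together with compactness of $[0,T]$ there is $\delta>0$ with $u(t,x)\neq 0$ whenever $x$ lies within distance $\delta$ of the corresponding endpoint; hence every zero of $u(t,\cdot)$ stays in a compact subinterval of $(0,1)$ away from that endpoint, and that endpoint contributes nothing to $z(t)$. If instead $l_i\equiv0$, then that endpoint is a zero of $u(t,\cdot)$ for every $t$, counted with its one-sided order, which is exactly the situation handled in \cite{Ang}. Running through the two possibilities at each of the two endpoints places us, for the purpose of counting and tracking zeros, in the framework of Theorems C and D of \cite{Ang} on a suitable closed subinterval carrying at each endpoint either homogeneous Dirichlet data or no relevant boundary. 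The hypothesis $u(0,\cdot)\not\equiv0$ in the case $l_0=l_1\equiv0$ merely excludes the trivial solution $u\equiv0$ (forced by uniqueness otherwise), the only case in which the zero count fails to be finite.

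With these reductions the three conclusions follow from \cite{Ang}. Finiteness of $z(t)$ for each $t\in(0,T]$ and the monotonicity $z(t_2)\le z(t_1)$ for $0<t_1<t_2\le T$ are the content of the zero-number theorems there: for $t>0$ the order of vanishing of $u(t,\cdot)$ at any zero is finite, the zeros are isolated, and no zero can be created as $t$ increases. For the strict drop, I would invoke Angenent's local description near a degenerate zero: if $u(t_0,\cdot)$ vanishes to order $m\ge2$ in $x$ at an interior point $x_0$, or to one-sided order $m\ge2$ at a homogeneous boundary point, then in appropriate parabolic local coordinates $u$ is asymptotic, as $(t,x)\to(t_0,x_0)$, to a nonzero caloric polynomial homogeneous of degree $m$, and such a polynomial has strictly fewer zeros in $x$ for $t>t_0$ than for $t<t_0$ when $m\ge2$; combined with the monotonicity of $z$ this yields $z(t_1)>z(t_2)$ for all $t_1,t_2\in(0,T]$ with $t_1<t_0<t_2$.

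The one point that genuinely needs care --- where the word ``easy'' has to be earned --- is the bookkeeping at the boundary: one must verify that a never-vanishing $l_i$ really produces a zero-free strip uniform in $t$ (this is where compactness of the time interval enters), and that in the homogeneous case the counting convention from \cite{Ang}, which records boundary zeros together with their one-sided order, is the one in force, so that the degenerate-drop statement stays valid when the degenerate zero sits at an endpoint. Neither point is hard, but both deserve to be stated explicitly rather than left to the reader.
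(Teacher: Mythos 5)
Your proposal is correct and matches the paper's approach: the paper offers no proof of this lemma, stating only that it is an easy consequence of the proofs of Theorems C and D of Angenent \cite{Ang}, and your argument supplies exactly the bookkeeping that the paper leaves implicit --- the uniform zero-free strip near a never-vanishing boundary (via compactness of $[0,T]$ and continuity of $u$) to remove that endpoint from the count, the observation that mixed boundary types are covered because Angenent's proofs treat the two endpoints separately, and the appeal to the local structure near a degenerate zero for the strict drop.
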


For convenience of applications later we give a
 variant of Lemma \ref{angenent}.

\begin{lem}\label{zero-number}
 Let $\xi(t)<\eta(t)$ be two continuous functions for $t\in (t_0, t_1)$. If $u(t,x)$ is a continuous function
 for $t\in (t_0, t_1)$ and $x\in [\xi(t),\eta(t)]$, and satisfies
   \eqref{linear} in the classical sense for such $(t,x)$, with
\[
u(t,\xi(t))\not=0,\; u(t, \eta(t))\not=0 \mbox{ for } t\in (t_0, t_1),
\]
then for each $t\in (t_0, t_1)$, the number of zeros of $u(t,\cdot)$  in $[\xi(t), \eta(t)]$ is finite, which we denote by $Z(t)$.
Moreover ${Z}(t)$ is nonincreasing in $t$ for $t\in (t_0, t_1)$, and if for some $s\in (t_0, t_1)$ the function $u(s,\cdot)$ has a degenerate zero $x_0\in
(\xi(t), \eta(t))$, then ${Z}(s_1)>{Z}(s_2)$ for all $s_1, s_2$ satisfying $t_0<s_1<s<s_2<t_1$.
\end{lem}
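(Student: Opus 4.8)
The plan is to reduce Lemma~\ref{zero-number} to Lemma~\ref{angenent} by a purely spatial, time-\emph{independent} rescaling, after first confining all zeros of $u(t,\cdot)$ to a compact interval that stays strictly inside $(\xi(t),\eta(t))$ uniformly for $t$ near a given time. Fix $t^*\in(t_0,t_1)$. Since $u(t^*,\xi(t^*))\ne 0$, $u(t^*,\eta(t^*))\ne 0$ and $u,\xi,\eta$ are (jointly) continuous, a routine compactness argument produces $\sigma_1<t^*<\sigma_2$ in $(t_0,t_1)$ and constants $\rho_1<\rho_2$ with
\[
\max_{[\sigma_1,\sigma_2]}\xi\;<\;\rho_1\;<\;\rho_2\;<\;\min_{[\sigma_1,\sigma_2]}\eta ,
\]
such that $u(t,x)\ne 0$ whenever $t\in[\sigma_1,\sigma_2]$ and $x\in[\xi(t),\rho_1]\cup[\rho_2,\eta(t)]$. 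Indeed, if no such data existed one could extract $t_n\to t^*$ and zeros $x_n$ of $u(t_n,\cdot)$ with $x_n\to\xi(t^*)$ or $x_n\to\eta(t^*)$, and continuity would force $u$ to vanish at $(t^*,\xi(t^*))$ or $(t^*,\eta(t^*))$, a contradiction. Consequently $[\rho_1,\rho_2]\subset[\xi(t),\eta(t)]$ for every $t\in[\sigma_1,\sigma_2]$, and for each such $t$ every zero of $u(t,\cdot)$ in $[\xi(t),\eta(t)]$ lies in the open interval $(\rho_1,\rho_2)$.

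On $[\sigma_1,\sigma_2]\times[\rho_1,\rho_2]$ the function $u$ is a bounded classical solution of \eqref{linear} whose lateral boundary values $u(\cdot,\rho_1)$, $u(\cdot,\rho_2)$ are continuous and never zero. Set $y=(x-\rho_1)/(\rho_2-\rho_1)\in[0,1]$ and $v(t,y):=u\big(t,\rho_1+(\rho_2-\rho_1)y\big)$; then $v$ solves an equation of the form \eqref{linear} on $[\sigma_1,\sigma_2]\times[0,1]$ whose coefficients are those of the original equation multiplied by the constants $(\rho_2-\rho_1)^{-2}$, $(\rho_2-\rho_1)^{-1}$, $1$, so they inherit all the $L^\infty$-bounds required in Lemma~\ref{angenent} (which, as in that lemma, we take for granted for the coefficients of \eqref{linear} on the moving domain). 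After translating in $t$, Lemma~\ref{angenent} applies to $v$: the number of zeros of $v(t,\cdot)$ in $[0,1]$ — which equals the number of zeros of $u(t,\cdot)$ in $[\rho_1,\rho_2]$, hence equals $Z(t)$ — is finite for every $t\in(\sigma_1,\sigma_2]$, is nonincreasing there, and strictly decreases across any time at which $v(t,\cdot)$, equivalently $u(t,\cdot)$, has a degenerate zero in $(0,1)$, equivalently in $(\rho_1,\rho_2)$.

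Since $t^*\in(\sigma_1,\sigma_2)$ was arbitrary, $Z(t)$ is finite throughout $(t_0,t_1)$ and is nonincreasing on each such interval $(\sigma_1,\sigma_2)$; covering an arbitrary $[s,t]\subset(t_0,t_1)$ by finitely many of them and chaining the inequalities shows $Z$ is nonincreasing on all of $(t_0,t_1)$. Finally, if $u(s,\cdot)$ has a degenerate zero $x_0\in(\xi(s),\eta(s))$, run the construction with $t^*=s$; then $x_0\in(\rho_1,\rho_2)$ (a zero of $u(s,\cdot)$ cannot lie at $\rho_1$ or $\rho_2$), so the last part of Lemma~\ref{angenent} gives $Z(s_1)>Z(s_2)$ for $\sigma_1<s_1<s<s_2<\sigma_2$, and monotonicity extends this to all $s_1<s<s_2$ in $(t_0,t_1)$.

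\textbf{Main obstacle.} The only genuinely substantive point is the confinement of the zero set to a \emph{fixed} compact subinterval $[\rho_1,\rho_2]$, obtained above from joint continuity of $u$ together with the sign conditions $u(t,\xi(t))\ne0$, $u(t,\eta(t))\ne0$. This is precisely what permits a time-independent change of variables and thereby preserves the coefficient regularity demanded by Lemma~\ref{angenent}; a true straightening of the moving domain $(\xi(t),\eta(t))\mapsto(0,1)$ would produce coefficients only as regular as $\xi,\eta$, which here are merely continuous, so Lemma~\ref{angenent} could not be invoked. Everything else is bookkeeping.
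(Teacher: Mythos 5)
Your proof is correct and is essentially identical to the paper's: both fix a time $t^*$, use continuity of $u$, $\xi$, $\eta$ and the boundary nonvanishing hypothesis to confine the zero set to a fixed rectangle $[\sigma_1,\sigma_2]\times[\rho_1,\rho_2]$ strictly inside the moving domain, invoke Lemma~\ref{angenent} there (modulo an affine change of the $x$-variable, which the paper leaves implicit), and then chain together such local windows. Your commentary on why a time-dependent straightening of the domain would degrade the coefficient regularity is a fair remark but not an additional idea.
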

\begin{proof}
For any given $t^*\in (t_0, t_1)$, we can find $\epsilon>0$ and $\delta>0$ small such that $u(t,x)\not=0$ for $t\in I_{t^*}:=[t^*-\delta, t^*+\delta]\subset (t_0, t_1)$
and $x\in [\xi(t),\xi(t^*)+\epsilon]\cup [ \eta(t^*)-\epsilon, \eta(t)]$. Hence we may apply Lemma \ref{angenent} with $[0,T]\times [0,1]$ replaced by
$[t^*-\delta, t^*+\delta]\times [\xi(t^*)+\epsilon, \eta(t^*)-\epsilon]$ to see that the conclusions for ${Z}(t)$ hold for $t\in I_{t^*}$. Since any compact subinterval of $(t_0, t_1)$ can be covered by finitely many such $I_{t^*}$, we see that ${Z}(t)$ has the required properties over any compact subinterval of $(t_0, t_1)$. It follows that ${Z}(t)$ has the required properties for $t\in (t_0, t_1)$.
\end{proof}

Next we make use of Lemma \ref{zero-number} and a  result of Fernandez \cite{F} to prove  Theorem \ref{conv}.
We first prove a zero number conclusion.
Let $(u,g, h)$ be a solution of \eqref{p} that is defined for all $t>0$.
Denote $k(t):= \min\{h(t),-g(t)\}$ and
$$
w(t,x) := u(t,x)- u(t,-x), \quad x\in I(t) := [-k(t), k(t)],\ t>0.
$$
Let $\mathcal{Z}(t)$ be the number of zeros of the function $w(t,\cdot)$
in the closed interval $I(t)$. We notice that $w$ satisfies
\[
w_t=w_{xx} +c(t,x) w \mbox{ for } x\in (-k(t), k(t)),\; t>0,
\]
with $c(t,x):=[f(u(t,x))-f(u(t,-x))]/w(t,x)$ when $w(t,x)\not=0$, and $c(t,x)=0$ otherwise.

\begin{lem}\label{lem2.3}
Suppose that $k(t)\not\equiv K(t):=\max\{h(t), -g(t)\}$ for $t\in (0,+\infty)$. Then either
\begin{itemize}
\item[(i)]
  $w(t,x)\equiv 0$ for all large $t$,
or
\item[(ii)] there exists $t_0>0$ such that
 $\mathcal{Z}(t)$ is finite and nonincreasing in $t$ for $t> t_0$, and
 if  $w(s,\cdot)$ has a degenerate zero in the interior of $I(s)$ for some $s>t_0$, then
 $\mathcal{Z}(s_1)>\mathcal{Z}(s_2)$ for any $s_1$ and $s_2$ satisfying $t_0<s_1<s<s_2$.
\end{itemize}

 \end{lem}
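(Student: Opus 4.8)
The plan is to prove the dichotomy by first isolating the degenerate alternative (i), and then, in every other case, reducing the assertion to a \emph{single} application of Lemma~\ref{zero-number} by showing that $\rho(t):=h(t)+g(t)$ (equivalently $K(t)-k(t)$) has a fixed strict sign for all large $t$. Suppose $w(t_1,\cdot)\equiv 0$ on $I(t_1)$ for some $t_1>0$. I first claim $\rho(t_1)=0$: if not, then $\rho\neq 0$ on some $(t_1-\varepsilon,t_1+\varepsilon)$, where $I(t)=[-k(t),k(t)]$ with $k(t)\in\{h(t),-g(t)\}$ and, since $u(t,\cdot)>0$ on $(g(t),h(t))$, one has $|w(t,\pm k(t))|=u(t,\mp k(t))>0$; hence Lemma~\ref{zero-number} applies on $(t_1-\varepsilon,t_1+\varepsilon)$ and forces $\mathcal{Z}(t)<\infty$ there, contradicting $\mathcal{Z}(t_1)=+\infty$. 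So $\rho(t_1)=0$, $I(t_1)=[g(t_1),h(t_1)]$, and $u(t_1,\cdot)$ is even; since $(u(t,-x),-h(t),-g(t))$ solves \eqref{p} with the same data at $t=t_1$, uniqueness gives $w(t,\cdot)\equiv 0$ for all $t\geq t_1$, which is (i). From now on I assume $w(t,\cdot)\not\equiv 0$ for every $t>0$ and aim at (ii).

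The reduction is immediate: on any open interval where $\rho>0$ we have $I(t)=[g(t),-g(t)]$ with $w(t,-g(t))=u(t,-g(t))>0$ and $w(t,g(t))=-u(t,-g(t))<0$; on any open interval where $\rho<0$ we have $I(t)=[-h(t),h(t)]$ with $w(t,h(t))=-u(t,-h(t))<0$ and $w(t,-h(t))=u(t,-h(t))>0$. In either case Lemma~\ref{zero-number} applies, so on such an interval $\mathcal{Z}$ is finite, nonincreasing, and drops strictly at each interior degenerate zero of $w$. Hence it suffices to produce $t_0>0$ with $\rho(t)\neq 0$ for all $t>t_0$; then (ii) follows from one application of Lemma~\ref{zero-number} on $(t_0,\infty)$ (there $\rho$, continuous and nonzero, has fixed sign). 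To obtain such a $t_0$, suppose instead $\{t>0:\rho(t)=0\}$ is unbounded. It cannot contain an interval $(a,b)$ with $b$ large, for then $u$ would be even from some time on and $w\equiv 0$ eventually, against the standing assumption. So for large $t$ the zeros of $\rho$ delimit intervals of strict (alternating or repeated) sign; let $t^{\ast}$ be such a zero, $r:=h(t^{\ast})=-g(t^{\ast})$, so $I(t^{\ast})=[-r,r]$, $w(t^{\ast},\pm r)=0$, and $w(t^{\ast},\cdot)\not\equiv 0$ has only isolated zeros in $(-r,r)$ (a nontrivial parabolic solution vanishes only to finite order at each positive time). From the Stefan conditions and $u(t,h(t))\equiv u(t,g(t))\equiv 0$ one computes $u_{xx}(t,h(t))=h'(t)^{2}/\mu$, $u_{xx}(t,g(t))=g'(t)^{2}/\mu$, and therefore
\[
w_x(t^{\ast},\pm r)=-\rho'(t^{\ast})/\mu,\qquad w_{xx}(t^{\ast},r)=\big(h'(t^{\ast})^{2}-g'(t^{\ast})^{2}\big)/\mu.
\]
If $\rho$ changes sign at $t^{\ast}$ (so $\rho'(t^{\ast})\neq 0$), the boundary values $w(t,\pm k(t))$ flip sign across $t^{\ast}$; comparing with the interior count on a fixed subinterval $[-r+\delta,r-\delta]$ (where Lemma~\ref{angenent} applies and the count is nonincreasing across $t^{\ast}$) shows one zero of $w$ is absorbed into each of the two symmetric free boundaries, whence $\lim_{t\uparrow t^{\ast}}\mathcal{Z}(t)\geq\lim_{t\downarrow t^{\ast}}\mathcal{Z}(t)+2$. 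If $\rho$ only touches $0$ at $t^{\ast}$ then $\rho'(t^{\ast})=0$, so $w_x(t^{\ast},\pm r)=0$ and, since $h'(t^{\ast})=-g'(t^{\ast})$, also $w_{xx}(t^{\ast},r)=0$: thus $w(t^{\ast},\cdot)$ has a multiple zero at each free-boundary point, which for $t$ slightly past $t^{\ast}$ lies in the interior of $I(t)$, and feeding this degeneracy into Lemma~\ref{angenent} for times immediately after $t^{\ast}$ again forces $\mathcal{Z}$ to decrease strictly across $t^{\ast}$. Since on each of these intervals $\mathcal{Z}$ is a finite positive integer bounded above by its value on the first such interval, and $\mathcal{Z}\geq 1$ (because $w(t,0)\equiv 0$), only finitely many zeros of $\rho$ can occur at large $t$ --- a contradiction. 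Hence $\rho$ is eventually of one strict sign, and (ii) holds.

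The step I expect to be hardest is the zero-counting as $t$ crosses a zero of $\rho$, and above all the degenerate (``touching'') case: there the naive ``one zero absorbed at each free boundary'' bookkeeping no longer applies and one must exploit the high-order vanishing of $w$ at the free-boundary point --- which itself rests on the Stefan relation $u_{xx}(t,h(t))=h'(t)^{2}/\mu$ and its counterpart at $g(t)$ --- to still extract a strict drop of $\mathcal{Z}$ from Lemma~\ref{angenent}. Two further technical points need care along the way: that the zeros of $\rho$ do not accumulate at a finite time, and that the zeros of $w(t^{\ast},\cdot)$ do not accumulate at a free boundary; both follow from the same zero-number estimates together with the finite vanishing order of nontrivial parabolic solutions.
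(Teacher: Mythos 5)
Your overall plan -- control $\rho(t)=h(t)+g(t)$, show $\mathcal Z$ drops by at least $2$ across each zero of $\rho$, and conclude $\rho$ has finitely many zeros -- is the same strategy the paper pursues, there packaged as Lemma~2.4 and the iteration in the proof of Lemma~2.3. However, there are two genuine gaps in your execution.

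First, you assert that if $\rho\equiv 0$ on an interval $(a,b)$ then ``$u$ would be even from some time on and $w\equiv 0$ eventually.'' This does not follow: $h(t)=-g(t)$ on an interval is only a constraint on the free boundaries, not on $u$ itself, and there is no uniqueness argument that converts it into evenness of $u(a,\cdot)$. (What one can actually extract from $\rho\equiv 0$ on $(a,b)$ is the lateral Cauchy data $w(t,h(t))=w_x(t,h(t))=0$, and then a unique-continuation argument gives $w\equiv 0$; but you do not make that argument.) The paper avoids this issue entirely: Claim~3 of Lemma~2.4 shows, via a comparison argument plus the Hopf lemma applied to $w$, that right after a zero of $K-k$ one again has $k(t)<K(t)$ strictly -- so the possibility of $\rho\equiv 0$ on an interval never needs to be confronted.

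Second, and more seriously, the drop $\lim_{t\uparrow t^\ast}\mathcal Z(t)\ge \lim_{t\downarrow t^\ast}\mathcal Z(t)+2$ is the crux, and your derivation of it is a heuristic rather than a proof. In the ``crossing'' case the phrase ``one zero of $w$ is absorbed into each of the two symmetric free boundaries'' is exactly what must be established, not assumed; the paper does this carefully in Claims~1--4 of Lemma~2.4, by showing that the nondegenerate zero curves of $w$ on $(t_1,s_1]$ have limits as $t\searrow t_1$ which are distinct and lie strictly in the open interval $(-k(t_1),k(t_1))$, hence number at most $n-2$. Your computation of $u_{xx}(t,h(t))=h'(t)^2/\mu$ and the resulting $w_{xx}(t^\ast,r)=0$ in the touching case is a nice observation, but it does not by itself feed into Lemma~\ref{angenent}: the degeneracy of $w(t^\ast,\cdot)$ at $x=r$ occurs on the boundary of $I(t^\ast)$, not in the interior, and there is no reason it persists for $t>t^\ast$, so no drop is guaranteed by the quoted lemma. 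Similarly, the non-accumulation of zeros of $\rho$ at a finite time is stated to ``follow from the same zero-number estimates,'' but this is precisely what the paper's one-crossing-at-a-time structure (each zero $t_j$ is followed by an interval $(t_j,s_j]$ on which $k<K$, forcing the next zero to satisfy $t_{j+1}\ge s_j$) is designed to deliver, and your proposal does not establish it independently.
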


Lemma 2.3 will follow from Lemma 2.2 and the following result.

\begin{lem}\label{lem2.4}
Suppose that $0<t_0<t_1<+\infty$ and
\[
k(t)<K(t) \mbox{ for } t\in [t_0,t_1),\; k(t_1)=K(t_1).
\]
Then either 
\begin{itemize}
\item[(i)] $k(t)\equiv K(t)$ and $w(t,x)\equiv 0$ for $t\geq t_1$,
or
\item[(ii)] there exists $s_0\in (t_0, t_1)$ and $s_1>t_1$ such that  $k(t)<K(t)$ for $t\in(t_1, s_1]$, and
$\mathcal{Z}(t)$ has the properties described in Lemma 2.3 case {\rm (ii)} for $t\in (t_0, s_1]$, with
\begin{equation}
\label{t1}
\mathcal{Z}(t)\equiv \mathcal{Z}(s_0)\geq \mathcal{Z}(t_1) \mbox{ for } t\in [s_0,t_1),\; 
\mathcal{Z}(t)\equiv \mathcal{Z}(s_1)\leq \mathcal{Z}(t_1)-2 \mbox{ for } t\in (t_1, s_1].
\end{equation}
\end{itemize}
\end{lem}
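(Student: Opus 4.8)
\smallskip
\noindent\textbf{Proof proposal.}
First I would normalize the configuration. Since $w(t,x)=u(t,x)-u(t,-x)$ is odd (so $w(t,0)=0$) and replacing $u(t,x)$ by $u(t,-x)$ only changes the sign of $w$ while swapping $h$ and $-g$, I may assume $k(t)=h(t)<-g(t)=K(t)$ on $[t_0,t_1)$. Then $x=k(t)$ is a free boundary while $x=-k(t)$ is interior to $(g(t),h(t))$, so $u>0$ there gives $w(t,k(t))=-u(t,-h(t))<0$ and $w(t,-k(t))>0$; in particular $w(t,\cdot)\not\equiv 0$, Lemma \ref{zero-number} applies to $w$ on $I(t)$ over $(t_0,t_1)$, and because $w$ is odd with nonzero endpoint values its zeros are $x=0$ together with finitely many mirror pairs, so $\mathcal Z(t)$ is \emph{odd} there (and on any later interval where $k<K$). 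Write $\ell:=h(t_1)=-g(t_1)$, so $I(t_1)=[-\ell,\ell]$ is the full domain at $t_1$ and $w(t_1,\pm\ell)=0$. If $w(t_1,\cdot)\equiv 0$, then $u(t_1,\cdot)$ is even, and $u(t,-x)$ together with $(g,h)$ solves \eqref{p} on $[t_1,\infty)$ with the same data at $t_1$; by uniqueness $u(t,-x)\equiv u(t,x)$, i.e.\ $k\equiv K$ and $w\equiv 0$ for $t\ge t_1$. This is alternative (i).

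So suppose $w(t_1,\cdot)\not\equiv 0$. I would first note that $k(t)<K(t)$ for $t$ just above $t_1$: if $h\equiv -g$ on some $[t_1,t_1+\eta]$ the reflection/uniqueness argument just used forces $w\equiv 0$ there, hence $w(t_1,\cdot)\equiv 0$, a contradiction; an accumulation of crossing times at $t_1^+$ is excluded a posteriori by the drop proved below. Fixing such an $s_1>t_1$, the function $\mathcal Z$ is finite, nonincreasing and integer valued on each of $(t_0,t_1)$ and $(t_1,s_1]$, hence locally constant near $t_1^-$ and near $t_1^+$; shrinking $s_1$ and choosing $s_0\in(t_0,t_1)$ I may take $\mathcal Z\equiv m:=\mathcal Z(t_1^-)$ on $[s_0,t_1)$ and $\mathcal Z\equiv m':=\mathcal Z(t_1^+)$ on $(t_1,s_1]$. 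The heart of the matter is then to prove $m\ge \mathcal Z(t_1)\ge m'+2$ (this also yields $\mathcal Z(t_1)<\infty$ and rules out the accumulation above).

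For $m\ge \mathcal Z(t_1)$ I would show every zero of $w(t_1,\cdot)$ in $[-\ell,\ell]$ is a limit, as $t\uparrow t_1$, of a zero of $w(t,\cdot)$: non-degenerate interior zeros by the intermediate value theorem and joint continuity of $w$; a degenerate interior zero $z_0$ by applying Lemma \ref{zero-number} on a tiny fixed interval about $z_0$ (so $z_0$ is interior to $I(t)$ near $t_1$) and noting the count there is strictly larger before $t_1$; and the endpoints $\pm\ell$ because $w(t,h(t))<0$ for $t<t_1$ while $w(t_1,\cdot)$ is one-signed on a one-sided neighbourhood of $\ell$. Surjectivity of this ``limit'' map gives $m=\mathcal Z(t_1^-)\ge \mathcal Z(t_1)$. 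For $\mathcal Z(t_1)\ge m'+2$ I would pick $\rho>0$ so small that $[-\ell+\rho,\ell-\rho]$ contains all \emph{interior} zeros of $w(t_1,\cdot)$ with $w(t_1,\pm(\ell-\rho))\ne 0$; Lemma \ref{zero-number} on this fixed interval gives a nonincreasing $\mathcal Z_\rho$ with $\mathcal Z_\rho(t_1)=\mathcal Z(t_1)-2$, and it remains to check that the two ``ears'' $[-k(t),-\ell+\rho)\cup(\ell-\rho,k(t)]$ are zero free for $t$ just above $t_1$; then $\mathcal Z(t)=\mathcal Z_\rho(t)$ and $m'=\mathcal Z_\rho(t_1^+)\le \mathcal Z_\rho(t_1)=\mathcal Z(t_1)-2$. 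For the ears, zeros can only cluster at $\pm\ell$; using the free-boundary data $u_x(t,h(t))=-h'(t)/\mu$, $u_x(t,g(t))=-g'(t)/\mu$ and, from the equation and $f(0)=0$, $u_{xx}(t,h(t))=h'(t)^2/\mu>0$, $u_{xx}(t,g(t))=g'(t)^2/\mu>0$, a Taylor expansion of $w=u(t,x)-u(t,-x)$ at $x=\ell$ shows that near $\ell$ the sign of $w(t,\cdot)$ for $t>t_1$ is governed by $(h+g)'(t_1)$; when $h$ crosses $-g$ transversally at $t_1$ (so $(h+g)'(t_1)>0$) one finds $w(t,\cdot)$ one-signed on the whole ear, and likewise on the left, as required.

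The step I expect to be the main obstacle is precisely the ear estimate in the \emph{non-transversal} sub-case $(h+g)'(t_1)=0$, which does occur (e.g.\ when $h<-g$ on \emph{both} sides of $t_1$, so $h+g$ has an interior maximum there): then $w_x(t_1,\pm\ell)=-(h+g)'(t_1)/\mu=0$, the zeros of $w(t_1,\cdot)$ at $\pm\ell$ are degenerate, the leading term of the expansion of $w(t,\ell)$ vanishes, and I would have to descend to higher order, exploiting $(h+g)''(t_1)\le 0$ together with finer regularity of $u$ at the free boundary, to still conclude that no zero of $w(t,\cdot)$ enters the ears as $t\downarrow t_1$ and, symmetrically, that $\pm\ell$ are limits of zeros as $t\uparrow t_1$. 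Obtaining this boundary-layer control on $w$ near $\pm\ell$, uniformly in $t$ near $t_1$, is where the real work lies.
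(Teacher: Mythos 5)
Your overall strategy --- bound $\mathcal Z(t_1^-)\ge\mathcal Z(t_1)\ge\mathcal Z(t_1^+)+2$ by tracking zeros through $t=t_1$ and then excising ``ears'' near $\pm k(t_1)$ --- is the same as the paper's, and the $[-\ell+\rho,\ell-\rho]$ cutoff for the post-$t_1$ count mirrors the paper's $\tilde k(t)$ device. However, there are two genuine gaps, and both are precisely what the paper's Claim~3 (a comparison-principle-plus-Hopf argument) is there to handle.

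First, the step ``if $h\equiv -g$ on some $[t_1,t_1+\eta]$, the reflection/uniqueness argument just used forces $w\equiv 0$'' is unsound. Uniqueness for \eqref{p} compares two solutions with the \emph{same} data at $t_1$; here $u(t_1,\cdot)$ and its reflection differ by hypothesis, so $(u,g,h)$ and $(u(t,-x),-h,-g)$ are two \emph{different} solutions and uniqueness gives nothing, even if their free boundaries happened to coincide on an interval. The ``excluded a posteriori by the drop'' remark is circular, since the drop is what you are trying to prove. Something more is required to show $k<K$ persists on $(t_1,t_1+\epsilon]$.

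Second --- the obstacle you flag yourself --- the Taylor-expansion ear estimate only treats the transversal sub-case $(h+g)'(t_1)\neq 0$. But that sub-case never occurs here. With your normalization, $h+g<0$ on $[t_0,t_1)$ and $=0$ at $t_1$, so $(h+g)'(t_1^-)\ge 0$; and the comparison argument below shows $h+g\le 0$ on a right-neighbourhood of $t_1$ as well, forcing $(h+g)'(t_1)=0$. So the case you identify as ``the main obstacle'' is in fact the \emph{only} case, and your proposal contains no argument for it. (The Taylor route also presumes boundary regularity of $u$ beyond the $C^{1+\alpha/2,2+\alpha}$ that is available a priori.)

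The paper resolves both at once (its Claim~3). Let $z^*$ lie between the outermost interior zero of $w(t_1,\cdot)$ and $k(t_1)$, with, say, $w(t_1,z^*)>0$. Compare $(u,g,h)$ with $(v,-h,-g)$, $v(t,x):=u(t,-x)$, on the ear: $v\le u$ on $[z^*,k(t_1))$ at $t=t_1$, $v<u$ at $x=z^*$ for $t\in[t_1,t_1+\epsilon]$, and the two domains coincide at $t_1$. The comparison principle for \eqref{p} then gives $-g(t)\le h(t)$ and $v\le u$ on the ear for $t\in(t_1,t_1+\epsilon]$, the strong maximum principle upgrades the latter to strict inequality in the interior, and if ever $-g(t^*)=h(t^*)$ with $t^*\in(t_1,t_1+\epsilon]$, Hopf applied to $w=u-v$ at that corner yields $w_x(t^*,k(t^*))<0$, i.e.\ $h'(t^*)>-g'(t^*)$, contradicting that $h+g\ge 0$ attains its minimum at $t^*$ on $(t_1,t^*]$. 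This simultaneously (i) proves $k<K$ strictly past $t_1$ (no crossing, no accumulation), (ii) gives the zero-free ear without any transversality hypothesis, and (iii) uses only $C^1$ regularity of the free boundaries.

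A smaller omission: your surjectivity argument for $m\ge\mathcal Z(t_1)$ tacitly assumes $w(t_1,\cdot)$ has isolated zeros, so that it is one-signed on a one-sided neighbourhood of each $\pm\ell$. In the non-transversal case $\pm\ell$ are degenerate zeros of $w(t_1,\cdot)$, so this is not automatic. The paper obtains it by first showing that the nondegenerate zero curves $\gamma_i(t)$ from $[s_0,t_1)$ have limits as $t\uparrow t_1$: if one oscillated, $w(t_1,\cdot)$ would vanish on an interval, and Fernandez's unique continuation theorem \cite{F} then forces $w(t_1,\cdot)\equiv 0$ on all of $I(t_1)$, contradicting the standing assumption that alternative (i) fails. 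This unique-continuation ingredient is absent from your proposal and is needed to make the endpoint bookkeeping watertight.
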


\begin{proof} Suppose that alternative (i) does not happen. We show that the conclusions in (ii) hold.

By Lemma 2.2,  $\mathcal{Z}(t)$ has the properties described in case (ii) of Lemma 2.3 for $t\in (t_0, t_1)$; namely it is finite and nonincreasing for
 $t\in(t_0,t_1)$, and each time a degenerate zero appears for $w(t,\cdot)$ the value of
 $\mathcal{Z}(t)$ is decreased by at least 1. These facts imply that in the interval $(t_0, t_1)$ there can exist at most finitely many values of $t$
 such that $w(t,\cdot)$ has a degenerate zero. Thus we can find $s_0\in (t_0, t_1)$ such that for $t\in [s_0, t_1)$, $w(t,\cdot)$ has only nondegenerate zeros in $I(t)$.
 Clearly $w(t,0)=0$ so $x=0$ is always a zero of $w(t,\cdot)$. Due to the non-degeneracy, the zeros of $w(t,\cdot)$, with $t\in [s_0, t_1)$, can be expressed as smooth curves:
 \[
 \mbox{$x=\gamma_1(t),..., x=\gamma_m(t)$, with $-k(t)<\gamma_i(t)<\gamma_{i+1}(t)<k(t)$ for $i=1,..., m-1$.}
 \]

 For each $i\in\{1,..., m\}$, we now examine the limit of $\gamma_i(t)$ as $t\nearrow t_1$. Clearly
 \[
 \mbox{ $x_j:=\liminf_{t\nearrow t_1}\gamma_i(t)\geq -k(t_1)$ and $x_j^*:=\limsup_{t\nearrow t_1}\gamma_i(t)\leq k(t_1)$.}
  \]
  If $x_j<x_j^*$, then it is easily seen that $w(t_1, x)\equiv 0$ for $x\in [x_j, x_j^*]$.
  We may now apply Theorem 2 of \cite{F} to $w$ over the region $[t_1-\epsilon, t_1]\times [-k(t_1-\epsilon), k(t_1-\epsilon)]$,
  with $\epsilon>0$ sufficiently small, to conclude that $w(t_1, x)\equiv 0$ for $x\in  [-k(t_1-\epsilon), k(t_1-\epsilon)]$.
  Letting $\epsilon\to 0$ we deduce $w(t_1, x)\equiv 0$ for $x\in [-k(t_1), k(t_1)]$. This implies that $u(t_1, x)$ is even in $x$. Since $g(t_1)=-h(t_1)$ and
  $u(t_1, g(t_1))=u(t_1, h(t_1))=0$, by the uniqueness of the solution to the  free boundary problem \eqref{p} (with initial time $t_1$) we deduce that $u(t,\cdot)$ is even, and $g(t)=-h(t)$ for all $t\geq t_1$.
  But this contradicts our assumption that case (i) does not happen.
   Therefore $x_j:=\lim_{t\nearrow t_1}\gamma_j(t)$ exists for every $j\in \{1,..., m\}$.

  \noindent
  {\bf Claim 1:} $x_1=-k(t_1)$ and $x_m=k(t_1)$.

  We only prove $x_m=k(t_1)$; the proof for $x_1=-k(t_1)$ is done similarly. Arguing indirectly we assume that $x_m<k(t_1)$. Then in the
  region $A_m:=\{(t,x): \gamma_m(t)<x<k(t), s_0<t\leq t_1\}$, by the maximum principle, we have $w(t,x)>0$. Since $w(t_1, k(t_1))=0$, we can apply the Hopf boundary lemma (see, e.g., Lemma 2.6 of \cite{L}) to deduce that
  $w_x(t_1, k(t_1))<0$. It follows that $u_x(t_1, k(t_1))+u_x(t_1, -k(t_1))<0$. Since $k(t_1)=h(t_1)=-g(t_1)$, we thus obtain
  \[
  h'(t_1)=-\mu u_x(t_1, h(t_1))>\mu u_x(t_1, g(t_1))=-g'(t_1).
  \]
  On the other hand, from $-g(t)<h(t)$ for $t\in [s_0, t_1)$ and $h(t_1)=-g(t_1)$ we deduce $h'(t_1)\leq -g'(t_1)$. This contradiction completes our proof of Claim 1.

  \noindent
  {\bf Claim 2:} If $x_i<x_{i+1}$, then $w(t_1,x)\not=0$ for $x\in (x_i, x_{i+1})$.

  This follows directly from the strong maximum principle applied to the region $A_i:=\{(t,x): \gamma_i(t)<x<\gamma_{i+1}(t),\; s_0\leq t\leq t_1\}$.

  \smallskip

  From Claims 1 and 2, we immediately see that $\mathcal{Z}(t_1)\leq m=\mathcal{Z}(t)$ for $t\in [s_0, t_1)$. Let $-k(t_1)=z_1<z_2<...<z_n=k(t_1)$ denote all the zeros of $w(t_1,\cdot)$ in $I(t_1)$ (with $n\leq m$).
  \smallskip

  \noindent
  {\bf Claim 3:} Denote $z^*=(z_{n-1}+z_n)/2$. There exists $\epsilon>0$ such that
  $w(t,x)\not=0$ for $t\in (t_1, t_1+\epsilon)$ and $x\in [z^*, k(t)]$.

  Clearly $w(t_1, z^*)\not=0$. For definiteness, we assume that $w(t_1, z^*)>0$. Hence by continuity there exists $\epsilon>0$ such that
  $w(t, z^*)>0$ for $t\in[t_1, t_1+\epsilon]$. We now consider $u(t,x)$ and $v(t,x):=u(t,-x)$. Since $u(t_1, x)>v(t_1, x)$ for $x\in [z^*, k(t_1))$, and
  $u(t,z^*)>v(t, z^*)$ for $t\in [t_1, t_1+\epsilon]$, and $h(t_1)=-g(t_1)$, we find that the comparison principle
   (see, e.g. Lemma 2.2 of \cite{DLou}) can be used to deduce that $-g(t)\leq h(t)$ for $t\in (t_1, t_1+\epsilon]$ and $v(t,x)\leq u(t,x)$ for $t\in (t_1, t_1+\epsilon]$ and $x\in [z^*, -g(t)]$. We may use the strong maximum principle to deduce that $v(t,x)<u(t,x)$ for $t\in (t_1, t_1+\epsilon)$ and $x\in [z^*, -g(t))$.
  We can further show that $-g(t)<h(t)$ for $t\in (t_1, t_1+\epsilon]$, since if $-g(t^*)=h(t^*)=x^*$ for some $t^*\in (t_1, t_1+\epsilon]$, then
  necessarily $w(t^*,x^*)=0$ and we can apply the Hopf lemma to deduce $w_x(t^*,x^*)<0$, which implies $-g'(t^*)<h'(t^*)$, a contradiction.
  Thus we have $k(t)=-g(t)$ for $t\in (t_1, t_1+\epsilon)$ and $w(t,k(t))>0$ for such $t$. Hence
  $w(t,x)>0$ in $\{(t,x): z^*\leq x\leq k(t), t_1\leq t\leq t_1+\epsilon\}\setminus\{(t_1, k(t_1))\}$.

  \smallskip

  \noindent
  {\bf Claim 4:} There exists $s_1>t_1$ such that $\mathcal{Z}(t_1)-2\geq p:=\mathcal{Z}(t)$ for $t\in (t_1, s_1]$.

  By Claim 3 we have $w(t,-k(t))=-w(t, k(t))\not=0$ for $t\in (t_1, t_1+\epsilon]$. Moreover, we can find $\epsilon_1>0$ very small and
  a continuous function $\tilde k(t)$ defined over $J:=[t_1-\epsilon_1, t_1+\epsilon]$ such that
  \begin{equation}
  \label{tilde-k}
  \tilde k(t)< k(t) \mbox{ and } w(t, \tilde k(t))\not=0 \mbox{ in } J, \; w(t, x)\not=0 \mbox{ for }
  x\in [\tilde k(t), k(t)],\; t\in (t_1, t_1+\epsilon].
  \end{equation}
  Since $\mathcal{Z}(t_1)$ is finite, this allows us to apply Lemma \ref{zero-number}  to conclude that,
   there exists $s_1\in(t_1, t_1+\epsilon]$ such that $w(t,\cdot)$ has no degenerate zeros in $I(t)$ when $t\in (t_1, s_1]$.
  Let $\tilde\gamma_1(t)<\tilde\gamma_2(t)<...<\tilde\gamma_p(t)$ be the nondegenerate zeros of $w(t,\cdot)$ in $I(t)$, with $t\in (t_1, s_1]$.
  Then $x=\tilde \gamma_i(t) \; (i=1,..., p)$ are smooth curves. Moreover, $\tilde z_i:=\lim_{t\searrow t_1}\tilde \gamma_i(t)$ exists for each $i\in\{1,..., p\}$, for otherwise
  $w(t_1,\cdot)$ would be identically zero over some interval of $x$, contradicting to what is known about $w(t_1,\cdot)$.
   Furthermore, $\tilde z_i<\tilde z_{i+1}$ for $i\in\{1,..., p-1\}$, since otherwise,
  we may apply the maximum principle over the region $\tilde A_i:=\{(t,x): \tilde \gamma_i(t)<x<\tilde\gamma_{i+1}(t),\; t_1\leq t\leq s_1\}$ to deduce that $w\equiv 0$ in $\tilde A_i$. Finally from \eqref{tilde-k} we know that none of these curves $\{(t,\tilde \gamma_i(t))\}\subset \R^2$  can connect to the point $(t_1,-k(t_1))$ or $(t_1, k(t_1))$.
  Thus $\tilde z_1<\tilde z_2<...<\tilde z_p$ are different zeros of $w(t_1, \cdot)$ in $I(t_1)\setminus\{-k(t_1), k(t_1)\}$. It follows immediately that $p\leq n-2$. Claim 4 is proved.

We have now proved \eqref{t1}, which shows that $\mathcal{Z}(t)$ has the properties described in Lemma 2.3 case (ii) for $t\in [s_0, s_1]$. We already know that $\mathcal{Z}(t)$ has these properties for $t\in (t_0, t_1)$. Therefore it has these properties for all $t\in (t_0, s_1]$.
\end{proof}

 \begin{proof}[Proof of Lemma 2.3]
 Since $k(t)\not\equiv K(t)$ in $(0,\infty)$, we can find $t_0>0$ such that $k(t_0)<K(t_0)$.
 Therefore there exists $t_1\in (t_0,+\infty]$ such that $k(t)<K(t)$ for $t\in [t_0, t_1)$, and $k(t_1)=K(t_1)$ when $t_1$ is finite.
Without loss of generality we assume that
$k(t_0)=-g(t_0)$ and $K(t_0)=h(t_0)$. Then necessarily $k(t)=-g(t)<h(t)=K(t)$ for all $t\in [t_0, t_1)$. It follows that
$w(t, -k(t))<0<w(t, k(t))$ for $t\in [t_0, t_1)$. Hence we can apply Lemma \ref{zero-number} to see that $\mathcal{Z}(t)$
has the required properties for $t\in (t_0, t_1)$.

Suppose that case (i) does not happen. We prove that (ii) holds.
If $t_1=+\infty$, then the proof is complete.
Suppose next that $t_1<+\infty$. By Lemma 2.4 there exists $s_1>t_1$ such that $k(t)<K(t)$ for $t\in (t_1, s_1]$ and $\mathcal{Z}(t)$ has the required properties for $t\in (t_0, s_1]$, with 
\[
\mbox{ $\mathcal{Z}(t)\equiv \mathcal{Z}(s_1)
\leq \mathcal{Z}(t_1)-2 $ for $t\in (t_1, s_1]$.}
\]

If $t_1$ is the last zero of $K(t)-k(t)$, then $w(t, -k(t))=-w(t, k(t))\not=0$ for $t>t_1$, and we can use Lemma 2.2 to conclude that $\mathcal{Z}(t)$ has the required properties for $t>t_1$. Thus in this case $\mathcal{Z}(t)$ has the required properties for all $t>t_0$, and the proof is complete.

If $t_1$ is not the last zero of $K(t)-k(t)$, then 
  there exists $t_2>s_1$, which is the first zero of $K(t)-k(t)$ after $t_1$. We may now apply Lemma 2.4 with
  $\{t_0, t_1\}$ replaced by $\{s_1, t_2\}$ to conclude that, there exists  $s_2>t_2$ such that $\mathcal{Z}(t)$ has the required properties for $t\in (t_1, s_2]$ with
\[
\mathcal{Z}(t)\equiv \mathcal{Z}(s_2)\leq \mathcal{Z}(t_2)-2 \leq \mathcal{Z}(t_1)-4 \mbox{ for } t\in (t_2, s_2].
\]
Since we already know that $\mathcal{Z}(t)$ has the required properties for $t\in (t_0, s_1]$, we find that $\mathcal{Z}(t)$ has the required properties for all $t\in (t_0, s_2]$.

If $t_2$ is the last zero of $K(t)-k(t)$ then as before we easily see that $\mathcal{Z}(t)$ has the required properties for all $t>t_0$. Otherwise we can repeat the analysis to find $s_3>t_3>s_2$  such that $t_3$ is the first zero of $K(t)-k(t)$ after $t_2$, and $\mathcal{Z}(t)$ has the required properties for $t\in (t_0, s_3]$ with
\[
\mathcal{Z}(t)\equiv \mathcal{Z}(s_3)\leq \mathcal{Z}(t_3)-2 \leq\mathcal{Z}(t_2)-4\leq  \mathcal{Z}(t_1)-6 \mbox{ for } t\in (t_3, s_3].
\]
 Since $\mathcal{Z}(t_1)$ is finite, the above process can continue only finitely many steps, say $K(t)-k(t)$ has consecutive zeros  $t_1<t_2<... <t_k$,  $t_k$ being the last zero of $K(t)-k(t)$, $\mathcal{Z}(t_k)\leq \mathcal{Z}(t_1)-2k$, and $\mathcal{Z}(t)$ has the required properties for all $t>t_0$. The proof is complete.
 \end{proof}

\smallskip

\noindent
{\bf Proof of Theorem \ref{conv}:}
Suppose by way of contradiction that $\lim_{t\to\infty}\gamma(t)$ does not exist. Then
\begin{equation}\label{assu-gamma}
-h_0\leq \liminf\limits_{t\to \infty} \gamma(t)  <\limsup\limits_{t\to \infty} \gamma(t)\leq h_0.
\end{equation}
By standard parabolic regularity we have
$$
\| u(t,\cdot -\gamma(t)) -v(\cdot )\|_{C^2 (J)} \to 0\quad \mbox{as } t\to \infty,
$$
where $J:= [-3h_0, 3h_0]$. So there exists $T_1 >0$ such that for $t>T_1$, $u(t,\cdot)$ has
exactly one maximum point $x(t) \thickapprox -\gamma(t)$ on $J$, and $x(t)$ is a continuous function of  $t$. Fix
\[
x_0\in (-\limsup_{t\to \infty} \gamma(t), -\liminf_{t\to \infty} \gamma(t)).
\]
By our assumption \eqref{assu-gamma},
$x(t)-x_0$ changes sign infinitely many times as $t$ goes to infinity. Therefore there is a sequence $t_k\to+\infty$ such that $x(t_k)=x_0$.

We now define $\tilde u(t,x)=u(t, x_0+x)$, $\tilde g(t)=g(t)-x_0$ and $\tilde h(t)=h(t)-x_0$. By perturbing $x_0$ if necessary we can always guarantee that
\[
\tilde k(t):=\min\{\tilde h(t), -\tilde g(t)\}\not\equiv \tilde K(t):=\max\{\tilde h(t), -\tilde g(t)\}.
\]
We may apply Lemma \ref{lem2.3} to $(\tilde u, \tilde g, \tilde h)$ to obtain the conclusions in case (ii) there for the zero number $\tilde{\mathcal{Z}}(t)$ of
$\tilde w(t,\cdot):=\tilde u(t,\cdot)-\tilde u(t,-\cdot)$ over $\tilde I(t):=[-\tilde k(t), \tilde k(t)]$. Note that due to \eqref{assu-gamma},
case (i) never happens to $\tilde w$.

Now, at each time $t_1>T_1$ such that  $x(t_1)=x_0$, we have
$$
\tilde w(t_1, 0) =\tilde w_x (t_1, 0) =0.
$$
In other words, $0$ is a degenerate zero of $\tilde w(t_1, \cdot)$ in the interior of
$
\tilde I(t_1)$.
 However, the properties of $\tilde{\mathcal{Z}}(t)$ imply that
only finitely many such $t_1$ can exist. This contradiction finishes the proof.
\qed

\section{Transition Speed of the Free Boundary in the Bistable Case}\label{sec:bi}

In this section, we prove Theorem 1.2 for the bistable case.

\begin{thm}\label{thm:propagation-speed-bi}
Assume Theorem {\rm C (iii)} holds and $f$ satisfies \eqref{1+alpha}. Then
$$
-g(t),\ h(t)=\lambda_0\ln t  + O (1) \quad \mbox{with } \lambda_0=[-f'(0)]^{-1/2}.
$$
\end{thm}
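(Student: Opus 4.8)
The plan is to locate each free boundary by trapping $u$, near that boundary, between super- and sub-solutions of the linearization $w_t=w_{xx}+f'(0)w$, whose natural building blocks are the exponentials $e^{\pm\alpha x}$ with $\alpha:=\sqrt{-f'(0)}=\lambda_0^{-1}$. As preliminaries: by Theorem C(iii) and Theorem \ref{conv}, $u(t,\cdot)\to V(\cdot+x_0)$ in $C^2_{loc}(\R)$; writing $f(u)=f'(0)u+R(u)$ with $|R(u)|\le C|u|^{1+\alpha}$ (this is exactly where \eqref{1+alpha} enters), one first records the sharp decay of the ground state, $V(x),\,-V'(x)=(k_0+o(1))e^{-\alpha x}$ as $x\to+\infty$, $k_0>0$, via an ODE comparison for $V''=\alpha^2V-R(V)$. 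The heuristic that fixes the constant: deep in the right tail $u$ is essentially pinned to $k_0e^{-\alpha(x+x_0)}$ and must drop to $0$ at $h(t)$; the only way the free-boundary law $h'=-\mu u_x$ can hold with $h'\to0$ is that the local profile near $h(t)$ degenerates, and matching it to the tail forces $e^{-h(t)/\lambda_0}\sim 1/t$, i.e. $h(t)=\lambda_0\ln t+O(1)$. It suffices to treat $h(t)$; the estimate for $-g(t)$ is identical.

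For the upper bound I would, for a small $K>0$, a fixed $\beta\in(0,\alpha)$ and a correction amplitude $\eta(t)=O(t^{\beta/\alpha-1})$, use on the right boundary layer the barrier
\[
\Psi(t,x)=A\bigl(e^{-\alpha x}-e^{\alpha(x-2\sigma(t))}\bigr)+\eta(t)\,e^{-\beta x},\qquad \sigma(t)=\lambda_0\ln(t/K),
\]
whose first bracket is an exact solution of the linear equation vanishing at $\sigma(t)$ with a favourable $\partial_t$-term (since $\sigma'>0$), and whose correction contributes a strictly positive margin of order $t^{-1}$ on the layer $x\sim\sigma(t)$ — enough to beat the nonlinear remainder $|R(\Psi)|=O(t^{-(1+\alpha)})$ there — while displacing the zero $\bar h(t)$ of $\Psi$ by only $O(1)$ from $\sigma(t)$. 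Away from the layer one splices $\Psi$ to a supersolution dominating $u$ in the bulk (where $u$ is bounded away from $0$ and $f$ need not be negative); the natural candidates are a truncation of $V(\cdot+x_0)$ or the constant $1$, and the splicing is done through a minimum, which is a viscosity supersolution provided each piece is a supersolution where it is active. The free-boundary inequality $\bar h'(t)\ge-\mu\Psi_x(t,\bar h(t))$ reduces, after clearing $t^{-1}$, to an inequality like $\lambda_0\ge\mu(2A\alpha K+\cdots)$, valid for $K$ small; initialising the comparison at a large time $T_0$ with $\bar u(T_0,\cdot)\ge u(T_0,\cdot)$ (from the preliminaries) and applying the comparison principle (Lemma 2.2 of \cite{DLou}) gives $h(t)\le\bar h(t)=\lambda_0\ln t+O(1)$.

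For the lower bound I would mirror this with a sub-solution $\underline u$: on the layer, $\underline\Psi(t,x)=A\bigl(e^{-\alpha x}-e^{\alpha(x-2\rho(t))}\bigr)-\eta(t)e^{-\beta x}$ with $\rho(t)=\lambda_0\ln(Kt)$ and $K$ small, whose subtracted correction now yields a \emph{negative} margin of order $-t^{-1}$ dominating both $|R|=O(t^{-(1+\alpha)})$ and the now wrong-signed but only $O(t^{-2})$ contribution of $\rho'>0$; away from the layer one uses that $u$ stays above a positive constant (so $\underline u$ may be taken to coincide with a small truncation of the ground state, spliced by a maximum), with the outer free boundary of $\underline u$ left free and the inner one fixed (legitimate since $g(t)<0$). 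The free-boundary inequality $\underline h'(t)\le-\mu\underline u_x(t,\underline h(t))$ holds for $K$ small, and matching $\underline u$ below $u$ at a large $T_0$ — arranging $0<\rho(T_0)\le h(T_0)$, consistent since $h(T_0)\to\infty$ — gives, by comparison, $h(t)\ge\underline h(t)=\lambda_0\ln t-O(1)$. Together with the upper bound this proves the theorem.

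The main difficulties, and where I would expect most of the effort to go, are two-fold. First, the moving-zero structure only supplies a margin of size $O(t^{-2})$ near the free boundary, which is too weak against the nonlinear remainder $O(t^{-(1+\alpha)})$ (note $1+\alpha<2$); reconciling this is exactly the role of the correction terms $\pm\eta(t)e^{-\beta x}$ with $\beta<\alpha$, whose exponents and amplitude must be tuned so that the margin becomes $O(t^{-1})$ while the free boundary moves by only $O(1)$ — and this is also why one must work with the exact exponent $\alpha$ rather than a detuned $\alpha\pm\varepsilon$ (the latter only yields $h(t)/\ln t\to\lambda_0$, not the sharp $O(1)$). Second, splicing the boundary-layer barriers to the bulk barriers without losing the super/sub-solution property requires knowing that, for large $t$, $u$ is genuinely trapped near $V(\cdot+x_0)$ not just on bounded sets but out to where the exponential layer takes over — a quantitative strengthening of the $C^2_{loc}$ convergence in Theorem C(iii), which I would obtain from the zero-number machinery of Section 2 or from an auxiliary comparison. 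It is here, rather than in the explicit barrier computations, that the real work lies.
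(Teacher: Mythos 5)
Your plan is correct in broad outline but takes a genuinely different route from the paper. The paper's barriers are built from the ground state $V$ itself, shifted horizontally by $\pm1$ and vertically by $\mp m/t$: for instance the subsolution is $\underline V(t,x)=V(x+x_0+1)-\tfrac{m}{t}$ on $[M_0,\underline h(t)]$ with $\underline h(t)=\xi_m(t)-x_0-1$, where $\xi_m(t)$ is defined implicitly by $V(\xi_m(t))=m/t$. The growth $\xi_m(t)=\lambda_0\ln t+O(1)$ is then read off from the tail asymptotics of $V$, which is exactly where \eqref{1+alpha} enters (finiteness of $A_0=\int_0^{V(0)}\bigl(F(s)^{-1/2}-\lambda_0/s\bigr)\,ds$). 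The key simplification this choice buys: since $V''+f(V)\equiv0$, the full nonlinearity drops out and the residual for $\underline V$ is just $\tfrac{m}{t^2}+f(V)-f\bigl(V-\tfrac{m}{t}\bigr)$, whose second bracket equals $f'(\zeta)\tfrac{m}{t}\le\tfrac12 f'(0)\tfrac{m}{t}<0$ for $V$ small — a ready-made $O(t^{-1})$ margin with the right sign. Your layer barriers, by contrast, solve only the linearization $w_t=w_{xx}+f'(0)w$, so you must manufacture that $O(t^{-1})$ margin by hand via the correction $\eta(t)e^{-\beta x}$ and then tune $\beta$ in the window $\alpha(1-\alpha_H)<\beta<\alpha$ (writing $\alpha_H$ for the H\"older exponent in \eqref{1+alpha}) so that the margin also dominates $|R|$ off the layer; your exponent arithmetic for this closes, but it is work the paper's choice of barrier makes unnecessary. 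The paper does pay a small splicing price of its own — $V(\cdot+x_0-1)+m_1/t$ never vanishes, so its supersolution is capped near $\bar h(t)$ by a linear piece of slope $-\tfrac{3}{\lambda_0}\tfrac{m_1}{t}$, matched so that the one-sided derivatives have the kink of a supersolution — which is morally the same splice you propose, just without the correction term.

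On your second flagged difficulty: you do not actually need to control $u$ out to where the exponential layer lives, nor any quantitative strengthening of Theorem C(iii). As in the paper's Steps 2 and 3, the comparison is initialized at one large time over the \emph{compact} interval $[M^0,h(T_0)]$ (or $[M_0,\underline h(T_4)]$), where $C^1_{loc}$ convergence of $u(t,\cdot)$ to $V(\cdot+x_0)$ already yields the needed strict inequality once the barrier is shifted by $\pm1$; thereafter Lemma 2.2 of \cite{DLou}, together with the lateral inequality at the single fixed point $x=M^0$ and the free-boundary inequality, propagates the ordering forward in $t$ with no further information about the tail of $u$. So that part of your worry dissolves, and the genuine content is, as you correctly identified, the boundary-layer margin computation.
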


\begin{proof}
We only consider the estimate for $h(t)$, as that for $-g(t)$ is similar.
We will prove the estimate by constructing suitable upper and lower solutions. Let $V(x)$ be as in Theorem C.
Since $V(x)$ is even, and $V'(x)<0$ for $x>0$, and $V(+\infty)=0$, for each $m>0$ and $t>m/V(0)$, there exists a unique
$\xi(t)=\xi_m(t)\in (0, +\infty)$ such that
\[
V(\xi(t))=m/t.
\]
Moreover, $\xi(t)$ is a $C^2$ function satisfying $ \xi'(t)>0$ and $\xi(+\infty)=+\infty$.

For clarity we divide the proof below into three steps.

\noindent
{\bf Step 1:} {\it We show the following asymptotic behavior of $\xi(t)$: As $t\to\infty$,}
\begin{equation}
\label{xi}
\xi'(t)=\frac{\lambda_0}{t}[1+o(1)],\;\;
\xi(t)=\lambda_0\ln t+O(1).
\end{equation}

 Multiplying $V''+f(V)=0$ by $V'$ and integrating in $(-\infty, x)$ we obtain
 \[
 \mbox{ $V'^{2}(x)=F(V(x))$, where
$F(u):=-2\int^{u}_{0}f(s)ds$.}
\]
 Therefore
\begin{align*}
V'(\xi(t))
&=-\sqrt{F\left(\frac{m}{t}\right)}=-\sqrt{\frac{1}{2}F''(0)\left(\frac{m}{t}\right)^2+o\left[\left(\frac{m}{t}\right)^2\right]}\\
&=-[-f'(0)]^{1/2}[1+o(1)]\frac{m}{t} \mbox{ as } t\to\infty.
\end{align*}
Differentiating $V(\xi(t))=\frac{m}{t}$ with respect to $t$ we deduce
\begin{align*}
\xi'(t)=-\frac{m}{t^{2}}[V'(\xi(t))]^{-1}
= \frac{1}{ t}[1+o(1)] [-f'(0)]^{-1/2}
=\frac{\lambda_0}{t}[1+o(1)] \mbox{ as } t\to \infty.
\end{align*}
This proves the first part of \eqref{xi}. To show the second part, we use $V'(x)^2=F(V(x))$ again to obtain
\begin{align*}
 x&=\int^{V(0)}_{V(x)} \frac{ ds}{\sqrt{F(s)}}=-\lambda_0 \ln \frac{V(x)}{V(0)}
 + \int^{V(0)}_{V(x)}\Big(\frac{1}{\sqrt{F(s)}}-\frac{\lambda_0}{ s} \Big){ds}\\
 &=-\lambda_0\ln\frac{V(x)}{V(0)}+A_0+o(1) \mbox{ as } x\to+\infty,
 \end{align*}
where
\[
A_0=\int_0^{V(0)}\Big(\frac{1}{\sqrt{F(s)}}-\frac{\lambda_0}{ s} \Big){ds} \mbox{ is finite due to \eqref{1+alpha}}.
\]
It follows that
\[
V(x)=A e^{-\lambda_0^{-1}x}[1+o(1)] \mbox{ as $x\to+\infty$},
\mbox{ with } A=V(0)e^{\lambda_0^{-1}A_0}.
\]
Therefore
\[
\xi(t)=
 V^{-1}\left(\frac{m}{t}\right)=\lambda_0 \ln t-\lambda_0\ln \frac{m}{A}+o(1) \mbox{ as $t\rightarrow +\infty$,}
 \]
which implies the second part of \eqref{xi}.
\smallskip

\noindent
{\bf Step 2:} {\it We obtain a lower bound for $h(t)$.}

By Theorem \ref{conv}, there exists $x_0\in [-h_0,h_0]$ such that
\[
\lim_{t\to\infty} u(t,x)=V(x+x_0) \mbox{ locally uniformly  in } x\in\R^1.
\]
We now define $(\underline{V},\underline{h})$ by
$$
\begin{cases}
\underline{h}(t)=\xi(t)-x_0-1,& t>m/V(0),\\
\underline{V}(t,x)=V(x+x_0 +1)-\frac{m}{t}, & x\in (0,\underline{h}(t)),t>m/V(0).
\end{cases}
$$
Clearly $\underline V(t, \underline h(t))=V(\xi(t))-\frac{m}{t}=0$.
We will show that by choosing $m>0$, $M_0>0$ and $T>0$ properly, $(\underline{V},\underline{h})$ satisfies
\[
\underline h(t)\leq h(t+T) \mbox{ and }
\underline V(t,x)\leq u(t+T, x) \mbox{ for } M_0\leq x\leq \underline h(t) \mbox{ and all large } t.
\]

Since $f$ is a bistable nonlinearity, there exists $\rho\in (0,\theta)$ such that
$f(s)<0$ and $f'(s)<\frac{1}{2}f'(0)$ for $s\in (0, \rho)$. Choose $T_1>0$
such that for $t>T_1$, $\underline{h}(t)> M_0:= V^{-1} (\rho) -x_0 -1$ and hence
$V(\underline{h}(t) +x_0 +1) <\rho$.
Then for $x\in [M_0,\underline h(t)]$ and $t>T_1$, we have
\[
\mbox{ $\underline{h}(t)> M_0$, $V(x+x_0+1) \leq \rho$,
$f(V(x+x_0+1))-f(\underline V(t,x))<\frac{1}{2}f'(0)\frac{m}{t}$.}
\]
We next show that for sufficiently large $t$ and $x\in [M_{0},\underline{h}(t))$,
\begin{equation}\label{sub1}
\underline{V}_{t}-\underline{V}_{xx}-f(\underline{V})\leq 0,
\end{equation}
\begin{equation}\label{sub2}
\underline{h}'(t)\leq -\mu\underline{V}_{x}(t,\underline{h}(t)).
\end{equation}
Indeed, for $t>T_2:= \max\{-\frac{2}{f'(0)}, T_1\}$ and $x\in[M_{0},\underline{h}(t))$, we have, with $V=V(x+x_0+1)$,
$$
\underline{V}_{t}-\underline{V}_{xx}-f(\underline{V})=\frac{m}{t^2}+f(V)-f(\underline V)
< \frac{m}{t^{2}}+\frac{1}{2}f'(0)\frac{m}{t}  <0.
$$
This proves \eqref{sub1}.

We now prove \eqref{sub2}.
By our estimates in Step 1, clearly
\begin{align*}
\underline{V}_{x}(t,\underline{h}(t))&=V'(\xi(t))\\
&=-[-f'(0)]^{1/2}[1+o(1)]\frac{m}{t} \mbox{ as } t\to\infty,
\end{align*}
and
\[
\underline{h}'(t)=\xi'(t)= \frac{\lambda_0}{ t}[1+o(1)] \mbox{ as } t\to \infty.
\]
 Thus \eqref{sub2} holds for all large $t$, say $t>T_3 \geq T_2$, provided that $m$ is chosen such that
 \[
 \mu[-f'(0)]^{1/2} m>\lambda_0, \mbox{ i.e., }
 m>\lambda_0^2/\mu.
 \]

We fix $m$ as above, and now compare $\underline V(t, M_0)$ with $u(t, M_0)$. Clearly
\[
\lim_{t\to\infty} u(t, M_0)=V(M_0+x_0)>V(M_0+x_0+1)=\lim_{t\to\infty} \underline V(t, M_0).
\]
Therefore we can find a time $T_4>T_3$ such that
\begin{equation}
\label{sub3}
\underline{V}(t+T_4, M_0) < u(s+T_4, M_0)\ \ \mbox{ for all } t, s >0.
\end{equation}
Moreover, since $\underline V(T_4,x)=V(x+x_0+1)-\frac{m}{T_4}<V(x+x_0)-\frac{m}{T_4}$, and
since
\[
\lim_{t\to\infty} h(t)=+\infty,\; \lim_{t\to\infty} u(t,x)= V(x+x_0) \mbox{ locally uniformly  for $x\in \R^1$},
\]
 there exists $T_5>T_4$ such that
\begin{equation}\label{sub4}
\underline{h}(T_4) <  h(T_5), \quad \underline{V}(T_4,x)\leq u(T_5,x) \ \mbox{ for } x\in [M_0, \underline{h}(T_4)].
\end{equation}
Combining \eqref{sub1}, \eqref{sub2}, \eqref{sub3} and \eqref{sub4}, we see that upon using the comparison principle,
for $x\in [M_0, \underline{h}(t+T_4)]$ and $t>0$, we have
\[
h(t+T_5)\geq \underline{h}(t+T_4),\; u(t+T_5, x)\geq \underline V(t+T_4, x).
\]
In view of \eqref{xi}, the above inequality for $h(t+T_5)$ clearly implies that
\[
h(t)\geq \lambda_0\ln t-M_1 \mbox{ for all large $t$ and some $M_1>0$.}
\]

\smallskip

\noindent
{\bf Step 3:} {\it We obtain an upper bound for $h(t)$.}

To complete the proof, it remains to show an estimate of the form
\begin{equation}
\label{h-ubd}
h(t)\leq \lambda_0\ln t+M_2 \mbox{ for all large $t$ and some $M_2>0$.}
\end{equation}
We will accomplish this by constructing suitable upper solutions.

Define $(\bar{V},\bar{h})$ by
$$
\begin{cases}
\bar h(t)=\xi_{m_1}(t)+\frac{2}{3}\lambda_0-x_0+1,& t>m_1/V(0),\\
\bar{V}(t,x)=V(x+x_0-1)+\frac{m_1}{t}, & x\in[0,\bar{h}(t)-\frac{2}{3}\lambda_0],\ t>m_1/V(0),\\
\bar{V}(t,x)=\frac{3}{\lambda_0}\frac{m_1}{t}(\bar{h}(t)-x),& x\in[\bar{h}(t)-\frac{2}{3}\lambda_0,\bar{h}(t)], \ t>m_1/V(0),\\
\end{cases}$$
where $m_1:=\frac{\lambda_0^2}{4\mu}$.
We are going to show that there exists $M^0>0, T_7> T_6>0$ such that
\begin{equation}\label{sup1}
\bar{V}_{t}-\bar{V}_{xx}-f(\bar{V})\geq 0\quad  \mbox{ for } x\in \Big[ M^0,\bar{h}(t)-\frac{2}{3}\lambda_0 \Big),\
t>T_7,
\end{equation}
\begin{equation}\label{sup2}
\bar{V}_{t}-\bar{V}_{xx}-f(\bar{V})\geq 0\quad  \mbox{ for }  x\in \Big( \bar{h}(t)-\frac{2}{3}\lambda_0,
\bar{h}(t) \Big),\ t>T_7,
\end{equation}
\begin{equation}\label{sup3}
\lim_{x\rightarrow [\bar{h}(t)-\frac{2}{3}\lambda_0]^{+}} \bar{V}_{x}(t,x)
\leq \lim_{x\rightarrow [\bar{h}(t)-\frac{2}{3}\lambda_0]^{-}} \bar{V}_{x}(t,x)\quad \mbox{ for } t>T_7,
\end{equation}
\begin{equation}\label{sup4}
\bar{h}'(t)\geq -\mu \bar{V}_{x}(t,\bar{h}(t)) \mbox{ and } \bar V(t,\bar h(t))=0 \quad \mbox{ for } t>T_7,
\end{equation}
\begin{equation}
\label{sup5}
\bar V(t+T_7, M^0)\geq u(t+T_6, M^0) \mbox{ for } t\geq 0,
\end{equation}
\begin{equation}
\label{sup6}
 \bar h(T_7)>h(T_6) \mbox{ and } \bar V(T_7,x)\geq u(T_6,x) \mbox{ for } x\in [M^0, h(T_6)].
\end{equation}
If \eqref{sup1} through to \eqref{sup6} hold, then we can apply the comparison principle to deduce that
\[
\bar h(t+T_7)\geq h(t+T_6),\; \bar V(t+T_7,x)\geq u(t+T_6,x) \mbox{ for $x\in [M^0, h(t+T_6)]$ and $t>0$.}
\]
Applying \eqref{xi} to $\xi_{m_1}(t)$ and using the definition of $\bar h$, we immediately obtain \eqref{h-ubd}, as wanted.

Next we prove \eqref{sup1}-\eqref{sup6} one by one, starting  with \eqref{sup1}.
We choose $M^0 >0$ large so that for all large $t$ and $x\in [M^0, \bar h(t)-\frac23 \lambda_0]$, $V(x+x_0-1)+\frac{m_1}{t}$ is small enough so that, with $V$ standing for $V(x+x_0-1)$,
\[
f(V)-f\left(V+\frac{m_1}{t}\right)>-\frac12 f'(0)\frac{m_1}{t}.
\]
Then for such $t$ and $x$,
\begin{align*}
\bar V_t-\bar V_{xx}-f(\bar V)&=-\frac{m_1}{t^2}+f(V)-f\left(V+\frac{m_1}{t}\right)\\
&\geq -\frac{m_1}{t^2}-\frac12 f'(0)\frac{m_1}{t}>0.
\end{align*}
This proves \eqref{sup1}.

To prove \eqref{sup2}, we note that, for $x\in [\bar h(t)-\frac 23 \lambda_0, \bar h(t)]$ and large $t$,
\[
\bar V_{xx}=0,\; f(\bar V(t,x))\leq 0,
\]
and by direct calculation and \eqref{xi},
\begin{align*}
\bar V_t&=-\frac{3}{\lambda_0}\frac{m_1}{t^2}(\bar h(t)-x)+\frac{3}{\lambda_0}\frac{m_1}{t}\bar h'(t)\\
&\geq -\frac{3}{\lambda_0}\frac{m_1}{t^2}\frac 23 \lambda_0+\frac{3}{\lambda_0}\frac{m_1}{t}\frac{\lambda_0}{t}[1+o(1)]\\
&=-2\frac{m_1}{t^2}+3\frac{m_1}{t^2}[1+o(1)]>0.
\end{align*}
Hence \eqref{sup2} holds.

Clearly \eqref{sup3} follows from
\[
\lim_{x\to [\bar h(t)-\frac23 \lambda_0]^+}\bar V_x(t, x)=-\frac{3}{\lambda_0}\frac{m_1}{t}
\]
and
\[
\lim_{x\to [\bar h(t)-\frac23 \lambda_0]^-}\bar V_x(t, x)=V'(\xi_{m_1}(t))=-\frac{1}{\lambda_0}\frac{m_1}{t}[1+o(1)]
\mbox{ as } t\to\infty.
\]

By definition, $\bar V(t,\bar h(t))=0$.
By direct calculation and the choice of $m_1$ we have
\[
-\mu\bar{V}_{x}(t,\bar{h}(t))=\frac{3\mu}{\lambda_0}\frac{m_1}{t}=\frac 34 \frac{\lambda_0}{t}.
\]
By \eqref{xi},
\[
\bar{h}'(t)=\xi_{m_1}'(t)= \frac{\lambda_0}{t}[1+o(1)] \mbox{ as } t\to \infty.
\]
Therefore  \eqref{sup4} holds for all large $t$.

Finally the inequalities in \eqref{sup5} and \eqref{sup6} are easy consequences of the facts that
\[
\lim_{t\to\infty}\bar h(t)=+\infty,\; \lim_{t\to\infty}\bar V(t,x)=V(x+x_0-1)>V(x+x_0)=\lim_{t\to\infty} u(t,x)
\]
uniformly for $x$ in any compact subset of $[M^0,+\infty)$.

The proof of the theorem is now complete.
\end{proof}

\section{Transition Speed of the Free Boundary in the Combustion Case}\label{sec:com}

In this section we prove Theorem 1.2 for the combustion case. So throughout this section, we always assume that $f$ is of combustion type. Our proof is rather involved.
 For clarity, we divide our analysis into several subsections. 

\subsection{A key lemma}

To stress the dependence of the unique solution $(u(t,x), h(t), g(t))$ of \eqref{p} on the initial function $u_0$, we will write
\[
u(t,x)=u(t,x;u_0),\; h(t)=h(t;u_0),\; g(t)=g(t; u_0).
\]
For convenience, we always think of $u_0(x)$ as defined for all $x\in\R^1$, with value zero outside its supporting set.
Moreover, in this subsection, we drop the assumption that the supporting set of $u_0$ is symmetric about $x=0$, which was assumed in \eqref{p} for convenience.

\begin{lem}\label{lem:finite dif}
Given any two compactly supported initial functions $\{\phi_i\}_{i=1,2}$ of \eqref{p} such that
$$
\lim_{t\to\infty}u(t,x;\phi_i) = \theta \ \mbox{locally uniformly in } \R^1,
$$
 there exists a constant $M>0$ such that
$$
|h(t;\phi_1) -h(t;\phi_2)|\leq M,\quad |g(t;\phi_1) -g(t;\phi_2)|\leq M\quad \mbox{for } t>0.
$$
\end{lem}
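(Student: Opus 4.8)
The plan is to prove the estimate for $h(t;\phi_i)$; the one for $g(t;\phi_i)$ then follows at once by applying it to the reflected data $\widetilde\phi_i(x):=\phi_i(-x)$, since \eqref{p} is invariant under $x\mapsto-x$ and $h(t;\widetilde\phi_i)=-g(t;\phi_i)$. As $\phi_1$ and $\phi_2$ play symmetric roles, it then suffices to produce, for each such pair, a constant $M$ with $h(t;\phi_1)\le h(t;\phi_2)+M$ for all $t>0$. The idea is to trap the solution issuing from $\phi_1$ inside a copy of the solution issuing from $\phi_2$ that has been translated in space and delayed in time, and then to invoke the comparison principle for \eqref{p} (Lemma 2.2 of \cite{DLou}), which for two genuine solutions of the free boundary problem requires only an enclosure at a single time.

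The first ingredient is a shape estimate: in the transition case one has $\limsup_{t\to\infty}\sup_x u(t,x;\phi)\le\theta$. When $\max\phi\le\theta$ this is immediate, since the constant $\theta$ is a stationary solution on all of $\R^1$ dominating the zero-extended $\phi$, whence $u(t,\cdot;\phi)\le\theta$; in general it follows by ruling out a bump of height above $\theta$ escaping to infinity. If $u(t_k,x_k;\phi)\ge\theta+\delta$ with $t_k\to\infty$, then $|x_k|\to\infty$ (since $u(t,\cdot;\phi)\to\theta$ locally uniformly), and by interior parabolic estimates a translate of $u$ subconverges to a bounded entire solution of $U_t=U_{xx}+f(U)$ taking a value $>\theta$; a Liouville-type analysis, using $f>0$ on $(\theta,1)$ and the strong maximum principle, forces $U\equiv 1$ near such a value, so $u(\cdot;\phi)$ would be close to the state $1$ on arbitrarily long intervals, which for combustion $f$ triggers spreading and contradicts that transition occurs. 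Combining this with the fact that $\sup_x u(t,\cdot;\phi)$ cannot return to $\theta$ once it has dropped below it, we may fix a large $T>0$ with $m:=\sup_x u(T,x;\phi_1)<\theta$ and with $u(T,\cdot;\phi_1)$ supported in some bounded interval $[-N_0,N_0]$.

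Next, using that $u(t,\cdot;\phi_2)\to\theta$ locally uniformly and that $-g(t;\phi_2),\,h(t;\phi_2)\to+\infty$, pick $\tau>0$ so large that $u(T+\tau,\cdot;\phi_2)\ge\tfrac12(m+\theta)$ on an interval much longer than $[g(T;\phi_1),h(T;\phi_1)]$, and then a shift $s$ so that at time $T$ the interval $[g(T+\tau;\phi_2)+s,\,h(T+\tau;\phi_2)+s]$ contains $[g(T;\phi_1),h(T;\phi_1)]$ while $u(T,x;\phi_1)\le\tfrac12(m+\theta)\le u(T+\tau,x-s;\phi_2)$ on the latter interval (near the endpoints of the support of $u(T,\cdot;\phi_1)$ this holds because there $u(T,\cdot;\phi_1)$ is small whereas $u(T+\tau,\cdot-s;\phi_2)$ stays above $\tfrac12(m+\theta)$). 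Since $\bar u(t,x):=u(t+\tau,x-s;\phi_2)$ solves \eqref{p} with free boundaries $g(t+\tau;\phi_2)+s$ and $h(t+\tau;\phi_2)+s$, the comparison principle yields $h(t;\phi_1)\le h(t+\tau;\phi_2)+s$ for all $t\ge T$. Finally, $h'(\cdot;\phi_2)$ is bounded on $[0,\infty)$ by the parabolic boundary estimates ($u$ being bounded), so $h(t+\tau;\phi_2)\le h(t;\phi_2)+C\tau$; enlarging $M$ to also absorb $\sup_{0\le t\le T}|h(t;\phi_1)-h(t;\phi_2)|$ finishes the proof.

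The main obstacle is the shape estimate $\limsup_{t\to\infty}\sup_x u(t,x;\phi)\le\theta$ in the transition case: without it one cannot dominate a persistent interior bump of $u(\cdot;\phi_1)$, whose height might lie close to $1$, by the transition-state solution built from $\phi_2$, which only converges to $\theta<1$; once that information is in hand, the rest is merely a choice of the two shifts together with the free-boundary comparison principle. (If \cite{DLou} already records such an a priori bound on the transition-case solution, this step may of course be quoted directly.)
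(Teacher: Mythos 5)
Your strategy---trap $u(\cdot;\phi_1)$ below a spatially shifted, time-delayed copy of $u(\cdot;\phi_2)$ and then invoke the free-boundary comparison principle---is a genuinely different route from the paper's, which instead applies the zero number machinery of Section~2 to the difference $w=u(\cdot;\phi_1)-u(\cdot;\phi_2)$, tracks its single zero curve from the moment the two solutions' supports first overlap, and derives a contradiction (spreading) from the sharp-transition dichotomy whenever that zero hits an endpoint.

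Unfortunately there is a fatal gap in the very first ingredient. You want a time $T$ with $m:=\sup_x u(T,x;\phi_1)<\theta$, so that $u(T,\cdot;\phi_1)\le\tfrac12(m+\theta)<\theta$ can be dominated by $u(T+\tau,\cdot-s;\phi_2)$. But in the combustion transition case no such $T$ exists. Indeed, $f\equiv 0$ on $[0,\theta]$, so if $\sup_x u(t_0,x;\phi_1)\le\theta$ at any time $t_0$, then $u(\cdot;\phi_1)$ subsequently solves the pure Stefan problem (heat equation with the free-boundary conditions); the strong maximum principle gives $u(t,\cdot;\phi_1)<\theta$ for all $t>t_0$, and by the vanishing results in~\cite{DLou} the solution then decays to $0$, contradicting $u(t,\cdot;\phi_1)\to\theta$. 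This is exactly the observation the paper records at the start of Section~4.2 (there in the symmetric-decreasing normalisation: $u(t,0)>\theta$ for all $t\ge0$). Thus $\sup_x u(t,\cdot;\phi_1)>\theta$ for \emph{every} $t$, and your domination $u(T,\cdot;\phi_1)\le\tfrac12(m+\theta)$ fails at the maximum point. Your ``shape estimate'' $\limsup_{t\to\infty}\sup_x u(t,\cdot;\phi)\le\theta$ is still plausible (with equality), but it cannot produce the strict one-time bound your comparison requires: two solutions each with supremum slightly above $\theta$ and different peak locations need not be orderable by any shift-and-delay, and the Liouville argument you sketch does not address this. The paper avoids the difficulty entirely by working with the difference $w$ and controlling its zero set, so that the only alternative to bounded separation of the fronts is that one solution strictly exceeds the other on its whole support at some time, which forces spreading and contradicts transition.
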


\begin{proof}
By Lemma 2.8 of \cite{DLou}, $\frac{1}{2}[g(t;\phi_i)+h(t;\phi_i)]$ is contained in the supporting set of $\phi_i$ for all $t>0$.
Therefore it suffices to show $|h(t;\phi_1) -h(t;\phi_2)|\leq M$ for all $t>0$.
We will only prove
\begin{equation}
\label{4.1}
h(t;\phi_1)-h(t;\phi_2)\leq M,
\end{equation}
 since $h(t;\phi_2)-h(t;\phi_1)\leq M$ can be proved in the same way.

Because both  $\phi_1$ and
$\phi_2$ are compactly supported, by replacing $\phi_1$ with $\phi_1(\cdot+M_1)$ for some large positive constant $M_1$, we may assume that
\begin{equation}
\label{M1}
h(0;\phi_1) < g(0;\phi_2) -1.
\end{equation}
 So the support of $\phi_1$ lies to the left of the support of $\phi_2$ with some positive distance.

 As $t$ is increased to $+\infty$, by assumption $h(t;\phi_1)$ increases to $+\infty$, and $g(t;\phi_2)$ decreases to $-\infty$.
Therefore there exists a unique time $T_1>0$ such that
\[
h(T_1; \phi_1) = g(T_1; \phi_2).
\]
We now consider $h(t; \phi_1)-h(t;\phi_2)$, which is negative when $t=0$ due to \eqref{M1}. If $h(t; \phi_1)-h(t;\phi_2)<0$ for all $t\geq 0$,
then \eqref{4.1} holds for any $M>0$,  which is what we wanted.

In the following, we consider the remaining case, where
$$
T_2 := \inf \{t>0: h(t;\phi_1) = h(t;\phi_2)\}
$$
is  finite and positive.
Let us also define
$$
T_3 := \inf \{t>0: g(t;\phi_1) = g(t;\phi_2)\},
$$
with the convention that $T_3=+\infty$ if $g(t;\phi_1) - g(t;\phi_2)<0$ for all $t>0$. Note that by
\eqref{M1}, $T_3>0$ whenever it is finite. It is easily seen that $T_2, T_3 >T_1$.

For $t>T_1$, we now define
\[
k_l(t):=\max\big\{g(t;\phi_1), g(t; \phi_2)\big\},\; k_r(t):=\min\big\{h(t;\phi_1), h(t; \phi_2)\big\},
\]
and
\[ w(t,x):=u(t,x; \phi_1)-u(t,x;\phi_2),\; x\in [k_l(t), k_r(t)].
\]
Then similar to the situation in Lemma \ref{lem2.3}, $w$ satisfies
\[
w_t-w_{xx}=c(t,x)w \mbox{ for } x\in [k_l(t), k_r(t)],\; t>T_1,
\]
where $c\in L^\infty$, and
\begin{equation}\label{4.2}
w(t, k_l(t))>0>w(t, k_r(t)) \mbox{ for } T_1<t<T^1:=\min\{T_2, T_3\}.
\end{equation}

Note that $k_l(T_1)=k_r(T_1)=g(T_1;\phi_2)=h(T_1;\phi_1)$, and for $t>T_1$ but very close to $T_1$, $w(t,\cdot)$ has exactly one zero in $[k_l(t),k_r(t)]$, which is nondegenerate.
Indeed, for such $t$ and $x\in [k_l(t), k_r(t)]$, by continuity and  the Hopf boundary lemma,
\[
u_x(t,x;\phi_1)<\frac12 u_x(t,h(t;\phi_1);\phi_1)<0,
\]
and
\[
u_x(t,x;\phi_2)>\frac12 u_x(t,g(t;\phi_2);\phi_2)>0.
\]
Hence $w_x(t,x)<0$.

Let $x=x(t)$ denote this unique zero for $t>T_1$ but close to $T_1$.  In view of this fact and \eqref{4.2},
we can apply Lemma 2.2 to conclude that for every $t\in (T_1, T^1)$, $w(t,x)$ has at most one zero in $(k_l(t), k_r(t))$. On the other hand, \eqref{4.2} implies that for every such $t$, $w(t,x)$ has at least one zero.
Therefore there is a unique zero, and by Lemma 2.2, it must be nondegenerate. Thus the unique zero $x(t)$
is defined for all $t\in (T_1, T^1)$, and due to its nondegeneracy, $x(t)$ is a $C^1$ function.

We now consider the limit of $x(t)$ as $t$ increases to $T^1$. If it does not exist, then
 as in  the proof of Lemma 2.4 we deduce that 
 \begin{equation}
\label{4.3}
 \mbox{$w(T^1,\cdot)\equiv 0$ in $[k_l(T^1), k_r(T^1)]$;}
  \end{equation}
  if  $\lim_{t\nearrow T^1}x(t)=x(T^1)$ exists, we can also argue as in the proof of Lemma 2.4
to see that $x(T^1)\in\{k_l(T^1), k_r(T^1)\}$, and $w(T^1,x)$ does not change sign in $(k_l(T^1), k_r(T^1))$.
   
   In the former case, by the uniqueness of the solution to the free boundary problem, we must have
\[
\big(u(t,x;\phi_1), g(t; \phi_1), h(t;\phi_1)\big)\equiv \big(u(t,x;\phi_2),g(t;\phi_2), h(t;\phi_2)\big)\;\;\;\;\;\;
\mbox{ for } t>T^1,
\]
which clearly implies \eqref{4.1}. 

In the latter case we show that a contradiction arises, and so this case cannot occur.
Indeed, we have
\[\mbox{
$[k_l(T^1), k_r(T^1)]=[g(T^1;\phi_2), h(T^1;\phi_2)]$ when $x(T^1)=k_r(T^1)$,}
\]
 and
\[\mbox{
$[k_l(T^1), k_r(T^1)]=[g(T^1;\phi_1), h(T^1;\phi_1)]$ when $x(T^1)=k_l(T^1)$.}
\]
Therefore, when $x(T^1)=k_r(T^1)$, we have
\[
u(T^1,x;\phi_1)>u(T^1,x;\phi_2) \mbox{ for } x\in (g(T^1;\phi_2), h(T^1,\phi_2)).
\]
By the comparison principle (see Lemma 2.1 of \cite{DLou}) and the strong maximum principle we deduce that, for  $t>T^1$,
$g(t; \phi_1)<g(t;\phi_2)$, $h(t;\phi_1)>h(t;\phi_2)$ and
\[
u(t,x;\phi_1)>u(t,x;\phi_2) \mbox{ for $g(t;\phi_2)\leq x\leq h(t;\phi_2)$}.
\]
Hence for fixed $t_0>T^1$,
there exists $\epsilon>0$ such that
\[
u(t_0,x;\phi_1)\geq (1+\epsilon)u(t_0,x;\phi_2) \mbox{ for } x\in [g(t_0;\phi_2), h(t_0;\phi_2)].
\]
This implies, by the sharp transition result of \cite{DLou} and the comparison principle, 
\[
\lim_{t\to\infty} u(t,x;\phi_1)=1.
\]
On the other hand, by the choice of $\phi_1$, we have
\[
u(t,x;\phi_1)\to \theta \mbox{ as } t\to\infty.
\]
When $x(T^1)=k_l(T^1)$, we can derive a contradiction similarly. 
The proof of the lemma is complete.
\end{proof}

By Lemma \ref{lem:finite dif}, to prove Theorem 1.2 for the combustion case, it suffices to consider a
special initial function. This will be crucial to our analysis. More precisely, suppose that transition happens with the initial function $u_0\in\mathscr{X}(h_0)$.  We choose a function $\tilde u_0\in\mathscr{X}(h_0)$ with the properties
$$
\tilde u_0(x)\geq u_0(x) \mbox{ in } [-h_0,h_0],\;\; \tilde u_0(x)=\tilde u_0(-x) \mbox{ and } \tilde u_0'(x)<0 \ \ \mbox{for}\ \ 0<x<h_0.
$$
Then there exists $\sigma^*\in (0,+\infty)$ such that
transition happens for the solution of \eqref{p} with initial function $\phi:=\sigma^* \tilde u_0$.
By uniqueness and a simple reflection-comparison argument, such a solution satisfies
\begin{equation}\label{special-u}
u(t,x)=u(t,-x),\quad u_x(t,x)<0\ \ \mbox{for}\ \ 0<x<h(t), \, t\geq 0.
\end{equation}
Hence $g(t)=-h(t)$, and we only need to show that 
\[
h(t)=2\xi_0\sqrt{t}[1+o(1)] \mbox{ as } t\to\infty.
\]

\subsection{Estimate of $h(t)$  under an extra condition}

As explained above, by Lemma 4.1 we only need to consider the transition case
with a special initial function such that the solution $u(t,x)$ satisfies \eqref{special-u}.

We first observe that $u(t, 0)>\theta$ for all $t\geq 0$. Otherwise there exists $t_0\geq 0$ such that $u(t_0,0)
\leq \theta$ and hence $u(t_0,x)<\theta$ for $x\in [-h(t_0), h(t_0)]\setminus\{0\}$. By the strong maximum principle we easily deduce $u(t,x)<\theta$ for $t>t_0$ and $x\in [-h(t),h(t)]$, which implies $u(t,x)\to 0$ as $t\to\infty$ (see \cite{DLou}), contradicting the assumption that transition happens.

This observation and \eqref{special-u} indicate that for each $t\geq 0$, there is a unique $\theta(t)\in (0, h(t))$ such that
\[
u(t,\theta(t))=\theta.
\]
We will prove that
\begin{equation}\label{assump}
\lim_{t\to\infty}\theta(t)/h(t)=0.
\end{equation}

Assuming \eqref{assump}, we now prove the required estimate for $h(t)$, namely

\begin{prop}\label{prop:propagation-speed-com}
Let $u(t,x)$, $h(t)$ and $\theta(t)$ be as above. Suppose that \eqref{assump} holds.
Then
\begin{equation}\label{4.7}
h(t)= 2\xi_0 \sqrt{t} \ [1+o(1)] \quad \mbox{as } t\to \infty,
\end{equation}
where $\xi_0$ is  defined by \eqref{def-xi_0}.
\end{prop}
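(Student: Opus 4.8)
The plan is to compare $u(t,x)$ with solutions of the pure heat equation on expanding domains, exploiting the fact that $f(u)\equiv 0$ on $[0,\theta]$ together with the assumption \eqref{assump} that the "hot region" $\{u>\theta\}$ occupies a vanishingly small fraction of $(-h(t),h(t))$. The reference model is the classical one-phase Stefan problem: the function $\Phi(x,t)=\theta\big(1-\operatorname{erf}(x/(2\sqrt t))/\operatorname{erf}(\xi_0)\big)$ solves the heat equation on $x>0$ with $\Phi(0,t)=\theta$, vanishes on the free boundary $x=2\xi_0\sqrt t$, and — because of the normalization \eqref{def-xi_0} — satisfies exactly the Stefan condition $\frac{d}{dt}(2\xi_0\sqrt t)=-\mu\,\Phi_x(2\xi_0\sqrt t,t)$. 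So $2\xi_0\sqrt t$ is the self-similar free boundary speed one should see once the temperature at the inner edge has stabilized at $\theta$.

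First I would establish the \textbf{lower bound} $h(t)\ge 2\xi_0\sqrt t\,[1-o(1)]$. Fix a small $\eta>0$. By \eqref{assump} and the fact that $u(t,\theta(t))=\theta$, for $t\ge T_\eta$ we have $\theta(t)\le \eta h(t)$ and, on the interval $[\theta(t),h(t)]$, $u$ solves the pure heat equation $u_t=u_{xx}$ with $u(t,\theta(t))=\theta$, $u(t,h(t))=0$, and Stefan condition at $h(t)$. I would build a lower solution of Stefan type on a shifted/rescaled half-line: take $\underline h(t)=2\underline\xi\sqrt{t-T_\eta+\tau}-a$ with $\underline\xi$ slightly below $\xi_0$ (so that the Stefan inequality goes the right way, using monotonicity of $\xi\mapsto 2\xi e^{\xi^2}\int_0^\xi e^{-s^2}ds$), and $\underline V(t,x)$ the corresponding error-function profile with inner value $\theta-\delta$, matched so that $\underline V\le u$ at the inner boundary (possible since $u(t,\theta(t))=\theta>\theta-\delta$) and at the initial time $t=T_\eta$ after choosing $\tau$ large and $a$ large. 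The comparison principle for the free boundary problem (Lemma 2.1 of \cite{DLou}) then gives $h(t)\ge\underline h(t)$, and letting $\delta,\underline\xi\nearrow$ their limits and $\eta\searrow0$ yields $\liminf_{t\to\infty} h(t)/(2\sqrt t)\ge\xi_0$.

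For the \textbf{upper bound} $h(t)\le 2\xi_0\sqrt t\,[1+o(1)]$ I would argue similarly but from above: on $[\theta(t),h(t)]$ we again have the heat equation, $u(t,\theta(t))=\theta$, and since $\theta(t)=o(h(t))=o(\sqrt t)$, the inner boundary is asymptotically negligible on the $\sqrt t$ scale. Construct an upper solution $\overline V(t,x)$ of Stefan type with free boundary $\overline h(t)=2\overline\xi\sqrt{t+\tau}+b$, $\overline\xi$ slightly above $\xi_0$, inner value $\theta$ (or $\theta+\delta$ to absorb error terms near $x=\theta(t)$, using that $u$ is bounded and $\theta(t)$ small), arranged so $\overline V\ge u$ at the inner edge $x=\theta(t)$ for all large $t$ — here one uses $u(t,\theta(t))=\theta$ and, for $x$ just inside, a crude bound on $u$ coming from boundedness and the fact that $\{u>\theta+\delta\}$ also has width $o(\sqrt t)$ by a Harnack/interior-estimate argument — and $\overline V\ge u$ at an initial time. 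Then $h(t)\le\overline h(t)$, and sending $\delta\searrow0$, $\overline\xi\searrow\xi_0$ finishes the proof.

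The \textbf{main obstacle} is handling the inner boundary $x=\theta(t)$ rigorously in the upper-bound construction: the comparison domain for $\overline V$ should be a clean half-line $x>c(t)$ with $c(t)=o(\sqrt t)$, but $u$ near $x=\theta(t)$ is only controlled by $u(t,\theta(t))=\theta$ and needs a uniform bound of the form $u(t,x)\le\theta+\omega(t)$ with $\omega(t)\to0$ for $x\ge(1+o(1))\theta(t)$ — equivalently, quantitative control that the super-level set $\{u(t,\cdot)\ge\theta+\delta\}$ has width $o(\sqrt t)$, which does not follow from \eqref{assump} alone and must be extracted separately (e.g. from parabolic estimates together with $h(t)=o(t)$, which already follows from Theorem E-type bounds since spreading does not occur, plus the structure of $f$). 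Once that uniform closeness of $u$ to $\theta$ on the bulk region is in hand, the two Stefan comparisons are routine and the self-similar constant $\xi_0$ from \eqref{def-xi_0} drops out exactly.
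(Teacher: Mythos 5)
Your overall plan is the right one — compare with the self-similar solution $(\Phi,\rho)$ of the one-phase Stefan problem, use \eqref{assump} to shrink the influence of the region $\{u>\theta\}$, and recover $\xi_0$ in the limit. But the ``main obstacle'' you flag in the upper bound is real for your particular set-up, and the paper's proof is worth noting because it sidesteps that obstacle entirely. You propose to put the inner comparison boundary essentially at $x=\theta(t)$ (where $u=\theta$ exactly) and to use a profile with inner value $\theta+\delta$, which then forces you to prove a quantitative statement of the form $u(t,x)\le\theta+\omega(t)$ with $\omega(t)\to0$ just outside the hot core — something that indeed does not follow from \eqref{assump} alone. The paper instead fixes $\epsilon\in(0,1)$, puts the inner boundary at $x=\epsilon\tilde h(t)$ where $\tilde h(t)=(1-\epsilon)^{-1}\rho(t+T)$, and defines $\tilde\eta(t,x)=\Phi(t+T,\,x-\epsilon\tilde h(t))$. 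Then \eqref{assump} gives $\theta(t+\tau_\epsilon)<\epsilon h(t+\tau_\epsilon)<\epsilon\tilde h(t)$ as long as $h$ stays below $\tilde h$, so at the inner boundary one simply has $u<\theta=\tilde\eta$, with \emph{no} need for a modulus $\omega(t)$; the price is an $\epsilon$-factor loss in the speed ($\tilde h'=(1-\epsilon)^{-1}\rho'>-\mu\tilde\eta_x$ at the free boundary), which disappears as $\epsilon\to0$. The contradiction at a first touching time then comes from the Hopf lemma exactly as you intend. So the missing idea is: don't try to bound $u$ away from $\theta+\delta$ in the bulk — instead choose the inner boundary of the comparison region far enough to the right (a fixed fraction $\epsilon$ of $\tilde h(t)$) that $u$ is automatically below $\theta$ there.

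For the lower bound your proposal also over-engineers: you do not need $\underline\xi<\xi_0$, nor an inner value $\theta-\delta$, nor a shift $-a$. Since $u(t,0)>\theta$ for all $t$ and $f(u)\ge0$ once $u<1$ (which holds after some $t_1$), one can compare $u(t_1+\cdot,\cdot)$ directly with the exact $(\Phi,\rho)$ on $\{0<x<\rho(t)\}$: on $x=0$ one has $u>\theta=\Phi$, and $u$ is a supersolution of the heat equation while $\Phi$ is a solution; the Hopf-lemma argument at a first touching time of $h(t_1+t)$ and $\rho(t)$ gives a contradiction, yielding $h(t)>\rho(t-t_1)=2\xi_0\sqrt t+O(t^{-1/2})$ cleanly. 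In short, your strategy is essentially the paper's, but the specific barrier constructions you sketch (inner boundary at $\theta(t)$, slack constants $\underline\xi,\overline\xi$, extra uniform estimates on $u-\theta$) are harder than needed and, in the upper-bound case, leave a gap that you have correctly identified but that the paper avoids by design rather than closes.
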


\begin{proof}

We will prove \eqref{4.7} by some comparison arguments involving the functions $\Phi(t,x)$ and $\rho(t)$ given by
\[
\Phi (t,x) :=  \frac{\theta}{E(\xi_0)} \Big[E(\xi_0) - E\left(\frac{x}{2\sqrt{t}}\right) \Big],\;\;  \rho(t):= 2\xi_0 \sqrt{t},
\]
where $E(x):= \frac{2}{\sqrt{\pi}} \int_0^x e^{-t^2} dt$ and $\xi_0$ is given by \eqref{def-xi_0}.

It is easily seen that
\[
\Phi(t,0)=\theta,\; \Phi(t,\rho(t))=0 \mbox{ and } \Phi(t,x)>0 \mbox{ for $t>0$ and } x\in [0, \rho(t)).
\]
In fact, a direct calculation confirms the well-known fact  that $(\eta, r)=(\Phi(t,x), \rho(t))$ satisfies
\begin{equation}\label{1d Stefan}
\left\{
\begin{array}{ll}
\eta_t -\eta_{xx} =0, & 0<x<r(t),\ t>0,\\
\eta(t,0) =\theta,\ \eta(t,r(t))=0, & t> 0,\\
r'(t)= -\mu \eta_x (t,r(t)), & t> 0.
\end{array}
\right.
\end{equation}

For $\epsilon\in (0,1)$ and $T>0$ to be determined, we define
\[
\tilde h(t)=(1-\epsilon)^{-1}\rho(t+T),\;\tilde \eta(t,x)=\Phi(t+T, x-\epsilon \tilde h(t)) \mbox{ for $t>0$ and $x\in [\epsilon\tilde h(t), \tilde h(t)]$}.
\]
It is clear that 
\[
\tilde \eta(t,\epsilon \tilde h(t))=\theta,\; \tilde \eta (t, \tilde h(t))=0,
\]
and
\[
\tilde h'(t)=(1-\epsilon)^{-1}\rho'(t+T)>\rho'(t+T)=-\mu \Phi_x(t+T, \rho(t+T))=-\mu \tilde \eta_x(t,\tilde h(t)).
\]
By the definition, we find
\[
\tilde \eta(t,x)=\frac{\theta}{E(\xi_0)}\left[E(\xi_0)-E\left(\frac{x}{2\sqrt{t+T}}-\frac{\epsilon\xi_0}{1-\epsilon}\right)\right],
\]
from which it is easily calculated that
\[
\tilde \eta_t-\tilde \eta_{xx}=0.
\]

Next we determine $T$ for any fixed $\epsilon\in (0,1)$. 
By \eqref{assump},  there exists
$\tau_\epsilon >0$ such that
\begin{equation}\label{x< epsilon h}
\theta(t) < \epsilon h(t)\ \ \mbox{for}\ t>\tau_\epsilon.
\end{equation}
We choose $T=T_\epsilon$ such that
\[
h(\tau_\epsilon)=\frac{\epsilon}{1-\epsilon} \rho(T)=\epsilon\tilde h(0).
\]
Then $h(\tau_\epsilon)<\tilde h(0)$ and by continuity we can find $\delta_0>0$ such that
\[
h(\tau_\epsilon+t)< \tilde h(t) \mbox{ for } t\in [0, \delta_0).
\]
We claim that 
\[
h(\tau_\epsilon+t)<\tilde h(t) \mbox{ for all } t\geq 0.
\]
Otherwise we can find $t_0\geq \delta_0$ such that
\[
h(\tau_\epsilon+t)<\tilde h(t) \mbox{ for } t\in [0, t_0),\; h(\tau_\epsilon+t_0)=\tilde h(t_0).
\]
It follows that
\[
h'(\tau_\epsilon+t_0)\geq \tilde h'(t_0).
\]

On the other hand, by \eqref{x< epsilon h},
\[
\theta(\tau_\epsilon+t)<\epsilon h(\tau_\epsilon+t)<\epsilon\tilde h(t) \mbox{ for } t\in [0,t_0),
\]
which implies that $u(\tau_\epsilon+t, \epsilon\tilde h(t))<\theta$ for $t\in [0, t_0)$.
This allows us to compare $u(\tau_\epsilon+t, x)$ with $\tilde \eta(t,x)$ by the comparison principle over the region
$\Omega:=\{(t,x): \epsilon\tilde h(t)< x< h(\tau_\epsilon+t), 0< t\leq t_0\}$ to conclude that
\[
u(\tau_\epsilon+t,x)< \tilde \eta(t,x) \mbox{ in } \Omega.
\]
 By the Hopf boundary lemma we further obtain
\[
u_x(\tau_\epsilon+t_0, h(\tau_\epsilon+t_0))>\tilde \eta_x(t_0, \tilde h(t_0)).
\]
It follows that
\[
h'(\tau_\epsilon+t_0)=-\mu u_x(\tau_\epsilon+t_0, h(\tau_\epsilon+t_0))<-\mu\tilde \eta_x(t_0, \tilde h(t_0))<\tilde h'(t_0).
\]
This contradiction proves our claim.

Thus we have
$$
h(t+\tau_\epsilon) < \tilde{h}(t)=\frac{2\xi_0}{1-\epsilon} \sqrt{t + T_\epsilon}
\quad \mbox{for } t>0.
$$
It follows that
$$
\limsup\limits_{t\to \infty} \frac{h(t)-2\xi_0 \sqrt{t}}{\sqrt{t}} \leq
\lim_{t\to \infty} \frac{\tilde h(t-\tau_\epsilon)-2\xi_0 \sqrt{t}}{\sqrt{t}}=2\xi_0\frac{\epsilon}{1-\epsilon}.
$$
Since $\epsilon\in (0, 1)$ is arbitrary, we obtain
\begin{equation}
\label{h-upper}
\limsup\limits_{t\to \infty} \frac{h(t)-2\xi_0 \sqrt{t}}{\sqrt{t}} \leq 0.
\end{equation}

Next we estimate $h(t)$ from below.
Since  $u(t,0)\to \theta$ as $t\to \infty$, we can find $t_1>0$ such that $u(t_1+t,x)<1$ for all $t\geq 0$ and $x\in [0, h(t_1+t)]$. It follows that $f(u(t_1+t, x))\geq 0$ for such $t$ and $x$. We claim that
$$
\rho(t) < h(t_1+t)\quad
 \mbox{for}\ t>0.
$$
If this is not true, then we can find $\tilde t_0>0$ such that 
\[
\rho(t) < h(t_1+t)\quad
 \mbox{for}\ t\in (0, \tilde t_0),\; \rho(\tilde t_0)=h(t_1+\tilde t_0).
\]
Then we can apply the comparison principle over the region
$\{(t,x): 0<x<\rho(t), 0<t\leq \tilde t_0\}$ to deduce a contradiction in the same way
as we did above over the region $\Omega$. This proves our claim, and so
\[
h(t) > \rho(t-t_1) =2\xi_0 \sqrt{t} + O \Big( \frac{1}{\sqrt{t}}\Big)\quad \mbox{as}\ t\to \infty.
\]
The required estimate \eqref{4.7} is a direct consequence of this fact and \eqref{h-upper}.
\end{proof}

\subsection{Proof of \eqref{assump}.} To complete the proof of Theorem 1.2, it remains to prove \eqref{assump}.
We will do this in several steps.

\subsubsection{\bf Analysis of an ODE problem}

For any $b\in (0,(1-\theta)/2)$, we consider the initial value problem
$$
v''+f(v)=0, \quad v(0)=\theta+b,\ \ v'(0)=0.
$$
Let  $V_b(x)$ denote its unique solution.

\begin{lem}\label{lem:lb-b}
There exist $0<l(b)<L(b)<+\infty$ such that
\begin{itemize}
\item[(i)] $ V_b(l(b))=\theta\quad \mbox{and}\quad V_b(L(b))=0,
$
\item[(ii)]
$
V_b(x)=V_b(-x) \ \mbox{and}\ V'_b(x)<0 \ \mbox{for } x\in (0,L(b)],$
\item[(iii)] $l(b)\to \infty$ if and only if $b\to 0$.
\end{itemize}
\end{lem}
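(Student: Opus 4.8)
The plan is to analyze the ODE $v''+f(v)=0$ via its first integral, exploiting the combustion structure of $f$ (namely $f\equiv 0$ on $[0,\theta]$, $f>0$ on $(\theta,1)$). First I would multiply by $v'$ and integrate from $0$: since $v'(0)=0$, this gives the energy identity $v'(x)^2 = 2[F(\theta+b)-F(v(x))]$ where $F(s):=\int_0^s f(r)\,dr$. Because $\theta+b<1$ and $f>0$ on $(\theta,1)$, we have $F$ strictly increasing on $[\theta,\theta+b]$, so $F(\theta+b)>F(v)$ whenever $v<\theta+b$; this shows $v'(x)\ne 0$ for $x>0$ as long as $v$ stays in $(0,\theta+b)$, and since $v''(0)=-f(\theta+b)<0$ we get $v'(x)<0$ just after $0$, hence $V_b$ is strictly decreasing on the maximal interval where it stays positive. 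This immediately yields the monotonicity in (ii), and evenness follows because the equation is autonomous and $v'(0)=0$, so $x\mapsto V_b(-x)$ solves the same IVP.

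Next I would locate the two distinguished points. On the interval where $v\in(\theta,\theta+b)$, the identity gives $v'(x)=-\sqrt{2[F(\theta+b)-F(v)]}$, and separating variables, $l(b)=\int_\theta^{\theta+b}\frac{ds}{\sqrt{2[F(\theta+b)-F(s)]}}$; I must check this integral is finite (so that $V_b$ actually reaches the level $\theta$ in finite $x$) — near $s=\theta+b$ the integrand behaves like $(F(\theta+b)-s)^{-1/2}$ up to constants since $F'(\theta+b)=f(\theta+b)>0$, which is integrable. Once $v$ reaches $\theta$ at $x=l(b)$, on $[l(b),\cdot)$ we have $v\in[0,\theta]$ where $f\equiv 0$, so $v''=0$, i.e. $v$ is linear with slope $v'(l(b))=-\sqrt{2F(\theta+b)}<0$; hence $v$ decreases linearly and hits $0$ at a finite point $L(b)=l(b)+\frac{\theta}{\sqrt{2F(\theta+b)}}$, giving (i) and completing (ii) since $v'<0$ all the way to $L(b)$.

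For part (iii), I would study the behavior of $l(b)=\int_\theta^{\theta+b}\frac{ds}{\sqrt{2[F(\theta+b)-F(s)]}}$ as $b$ varies. Substituting $s=\theta+b\sigma$, $\sigma\in[0,1]$, gives $l(b)=\int_0^1\frac{b\,d\sigma}{\sqrt{2[F(\theta+b)-F(\theta+b\sigma)]}}$. As $b\to 0$, by \eqref{1+alpha} (or just $C^1$-ness at $\theta$ with $f(\theta)=0$) we expand $F(\theta+t)=F(\theta)+\tfrac12 f'(\theta)t^2+o(t^2)$ if $f'(\theta)>0$, so the integrand behaves like $\frac{b}{\sqrt{f'(\theta)b^2(1-\sigma^2)}}=\frac{1}{\sqrt{f'(\theta)(1-\sigma^2)}}$, giving a finite limit — this would contradict (iii) unless $f'(\theta)=0$. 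The point is that for combustion nonlinearities $f$ vanishes identically on a neighborhood to the left of $\theta$, so all derivatives of $F$ at $\theta$ from the left vanish, forcing $F(\theta+b)-F(\theta+b\sigma)\to 0$ faster than any power, and hence $l(b)\to\infty$ as $b\to 0$; conversely $l(b)$ is continuous and finite for $b>0$ and bounded on any interval bounded away from $0$ (using that $F(\theta+b)-F(\theta+b\sigma)\ge c(b)>0$ there), so $l(b)\to\infty$ forces $b\to 0$. The main obstacle is making the ``$l(b)\to\infty$ as $b\to 0$'' estimate rigorous: one needs a quantitative lower bound on $l(b)$, which I would get by noting that since $f$ is continuous and $f(\theta)=0$, for any $\delta>0$ there is $b_\delta$ with $f(s)\le\delta$ for $s\in[\theta,\theta+b_\delta]$, hence $F(\theta+b)-F(s)\le\delta(\theta+b-s)$ there, so $l(b)\ge\int_\theta^{\theta+b}\frac{ds}{\sqrt{2\delta(\theta+b-s)}}=\sqrt{2b/\delta}$; letting $b\to 0$ along a suitable scale and $\delta\to 0$ appropriately (e.g. pick $\delta=\delta(b)\to 0$ with $b/\delta(b)\to\infty$, possible since $\sup_{[\theta,\theta+b]}f\to 0$) gives $l(b)\to\infty$. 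Finally, I would use the monotonicity/continuity of $b\mapsto l(b)$ to promote this to the stated equivalence.
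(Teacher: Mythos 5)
Your proposal is correct and fills in precisely the phase-plane/first-integral analysis that the paper alludes to (the paper's own proof is just ``simple phase plane analysis; details omitted''). Parts (i) and (ii) via the energy identity $V_b'^2=2[F(\theta+b)-F(V_b)]$ and linearity on $[l(b),L(b)]$ (since $f\equiv 0$ on $[0,\theta]$), and part (iii) via the lower bound $l(b)\ge\sqrt{2b/\delta(b)}$ with $\delta(b)=\sup_{[\theta,\theta+b]}f=o(b)$ (a consequence of $f'(\theta)=0$, which follows from $f\in C^1$ together with $f\equiv 0$ on $[0,\theta]$), are all correct. One small slip in the converse direction of (iii): the claim that $F(\theta+b)-F(\theta+b\sigma)\ge c(b)>0$ on $\sigma\in[0,1]$ fails as $\sigma\to 1$; to see that $l(b)$ stays bounded for $b$ bounded away from $0$, split the integral at $s=\theta+b/2$ and near the upper endpoint use $F(\theta+b)-F(s)\ge \bigl(\min_{[\theta+b/2,\,\theta+b]}f\bigr)(\theta+b-s)$, whose coefficient is bounded below by a positive constant uniformly for $b\in[b_0,(1-\theta)/2]$.
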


\begin{proof} The conclusions follow directly from a simple phase plane analysis. The details are omitted.
\end{proof}

\begin{lem}\label{lemm-key}
$
\lim_{b\to 0}  l(b)/L(b)=\lim_{b\to 0} V_b'(l(b)) =0.
$
\end{lem}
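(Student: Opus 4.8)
The plan is to read both limits off the first integral of the ODE, using crucially that a combustion $f$ vanishes identically on $[0,\theta]$. First I would record the energy identity: multiplying $v''+f(v)=0$ by $v'$ and integrating from $0$, with $v(0)=\theta+b$ and $v'(0)=0$, gives
\[
\tfrac12\,(V_b')^2+F(V_b)=F(\theta+b),\qquad F(u):=\int_0^u f(s)\,ds .
\]
Since $V_b'<0$ on $(0,L(b)]$ by Lemma~\ref{lem:lb-b}(ii), this yields $V_b'=-\sqrt{2[F(\theta+b)-F(V_b)]}$ on that interval.

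Next I would treat the ``linear part'' of the orbit. Because $f\equiv0$ on $[0,\theta]$, also $F\equiv0$ on $[0,\theta]$, in particular $F(\theta)=0$; hence for $x\in[l(b),L(b)]$, where $V_b(x)\in[0,\theta]$, the identity collapses to $V_b'(x)\equiv-\sqrt{2F(\theta+b)}$. So $V_b$ is affine on $[l(b),L(b)]$, and comparing $V_b(l(b))=\theta$ with $V_b(L(b))=0$ gives
\[
L(b)-l(b)=\frac{\theta}{\sqrt{2F(\theta+b)}} .
\]
In particular $V_b'(l(b))=-\sqrt{2F(\theta+b)}$, and since $F$ is continuous with $F(\theta)=0$ we have $F(\theta+b)\to0$ as $b\to0$; hence $V_b'(l(b))\to0$. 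This settles the second limit in the lemma.

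For the ratio, on $[0,l(b)]$ the function $V_b$ decreases from $\theta+b$ to $\theta$ with $V_b'=-\sqrt{2[F(\theta+b)-F(V_b)]}$, so separating variables,
\[
l(b)=\int_\theta^{\theta+b}\frac{dv}{\sqrt{2[F(\theta+b)-F(v)]}}
\]
(the integral converges at $v=\theta+b$ because there $F(\theta+b)-F(v)\sim f(\theta+b)(\theta+b-v)$ with $f(\theta+b)>0$; alternatively it is already known finite by Lemma~\ref{lem:lb-b}). Set $G(b):=F(\theta+b)$; then $G(0)=0$ and $G'(b)=f(\theta+b)$ is nondecreasing for $b\in[0,\delta)$ because $f$ is nondecreasing on $(\theta,\theta+\delta)$, so $G$ is convex on $[0,\delta)$ and $G(bt)\le t\,G(b)$ for $t\in[0,1]$ and small $b>0$. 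Substituting $v=\theta+bt$,
\[
l(b)\sqrt{2G(b)}=b\int_0^1\frac{dt}{\sqrt{1-G(bt)/G(b)}}\le b\int_0^1\frac{dt}{\sqrt{1-t}}=2b .
\]
Combining this with the formula for $L(b)-l(b)$, for all small $b>0$,
\[
\frac{l(b)}{L(b)}=\frac{l(b)\sqrt{2G(b)}}{\,l(b)\sqrt{2G(b)}+\theta\,}\le\frac{2b}{2b+\theta}\ \longrightarrow\ 0\quad(b\to0),
\]
since $s\mapsto s/(s+\theta)$ is increasing on $[0,\infty)$. This proves the first limit, completing the proof.

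I expect the bound $l(b)\sqrt{2F(\theta+b)}\le2b$ to be the only delicate point. A direct estimate of $l(b)$ through the cruder inequality $F(\theta+b)-F(v)\ge f(v)(\theta+b-v)$ produces a factor like $f(\theta+b)/f(\theta+b/2)$, which need not remain bounded for a general $C^1$ combustion $f$ (for instance if $f(\theta+s)$ behaves like $e^{-1/s}$ near $s=0$). The device that fixes this is to rescale $v=\theta+bt$ first and only then use convexity of $G$ to compare $G(bt)$ with the chord $t\,G(b)$, which turns the integral into the harmless $\int_0^1(1-t)^{-1/2}\,dt$.
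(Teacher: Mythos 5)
Your proof is correct and follows essentially the same route as the paper's: the energy identity gives $V_b'(l(b))=-\sqrt{2F(\theta+b)}$ and the separation-of-variables formula for $l(b)$, and the decisive step is exactly the convexity bound $G(bt)\le t\,G(b)$ (equivalently $f$ nondecreasing near $\theta$) that tames the integral and yields $l(b)\sqrt{2G(b)}\le 2b$. The only cosmetic differences are that you deduce $V_b'(l(b))\to 0$ directly from $F(\theta+b)\to 0$ rather than from the product bound $l(b)V_b'(l(b))\ge -2b$ together with $l(b)\to\infty$, and you write $l(b)/L(b)$ exactly rather than bounding it by $2b/\theta$.
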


\begin{proof} Since $V_b(x)$ is a linear function over $[l(b), L(b)]$, we have
\[
\frac{\theta}{L(b)-l(b)}=-V_b'(l(b)) \mbox{ and  hence } 0<\frac{l(b)}{L(b)}<-V_b'(l(b))\frac{l(b)}{\theta}.
\]
Because of $V''_b +f(V_b) =0$ and $V'_b (0)=0$, it follows that
$$
V'_b (l(b)) = -\sqrt{G(b)} \ \quad \mbox{with} \ \quad G(u):= 2\int_0^u f(s+\theta) ds,
$$
and
$$
l(b)= \int_0^b \frac{1}{\sqrt{G(b)- G(s)}} ds.
$$
Therefore
\begin{equation}\label{lb-b}
l(b)V'_b(l(b)) = -\int_0^b \frac{\sqrt{G(b)}}{\sqrt{G(b) -G(s)}} ds
= -b \int_0^1 \frac{\sqrt{G(b)}} {\sqrt{ G(b) - G(br)} } dr.
\end{equation}

Since $f(u)$ is nondecreasing in $u\in (\theta, \theta+\delta)$, for any $0<r<1$ and $0<b<\delta$ we have
\[
G(br) = 2\int_0^{br} f(s+\theta) ds = 2r\int_0^b f(rt+\theta) dt \leq 2r\int_0^b f(t+\theta)dt =rG(b).
\]
Substituting this into \eqref{lb-b} we obtain, for $b\in (0,\delta)$,
$$
0> l(b) V'_b(l(b)) \geq -b \int_0^1 \frac{1}{\sqrt{1-r}} dr = -2b.
$$
It follows that 
\[
 0<\frac{l(b)}{L(b)}<-V_b'(l(b))\frac{l(b)}{\theta}\leq \frac{2b}{\theta }.
\]
 Since $l(b)\to +\infty$ as $b\to 0$,
the lemma is proved.
\end{proof}

\subsubsection{\bf Sign-changing patterns of the function ``$x\mapsto u(t,x)-V_b(x)$''.}

In this step, we classify the sign-changing patterns of the function 
\[
w_b(t,x):=u(t,x)-V_b(x)
\]
 for any fixed $t\geq 1$ and small $b>0$. This will be done by making use of the comparison principle and the zero number argument.

Let us recall that $u(t,x)$ satisfies \eqref{special-u}, and $u(t,x)\to \theta$ in $C^1_{loc}(\R^1)$ as $t\to\infty$.
Moreover, $u(t,0)>\theta$ for all $t\geq 0$.

\begin{lem}\label{t=1}
There exists $\delta_0>0$ small such that for each $b\in (0,\delta_0)$,
\begin{itemize}
\item[(i)]  $w_b(1,0)>0>w_b(1, h(1))$,
\item[(ii)] $w_b(1,x)$ has a unique zero in
$ [0, h(1)]$, and the zero is nondegenerate.
\end{itemize}
\end{lem}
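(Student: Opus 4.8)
The plan is to prove both parts directly, without invoking the zero-number machinery, by exploiting the fact that for small $b$ the profile $V_b$ is confined to the thin horizontal strip $[\theta,\theta+b]$ over the whole fixed interval $[0,h(1)]$, while $u(1,\cdot)$ is a fixed, strictly decreasing function that crosses the level $\theta$ transversally at the interior point $\theta(1)$.

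First I would dispose of part (i) and record the basic bounds. By the observation preceding the lemma, $c:=u(1,0)-\theta>0$, and since $V_b(0)=\theta+b$ we get $w_b(1,0)=c-b>0$ whenever $b<c$. By Lemma~\ref{lem:lb-b}(iii), $l(b)\to\infty$ as $b\to0$, and since $l(b)<L(b)$ this forces $L(b)\to\infty$ as well; hence for all sufficiently small $b$ we have $h(1)<l(b)<L(b)$, so $V_b(h(1))>0$ and $w_b(1,h(1))=-V_b(h(1))<0$. This proves (i) and, with the intermediate value theorem, already gives at least one zero of $w_b(1,\cdot)$ in $[0,h(1)]$. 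Moreover, since $[0,h(1)]\subset[0,l(b)]$, on that interval $\theta\le V_b\le\theta+b$, and from the first integral $V_b'(x)^2=G(b)-G(V_b(x)-\theta)\le G(b)$ (with $G$ as in the proof of Lemma~\ref{lemm-key}) we get $|V_b'(x)|\le\sqrt{G(b)}$, which tends to $0$ as $b\to0$.

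For part (ii) I would then localize the crossing. Since $u(1,0)>\theta>0=u(1,h(1))$ and $u_x(1,\cdot)<0$ on $(0,h(1))$, the point $\theta(1)$ with $u(1,\theta(1))=\theta$ lies in the open interval, and I can fix a compact neighborhood $N=[\theta(1)-\eta,\theta(1)+\eta]\subset(0,h(1))$ on which $u_x(1,\cdot)\le-c_0<0$ for some $c_0>0$; put $c_1:=u(1,\theta(1)-\eta)-\theta>0$. Then choose $\delta_0>0$ small enough that for every $b\in(0,\delta_0)$ the bounds above hold and in addition $b<\min\{c,c_1\}$ and $\sqrt{G(b)}<c_0$. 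A three-region argument now finishes the proof: on $[0,\theta(1)-\eta]$ monotonicity gives $u(1,x)\ge\theta+c_1>\theta+b\ge V_b(x)$, so $w_b(1,\cdot)>0$ there; on $[\theta(1)+\eta,h(1)]$ we have $u(1,x)<\theta\le V_b(x)$, so $w_b(1,\cdot)<0$ there; and on $N$ we have $(w_b)_x(1,\cdot)=u_x(1,\cdot)-V_b'\le-c_0+\sqrt{G(b)}<0$, so $w_b(1,\cdot)$ is strictly decreasing on $N$, positive at the left endpoint and negative at the right endpoint, hence has exactly one zero in $N$, which is nondegenerate because $(w_b)_x(1,\cdot)\ne0$ there. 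Combining the three regions yields the unique nondegenerate zero in $[0,h(1)]$.

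This argument is essentially routine; the one point needing a little care is arranging all the smallness requirements on $b$ to be simultaneously satisfiable, which works precisely because the neighborhood $N$ and the constants $c_0,c_1$ depend only on the fixed profile $u(1,\cdot)$ and not on $b$, and because $l(b)\to\infty$ guarantees that the whole interval $[0,h(1)]$ sits inside $[0,l(b)]$ for small $b$, so that the controlling bounds $\theta\le V_b\le\theta+b$ and $|V_b'|\le\sqrt{G(b)}$ are available. No zero-number argument is needed for this base case at $t=1$; it enters later, when the sign pattern is propagated to times $t>1$.
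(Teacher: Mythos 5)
Your argument is correct and follows essentially the same strategy as the paper: control the sign of $w_b(1,\cdot)$ away from the crossing by comparing $u(1,\cdot)$ against the narrow band $[\theta,\theta+b]$ in which $V_b$ lives on $[0,h(1)]\subset[0,l(b)]$, and obtain uniqueness and nondegeneracy of the zero from strict monotonicity of $w_b(1,\cdot)$ where $u_x(1,\cdot)$ is bounded away from $0$ and $\|V_b'\|_\infty\le\sqrt{G(b)}\to0$. The only cosmetic difference is that you split $[0,h(1)]$ into three regions around $\theta(1)$, whereas the paper uses two, taking the monotone region $[x_a,h(1)]$ large enough to contain the sign change; no zero-number argument is used in either version.
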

\begin{proof}
Fix $a\in (\theta, u(1,0))$. There exists $x_a\in (0, h(1))$ and $\epsilon_a>0$ such that
\[
u(1,x_a)=a,\; u_x(1,x)\leq -\epsilon_a \mbox{ for } x\in [x_a, h(1)].
\]
Since $V_b''(x)\leq 0$ for  $x\in [0, L(b)]$ and $V''_b(x)\equiv 0$ for $x\in [l(b), L(b)]$, we have
\begin{equation}\label{V'}
0\geq V_b'(x)\geq V'(l(b)) \mbox{ for } x\in [0, l(b)],\; V'_b(x)\equiv V'(l(b)) \mbox{ for } x\in [l(b), L(b)].
\end{equation}
Therefore from Lemmas \ref{lem:lb-b} and \ref{lemm-key} we find that
\[
l(b)\to+\infty \mbox{ and } \|V'_b\|_\infty\to 0 \mbox{ as } b\to 0.
\]
Hence we can find $\delta_0\in (0, a-\theta)$ sufficiently small so that, for $b\in (0,\delta_0)$,
\[
l(b)>h(1),\; 
0\geq V_b'(x)>-\epsilon_a \mbox{ for } x\in [0, L(b)].
\]
It follows that, for such $b$, $\frac{d}{dx} w_b(1,x)<0$ for $x\in [x_a, h(1)]$, and
\[
w_b(1, x_a)=a-V_b(x_a)>a-V_b(0)>0,\; w_b(1, h(1))=-V_b(h(1))<0.
\]
Hence $w_b(1,x)$ has a unique zero in $(x_a, h(1))$, and the zero is nondegenerate.

For $x\in [0, x_a]$, we have
\[
w_b(1,x)>u(1,x_a)-V_b(0)=a-V_b(0)>0.
\]
The proof is complete.
\end{proof}

From now on, we always assume that 
\[
b\in (0,\delta_0) \mbox{ with $\delta_0$ given in Lemma \ref{t=1}.}
\]
To simplify notations, we will write $w(t,x)$ instead of $w_b(t,x)$ when the dependence of $b\in (0, \delta_0)$ is not stressed.

Since $t\mapsto w(t,x)$, $t\mapsto w_x(t,x)$ and $t\mapsto h(t)$ are all continuous and uniformly in $x$, from the conclusions of Lemma \ref{t=1} we see that, there exists
$\epsilon_0>0$ small such that for each fixed $t\in [1-\epsilon_0, 1+\epsilon_0]$, $w(t,x)$ has the same properties, namely
\begin{itemize}
\item[(i)]  $w(t,0)>0>w(t, h(t))$,
\item[(ii)] $w(t,x)$ has a unique zero in
$ [0, h(t)]$, and the zero is nondegenerate.
\end{itemize} 

We now define
\[
T_1:=\sup\{s: w(t,0)>0 \mbox{ for } t\in [1-\epsilon_0, s)\},
\]
\[ 
T_2:=\sup\{s: h(t)<L(b) \mbox{ for } t\in [1-\epsilon_0, s)\}.
\]
Clearly $T_1, T_2\geq 1+\epsilon_0$. Since $h(t)\to+\infty$ and $w(t,0)\to -b<0$ as $t\to+\infty$, $T_1$ and $T_2$ are both finite.

\begin{lem}\label{T1<T2}
Suppose  $T_1<T_2$. Then
\begin{itemize}
\item[(i)] for $t\in [1, T_1)$, $w(t,x)$ has a unique nondegenerate zero $x(t)$ in $(0, h(t))$, with sign-changing pattern $[+0-]$ over $[0, h(t)]$, meaning 
\[\mbox{
$w(t,x)>0$ in $[0, x(t))$, $w(t,x(t))=0$, $w(t,x)<0$ in $(x(t), h(t)]$;}\]
\item[(ii)] $w(T_1,x)$ has sign-changing pattern $[0-]$ over $[0, h(T_1)]$, meaning 
\[
\mbox{$w(T_1, 0)=0$ and $w(T_1,x)<0$ in $(0, h(T_1)]$;}\]
\item[(iii)] for $t\in (T_1, T_2)$, $w(t,x)$ has sign-changing pattern $[-]$ over $[0, h(t)]$, meaning 
\[
\mbox{ $w(t,x)<0$ in $[0, h(t)]$;}
\]
\item[(iv)] $w(T_2, x)$ has sign-changing pattern $[-0]$ over $[0, L(b)]$, meaning 
\[\mbox{ $w(T_2,x)<0$ in $[0, h(T_2))$ and $w(T_2, h(T_2))=0$;}
\]
\item[(v)] for $t>T_2$, $w(t,x)$ has a unique nondegenerate zero $y(t)$ in $(0, L(b))$, with sign-changing pattern $[-0+]$ over $[0, L(b)]$, meaning  
\[
\mbox{$w(t,x)<0$ in $[0, y(t))$, $w(t, y(t))=0$, $w(t,x)>0$ in $(y(t), L(b)]$;}\]

\item[(vi)] $\lim_{t\nearrow T_1}x(t)=0$, $\lim_{t\searrow T_2}y(t)=L(b)$.

\end{itemize}

\end{lem}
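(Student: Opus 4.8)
The plan is to run the zero number argument on $w(t,x)=w_b(t,x)=u(t,x)-V_b(x)$, exploiting the evenness in \eqref{special-u}. Since $u$ solves $u_t=u_{xx}+f(u)$ on $(-h(t),h(t))$ and $V_b$ solves $V_b''+f(V_b)=0$ on $(-L(b),L(b))$, their difference satisfies $w_t=w_{xx}+c(t,x)w$ with $c\in L^\infty$, and, $u(t,\cdot)$ and $V_b$ both being even, $w(t,\cdot)$ is even with $w_x(t,0)=0$; I would therefore always view $w(t,\cdot)$ on the symmetric interval $[-h(t),h(t)]$ (while $h(t)<L(b)$) or $[-L(b),L(b)]$ (once $h(t)>L(b)$), and let $Z(t)$ be its number of zeros there. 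The two tools are the moving-boundary zero number lemma (Lemma~\ref{zero-number}) and, to exclude $w(t,\cdot)$ vanishing on a subinterval, Theorem~2 of \cite{F} together with uniqueness for \eqref{p}, exactly as in the proof of Lemma~\ref{lem2.4}. Note that $T_1<T_2$ forces $h(t)<L(b)$ on $[1-\epsilon_0,T_2)$, $h(T_2)=L(b)$, and $h(t)>L(b)$ for $t>T_2$, so $w(t,\pm h(t))=-V_b(h(t))\neq0$ on $[1-\epsilon_0,T_2)$ while $w(t,\pm L(b))=u(t,\pm L(b))>0$ for $t>T_2$.

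First I would treat $t\in[1,T_1)$: from the extension of Lemma~\ref{t=1} we have $Z(t)=2$ near $t=1$, and Lemma~\ref{zero-number} on $[1-\epsilon_0,T_2)\times[-h(t),h(t)]$ makes $Z$ finite and nonincreasing; since $w(t,0)>0$ there, $Z(t)$ is even, and $w(t,0)>0>w(t,h(t))$ forces $Z(t)\ge2$, whence $Z(t)\equiv2$, i.e. a single nondegenerate zero $x(t)\in(0,h(t))$ with pattern $[+0-]$ --- this is (i). At $t=T_1$ one has $w(T_1,0)=w_x(T_1,0)=0$, so $0$ is a degenerate zero and $Z$ strictly decreases; since evenness precludes $Z$ from resting at an odd value, $Z\equiv0$ on $(T_1,T_2)$. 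After ruling out $w(T_1,\cdot)\equiv0$ on a subinterval (which via \cite{F} would give $u(T_1,\cdot)=V_b$, forcing $h(T_1)=L(b)$, impossible since $T_1<T_2$), one gets that $0$ is the only zero of $w(T_1,\cdot)$ in $(-h(T_1),h(T_1))$, so $Z(T_1)=1$ and, since $w(T_1,h(T_1))<0$, $w(T_1,\cdot)<0$ on $(0,h(T_1)]$ --- this is (ii); the same no-coalescence argument gives $\lim_{t\nearrow T_1}x(t)=0$, the first half of (vi). For $t\in(T_1,T_2)$, $Z(t)=0$ and $w(t,h(t))<0$ give $w(t,\cdot)<0$ on $[0,h(t)]$ --- this is (iii).

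Next, letting $t\nearrow T_2$ in (iii) gives $w(T_2,\cdot)\le0$ on $[0,L(b)]$, and the strong maximum principle improves this to $w(T_2,\cdot)<0$ on $[0,L(b))$; since $u(T_2,L(b))=0=V_b(L(b))$ we have $w(T_2,L(b))=0$ --- this is (iv). For $t>T_2$ I would switch to $[-L(b),L(b)]$, where $w(t,\pm L(b))=u(t,\pm L(b))>0$; fixing $\delta>0$, continuity of (iv) gives $w(t,\cdot)<0$ on $[0,L(b)-\delta]$ for $t$ slightly above $T_2$, so applying Lemma~\ref{zero-number} on the fixed interval $[L(b)-\delta,L(b)]$ (endpoint values $<0$ and $>0$) and noting that two zeros merging as $t\searrow T_2$ would make $w(T_2,\cdot)\equiv0$ nearby, I get exactly one nondegenerate zero $y(t)\in(L(b)-\delta,L(b))$ with pattern $[-0+]$ on $[0,L(b)]$ and $y(t)\to L(b)$ --- the second half of (vi), which also shows $Z(t)=2$ just above $T_2$. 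To propagate $Z(t)\equiv2$ to all $t>T_2$ I would first show $w(t,0)<0$ for every $t>T_2$: at a first violation $t_*$, $0$ is a degenerate zero, so (again by evenness) $Z\equiv0$ for $t>t_*$, forcing $w(t,\cdot)>0$ on $[-L(b),L(b)]$ and hence $u(t,0)>\theta+b$ for all $t>t_*$, contradicting $u(t,0)\to\theta$; with $w(t,0)<0$ in hand, $w(t,0)<0<w(t,L(b))$ gives $Z(t)\ge2$, so $Z(t)\equiv2$ by monotonicity --- a unique nondegenerate $y(t)\in(0,L(b))$ with pattern $[-0+]$, which is (v).

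The hard part is the accounting across the two transition times: Lemma~\ref{zero-number} cannot be invoked on a time interval straddling $T_1$ or $T_2$, because at $T_1$ the zero at $x=0$ turns degenerate while at $T_2$ both the spatial interval and the sign of $w$ at the endpoints change. Getting this right requires identifying $x=0$ (resp.\ $x=L(b)$) as the critical zero, excluding $w\equiv0$ on a subinterval via \cite{F} and the uniqueness of \eqref{p}, localizing the zero count near $x=L(b)$ just after $T_2$ with a fixed-interval application of the zero number lemma, and --- for the post-$T_2$ propagation --- feeding in the global fact $u(t,0)\to\theta$ to prevent $w(t,0)$ from ever returning to zero.
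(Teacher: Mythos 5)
Your overall strategy is sound and runs essentially parallel to the paper's, with a nice repackaging: you work on the symmetric interval $[-h(t),h(t)]$ (respectively $[-L(b),L(b)]$) and exploit the evenness of $w(t,\cdot)$ to control the global zero count $Z(t)$ by parity, rather than restricting to $[0,k(t)]$ as the paper does and applying the maximum principle region by region. Parts (i)--(iv) and the first half of (vi) go through on your route: $Z\equiv 2$ before $T_1$ gives the nondegenerate $x(t)$ with pattern $[+0-]$; the degeneracy of $x=0$ at $T_1$ (from $w_x(T_1,0)=0$ by evenness) drops $Z$ strictly; the Fernandez/uniqueness exclusion of $w(T_1,\cdot)\equiv 0$ on a subinterval gives $Z(T_1)=1$; parity forbids $Z$ from settling at an odd value, yielding $Z\equiv 0$ on $(T_1,T_2)$; and the sign at the free boundary turns this into $[-]$, with (iv) following by continuity, the strong maximum principle, and the definition of $T_2$. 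The post-$T_2$ propagation argument (ruling out a later return of $w(\cdot,0)$ to zero via parity, a drop of $Z$ to $0$, and the resulting contradiction with $u(t,0)\to\theta$) is correct and is a pleasant alternative to the paper's Hopf-lemma-at-$x=0$ step.

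However, there is a genuine gap at the crossing of $T_2$, namely in establishing that $w(t,\cdot)$ has \emph{exactly one nondegenerate} zero in $(L(b)-\delta, L(b))$ for $t$ slightly above $T_2$. You justify this by ``noting that two zeros merging as $t\searrow T_2$ would make $w(T_2,\cdot)\equiv 0$ nearby.'' That is not correct: several distinct zero curves $y_1(t)<\cdots<y_k(t)$ may all converge to the single point $L(b)$ as $t\searrow T_2$ without $w(T_2,\cdot)$ vanishing identically on any interval (think of the model $w= (t-T_2)-(x-L(b))^2$, whose two zeros both tend to $L(b)$ while $w(T_2,\cdot)=-(x-L(b))^2\not\equiv 0$). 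The Fernandez-type ``no coalescence'' conclusion only applies when a zero curve \emph{fails to have a limit}; it says nothing when several curves have a common limit. What actually rules out the multi-curve scenario (and simultaneously gives nondegeneracy of $y(t)$) is the Hopf boundary lemma applied at $(T_2, L(b))$: since $w<0$ on $Q_3$ with $w(T_2,L(b))=0$, one gets $w_x(T_2, L(b))>0$, and by continuity $w_x(t,x)>0$ on a fixed neighborhood of $(T_2,L(b))$. This strict monotonicity near $x=L(b)$ for $t$ close to $T_2$ forces at most one (hence exactly one) zero in $(L(b)-\epsilon_1, L(b))$, and it is automatically nondegenerate. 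Your proposal skips this Hopf step, so the claim ``$Z(t)=2$ just above $T_2$'' is not justified as written; once it is supplied, the rest of your argument for (v) and the second half of (vi) is fine.
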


\begin{proof} We divide the proof into several steps.

{\bf Step 1:} {\it The behavior of $w(t,x)$ for $t\in [1-\epsilon_0, T_1)$.}

We note that $w(t,x)$ satisfies
\[
w_t=w_{xx}+c(t,x) w \mbox{ for } x\in [0, k(t)], t>0
\]
with $k(t)=\min\{h(t), L(b)\}$ and some bounded function $c(t,x)$. Moreover,
$w(t,0)>0$ and $w(t, k(t))<0$ for $t\in [1-\epsilon_0, T_1)$, and for $t\in [1-\epsilon_0, 1+\epsilon_0]$,
$w(t, x)$ has a unique (nondegenerate) zero in $(0, k(t))$. Therefore we may apply Lemma 2.2 to conclude that
$w(t,x)$ has a unique zero in $(0, k(t))$ for all $t\in [1-\epsilon_0, T_1)$, and the zero is nondegenerate.
If we denote the unique zero by $x(t)$, then $t\mapsto x(t)$ is a $C^1$ function. This proves the conclusions
in part (i).

{\bf Step 2: }{\it $\lim_{t\nearrow T_1}x(t)$ and the behavior of $w(T_1, x)$.}

We next examine the limit of $x(t)$ as $t$ increases to $T_1$. If this limit does not exist, then similar to the argument in the proof of Lemma 2.4, we find that $w(T_1, x)$ is identically zero in some interval of $x$,
and it follows that $w(T_1, x)\equiv 0$ for $x\in [0, h(T_1)]$, which is impossible since $w(T_1, x)<0$ for $x$ close to $h(T_1)<L(b)$. Therefore $x_0:=\lim_{t\nearrow T_1}x(t)$ exists and $w(T_1, x_0)=0$. We necessarily have $x_0<h(T_1)$ since $w(T_1, h(T_1))<0$.

We now show that $x_0=0$. Otherwise $x_0>0$ and we may apply the maximum principle over the region
$Q_1:=\{(t,x): 0<x<x(t), 1< t\leq T_1\}$ to conclude that $w(t,x)>0$ in $Q_1$. Since $w(T_1, 0)=0$ by the definition of $T_1$, we can use the Hopf lemma to conclude that $w_x(T_1, 0)>0$. Since $V_b'(0)=0$, this implies that
$u_x(T_1, 0)>0$, which is impossible since $u(T_1,x)$ is a smooth even function of $x$ by \eqref{special-u}.
This contradiction proves $x_0=0$ and the first part of (vi) is proved.

Applying the maximum principle to $w$ over the region $Q_2:=\{(t,x): x(t)<x\leq h(t), 1<t\leq T_1\}$ and we easily see that $w<0$ in $Q_2$. In particular, $w(T_1, x)<0$ for $x\in (0, h(T_1)]$. Since we already know $w(T_1, 0)=0$,
part (ii) is proved.

{\bf Step 3:} {\it The case $t\in (T_1, T_2)$ and $t=T_2$.}

Let $Q_3:=\{(t,x): -h(t)\leq x\leq h(t), T_1<t<T_2\}$. We find that $w<0$ on the parabolic boundary of $Q_3$ except at $(T_1, 0)$. Hence we can apply the maximum principle to conclude that $w<0$ in $Q_3$, and $w(T_2, x)<0$
for $x\in (-h(T_2), h(T_2))$. By the definition of $T_2$, we see that $w(T_2, h(T_2))=0$. This proves part (iii) and part (iv).

{\bf Step 4:} {\it The case $t\in (T_2, T_2+\epsilon)$.}

Continuing from the last paragraph, we may apply the Hopf boundary lemma to conclude that $w_x(T_2, h(T_2))>0$. By continuity,
there exists $\epsilon_1>0$ small so that $w_x(t, x)>0$ for $t\in (T_2, T_2+\epsilon_1]$ and $x\in [h(T_2)-\epsilon_1, h(T_2)]=[L(b)-\epsilon_1, L(b)]$. Since $h(t)>h(T_2)=L(b)$ for $t\in (T_2, T_2+\epsilon_1]$,
we find that $w(t, L(b))>0$ for such $t$. From $w(T_2, L(b)-\epsilon_1)<0$, by continuity, we can find $\epsilon_2\in (0,\epsilon_1]$ small so that $w(t, L(b)-\epsilon_1)<0$ for $t\in (T_2, T_2+\epsilon_2]$. Thus for fixed $t\in (T_2, T_2+\epsilon_2]$, the strictly increasing function $w(t, x)$ over $[L(b)-\epsilon_1, L(b)]$ has a unique zero $y(t)\in (L(b)-\epsilon_1, L(b))$, and the zero is nondegenerate. Hence $y(t)$ is a $C^1$ function  for $t\in (T_2, T_2+\epsilon_2]$. 

We now examine the limit of $y(t)$ as $t$ decreases to $T_2$. Since $w(T_2,x)<0$ in $[0, L(b))$ and $w(T_2, L(b))=0$, the limit necessarily exists and has value $L(b)$. This proves the second part of (vi).

Since $w(T_2, 0)<0$, by shrinking $\epsilon_2$ further we may assume that $w(t, 0)<0$ for $t\in [T_2, T_2+\epsilon_2]$. Applying the maximum principle to $w$ over $Q_4:=\{(t,x): 0\leq x<y(t), T_2\leq t\leq T_2+\epsilon_2\}$, we see that $w<0$ in $Q_4$. Therefore, for $t\in (T_2, T_2+\epsilon_2]$,  $y(t)\in (0, L(b))$ is the unique zero of $w(t,x)$ 
over $[0, L(b)]$ and it is nondegenerate.

{\bf Step 5:} {\it The case $t>T_2$.}

We claim that $w(t, 0)<0$ for all $t>T_2$. Otherwise there exists $t_1>T_2$ such that $w(t,0)<0$ for $t\in [T_2, t_1)$ and $w(t_1, 0)=0$. By Lemma 2.2, the fact that $w(t,0)<0<w(t, L(b))$ for $t\in (T_2, t_1)$, and the existence of $y(t)$ for $t\in (T_2, T_2+\epsilon)$, we see that 
 $w(t,x)$ has a unique nondegenerate zero over $[0, L(b)]$ for every $t\in (T_2, t_1)$. Hence $y(t)$ can be extended to all $t\in (T_2, t_1)$. 

Let us look at the limit of $y(t)$ as $t$ increases to $t_1$. If the limit does not exist, then as before we deduce $w(t_1, x)\equiv 0$ over $[0, L(b)]$, which is impossible since $w(t_1, L(b))>0$. Hence the limit exists, and we denote it by $y(t_1)$. Clearly $w(t_1, y(t_1))=0$, which implies $y(t_1)<L(b)$.

 If $y(t_1)=0$, then applying the maximum principle to $w$ over $\{(t,x): y(t)<x<L(b), T_2<t\leq t_1\}$ we obtain $w(t_1, x)>0$ for $x\in (y(t_1), L(b))=(0, L(b))$.
It follows that $u(t_1,x)\geq V_b(x)$ for $x\in [-L(b), L(b)]$, which implies, by the comparison principle, $u(t,x)\geq V_b(x)$ for all $t>t_1$ and $x\in [-L(b), L(b)]$, contradicting our assumption that $u(t,x)\to\theta$ as $t\to+\infty$.

If $y(t_1)\in (0, L(b))$, then
applying the maximum principle to $w$ over $Q_5:=\{(t,x): 0<x<y(t), T_2\leq t\leq t_1\}$, we see that $w<0$ in $Q_5$. By Hopf's boundary lemma we have $w_x(t_1, 0)<0$, which implies $u_x(t_1, 0)<0$. But this is a contradiction since $u(t_1,x)$ is even in $x$. Our claim is now proved.

We now use Lemma 2.2 to $w$ over the region $0<x<L(b), t>T_2$. By our earlier knowledge on the zeros of $w(t,x)$ for fixed $t\in (T_2, T_2+\epsilon_2]$, we  see that for each fixed $t>T_2$, $w(t,x)$
has at most one zero in $[0, L(b)]$. Since $w(t, 0)<0$ and $w(t, L(b))>0$ there exists at least one zero in this interval. Therefore there is a unique zero and it must be nondegenerate. Denote this zero by $y(t)$, we find that $y(t)$ is a $C^1$ function. This proves part (v) and all the conclusions in the lemma are now proved.
\end{proof}

\begin{lem}\label{T1>T2}
Suppose  $T_1>T_2$. Then
\begin{itemize}
\item[(i)] for $t\in [1, T_2)$, $w(t,x)$ has a unique nondegenerate zero $x(t)$ in $(0, h(t))$, with sign-changing pattern $[+0-]$ over $[0, h(t)]$;

\item[(ii)] $w(T_2, x)$ has a unique nondegenerate zero $x(T_2)$ in $(0, L(b))$, plus a second zero at $x=L(b)$, and it has  sign-changing pattern $[+0-0]$ over $[0, L(b)]$;

\item[(iii)] for $t\in (T_2, T_1)$, $w(t,x)$ has exactly two nondegenerate zeros $x(t)<y(t)$ in $(0, L(b))$, with sign-changing pattern $[+0-0+]$ over $[0, L(b)]$;

\item[(iv)] $w(T_1, x)$ has a unique nondegenerate zero $y(T_1)$ in $(0, L(b))$, plus a second zero at $x=0$, and it has sign-changing pattern $[0-0+]$ over $[0, L(b)]$;

\item[(v)] for $t>T_1$, $w(t,x)$ has a unique nondegenerate zero $y(t)$ in $(0, L(b))$, with sign-changing pattern 
$[-0+]$ over $[0, L(b)]$;

\item[(vi)] $x(t)$ is a $C^1$ function for $t\in [1, T_1)$  with $\lim_{t\nearrow T_1}x(t)=0$, and $y(t)$ is a $C^1$ function for $t>T_2$ with $\lim_{t\searrow T_2}y(t)=L(b)$.
\end{itemize}
\end{lem}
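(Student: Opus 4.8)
Here I would follow the pattern of the proof of Lemma~\ref{T1<T2}, with the two critical times exchanging roles: now $h(t)$ reaches the level $L(b)$ \emph{before} $w(t,0)$ changes sign, so a zero of $w(t,\cdot)$ is first \emph{created} at the right boundary $x=L(b)$ at time $T_2$, and only afterwards does the left sign-change curve \emph{escape} through the symmetry axis $x=0$ at time $T_1$. Throughout I would use that $w(t,\cdot)$ is even on $[-L(b),L(b)]$, since $u(t,\cdot)$ is even by \eqref{special-u} and $V_b$ is even by Lemma~\ref{lem:lb-b}(ii); in particular $w_x(t,0)=u_x(t,0)-V_b'(0)=0$ for every $t$. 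I would also rely on two recurring devices: (a) the Hopf boundary lemma near $x=0$, which is incompatible with $w_x(t_*,0)=0$ whenever a one-signed region of $w$ abuts $x=0$ up to time $t_*$; and (b) comparison with $V_b$, extended by $0$ to $\R$, which is a stationary lower solution of \eqref{p} (it solves $v''+f(v)=0$, vanishes at $\pm L(b)$, and since $V_b'(L(b))<0$ the prescribed speed $-\mu V_b'(\pm L(b))$ points outward, dominating the zero speed of its frozen support), so that $u(t_*,\cdot)\ge V_b$ on $[-L(b),L(b)]$ with $h(t_*)\ge L(b)$ would force $u(t,0)\ge\theta+b>\theta$ for all $t>t_*$, contradicting $u(t,0)\to\theta$.

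First, for $t\in[1,T_2)$ one has $k(t)=h(t)<L(b)$, $w(t,0)>0$ (as $t<T_1$), and $w(t,h(t))=-V_b(h(t))<0$, so Lemma~\ref{zero-number} applied on the moving domain $\{0\le x\le h(t)\}$, started from the unique nondegenerate zero present for $t$ near $1$, gives part (i): a unique nondegenerate $C^1$ zero curve $x(t)$ with sign-changing pattern $[+0-]$. Next I would examine $t=T_2$. As in the proof of Lemma~\ref{lem2.4}, $\lim_{t\nearrow T_2}x(t)=:x(T_2)$ exists, because non-existence would force $w(T_2,\cdot)\equiv 0$ on a subinterval and hence (by Theorem~2 of \cite{F}) on $[0,L(b)]$, contradicting $w(T_2,0)>0$; the same inequality excludes $x(T_2)=0$, and device (b) excludes $x(T_2)=L(b)$ (in that case $x(t)\to L(b)$ forces $w(T_2,\cdot)\ge 0$ on $[0,L(b))$, while $w(T_2,L(b))=u(T_2,h(T_2))-V_b(L(b))=0$, so $u(T_2,\cdot)\ge V_b$). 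Thus $x(T_2)\in(0,L(b))$, and the strong maximum principle over $\{0<x<x(t)\}$ and over $\{x(t)<x<h(t)\}$ yields pattern $[+0-0]$ for $w(T_2,\cdot)$; a localized use of Lemma~\ref{zero-number} near $(T_2,x(T_2))$, where the zero count is $1$ on both sides of $T_2$, shows $x(T_2)$ is nondegenerate, so $x(t)$ continues as a $C^1$ curve past $T_2$. This is part (ii).

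On $(T_2,T_1)$ I would first establish the birth of $y(t)$: Hopf at $(T_2,L(b))$ gives $w_x(T_2,L(b))>0$, and since $h(t)>L(b)$ for $t>T_2$ we have $w(t,L(b))=u(t,L(b))>0$, so for $t$ slightly above $T_2$ the function $w(t,\cdot)$ is strictly increasing near $L(b)$ and there has a unique nondegenerate zero $y(t)$ with $y(t)\to L(b)$ as $t\searrow T_2$, while $x(t)$ persists near $x(T_2)<L(b)$; hence the pattern is $[+0-0+]$ just after $T_2$. Then on the strip $\{0\le x\le L(b),\ T_2<t<T_1\}$ both boundary values $w(t,0)>0$ and $w(t,L(b))>0$ are nonzero, so Lemma~\ref{zero-number} applies; the interior zero count is nonincreasing, equals $2$ just after $T_2$, stays even, and cannot drop to $0$ (device (b) again). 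Therefore it is identically $2$, all zeros remain nondegenerate, and $x(t)<y(t)$ are $C^1$ on $(T_2,T_1)$ with pattern $[+0-0+]$, matching the earlier branches. This is part (iii).

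For $t=T_1$ and $t>T_1$: the limits $x(T_1):=\lim_{t\nearrow T_1}x(t)$ and $y(T_1):=\lim_{t\nearrow T_1}y(t)$ exist (non-existence forces $w(T_1,\cdot)\equiv 0$, impossible since $w(T_1,L(b))>0$). The maximum principle over $\{0<x<x(t)\}$ gives $w(T_1,\cdot)\ge 0$ on $[0,x(T_1)]$, so if $x(T_1)>0$ then Hopf would give $w_x(T_1,0)>0$, contradicting $w_x(T_1,0)=u_x(T_1,0)=0$; hence $x(T_1)=0$. The maximum principle over $\{x(t)<x<y(t)\}$ and over $\{y(t)<x<L(b)\}$ then yields pattern $[0-0+]$ for $w(T_1,\cdot)$, with $y(T_1)\in(0,L(b))$ (the value $0$ being excluded by device (b)) nondegenerate; this is part (iv). For $t>T_1$ the key is to count zeros on the \emph{symmetric} interval $[-L(b),L(b)]$: there $w(T_1,\cdot)$ has zeros $\{-y(T_1),0,y(T_1)\}$ with $0$ a degenerate zero, so by Lemma~\ref{zero-number} the count drops from $4$, forcing an even count just after $T_1$ (where the count is locally constant, hence all zeros are nondegenerate) in $\{0,2\}$; it is not $0$ (device (b)), hence exactly $2$, so the pair $\pm x(t)$ annihilates at the origin, $w(t,0)\ne 0$ just after $T_1$, and the surviving nondegenerate zeros $\pm y(t)$ force $w(t,0)<0$, i.e.\ pattern $[-0+]$ on $[0,L(b)]$. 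Finally, the argument of Step~5 in the proof of Lemma~\ref{T1<T2} — a contradiction at the first $t_1>T_1$ with $w(t_1,0)=0$, obtained via Hopf at $x=0$ or device (b) according to whether $\lim_{t\nearrow t_1}y(t)$ is interior or equals $0$ — shows $w(t,0)<0$ for all $t>T_1$, and Lemma~\ref{zero-number} on $\{0\le x\le L(b),\ t>T_1\}$ delivers the unique nondegenerate $C^1$ zero $y(t)$ with pattern $[-0+]$, which is part (v); part (vi) then follows since $x(T_2)$ and $y(T_1)$ are nondegenerate, so $x(t)$ is $C^1$ on $[1,T_1)$ and $y(t)$ is $C^1$ on $(T_2,\infty)$. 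The main obstacle is exactly the transition at $t=T_1$: the left zero curve meeting the symmetry axis produces a genuine degenerate zero of $w(T_1,\cdot)$ at $x=0$ that cannot be followed by the implicit function theorem, and the Dirichlet-type zero-number count on $[0,L(b)]$ is unavailable because $w(t,0)$ changes sign there; passing to the symmetric interval $[-L(b),L(b)]$, where $w$ is even and the boundary data $w(t,\pm L(b))=u(t,\pm L(b))$ remain positive near $T_1$, is what resolves it, while comparison with the frozen lower solution $V_b$ is what eliminates the various ``collision'' scenarios along the way.
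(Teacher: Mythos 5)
Your proof is correct, and for parts (iii) and (v) it takes a genuinely different route from the paper. For (iii), the paper introduces $T^*:=\sup\{s:x(t),y(t)\text{ nondegenerate on }(T_2,s)\}$ and derives explicit contradictions (via the maximum principle over $\{x(t)<x<y(t)\}$ and careful sub-cases $x(T^*)=y(T^*)$ vs.\ $x(T^*)<y(T^*)$) to show $T^*=T_1$. You replace this with a parity argument: since $w(t,0)>0$ and $w(t,L(b))>0$ on $(T_2,T_1)$, any constant-count interval has an even count in $\{0,2\}$, a count of $1$ forces a degenerate touching zero and hence $u\ge V_b$, and count $0$ also gives $u\ge V_b$, so the comparison ``device (b)'' pins the count to exactly $2$ and rules out degeneracies simultaneously. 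This is cleaner and avoids the paper's case distinctions, at the price of an extra sentence to justify that a singleton or empty zero set with positive boundary values forces $w\ge 0$. For (v), the paper first establishes $w(t,0)<0$ on $(T_1,T_1+\epsilon)$ via the maximum principle on $\{-y(t)<x<y(t)\}$ and then separately rules out later zeros of $t\mapsto w(t,0)$ by a Hopf/device-(b) dichotomy, before applying Lemma~\ref{zero-number} on $[0,L(b)]$. You instead count zeros directly on the symmetric interval $[-L(b),L(b)]$, where the boundary data stay positive across $T_1$: the count is $4$ on $(T_2,T_1)$, the degenerate zero at $(T_1,0)$ (degenerate because $u_x(T_1,0)=V_b'(0)=0$) forces a strict drop to at most $3$, and evenness of $w$ in $x$ plus device (b) pin the subsequent count to $2$, which both kills the $[+\cdots]$ option at $x=0$ and yields $[-0+]$ in one stroke. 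This is arguably the more elegant argument; what it buys is avoiding the explicit Hopf-at-$x=0$ dichotomy, while what it requires is the extra observation (which you make) that for an even solution with nonvanishing symmetric boundary data, any odd zero count or any zero at the origin automatically entails a degeneracy. Parts (i), (ii), (iv), (vi) follow the paper essentially verbatim, including the treatment of the non-existence-of-limits subcase via unique continuation and the exclusion of $x(T_2)=L(b)$ and $y(T_1)=0$ by device (b).
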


\begin{proof} For  clarity we again break the proof into several steps.

{\bf Step 1:} {\it The case $t\in [1, T_2)$.}\;\;

The proof of part (i) is the same to that for Lemma \ref{T1<T2}. So we have a unique nondegenerate zero of $w(t,x)$ for $t\in [1, T_2)$, denoted by $x(t)$.

{\bf Step 2:} {\it  $\lim_{t\nearrow T_2}x(t)$ and the behavior of $w(T_2, x)$.}\;\;

We show that $\lim_{t\nearrow T_2}x(t)$ exists and belongs to $(0, L(b))$. If the limit does not exist, then as before we deduce $w(T_2, x)\equiv 0$ in $[0, L(b)]$, which is impossible since $w(T_2,0)>0$ by the assumption  $T_2<T_1$. Thus the limit exists and we denote it by $x(T_2)$. 

We next prove that $x(T_2)\in (0, L(b))$. Since $w(T_2, x(T_2))=0$ and $T_2<T_1$, we necessarily have $x(T_2)>0$. We show now $x(T_2)=L(b)$ leads to a contradiction. Indeed, in such a case, we can apply the maximum principle to $w$ over the region $\{(t,x): 0\leq x<x(t), 1<t\leq T_2\}$ to see that $w>0$ in this region.
In particular, $w(T_2,x)>0$ for $x\in [0, x(T_2))=[0, L(b))$. It follows that
\[
u(T_2,x)\geq V_b(x) \mbox{ for } x\in [-L(b), L(b)].
\]
Using the comparision principle we deduce $u(t,x)\geq V_b(x)$ for $x\in [-L(b), L(b)]$ and all $t>T_2$, which contradicts the assumption that $u(t,x)\to\theta$ as $t\to+\infty$. Thus we have proved that
\[
x(T_2)=\lim_{t\nearrow T_2}x(t)\in (0, L(b)).
\]
Moreover, applying the maximum principle to $w$ over $\{(t,x): 0\leq x<x(t), 1<t\leq T_2\}$ we deduce $w>0$ in this region and hence $w(T_2,x)>0$ for $x\in [0, x(T_2))$. Similarly, using the maximum principle to $w$ over
$\{(t,x): x(t)<x<h(t), 1<t\leq T_2\}$ we deduce  $w(T_2,x)<0$ for $x\in (x(T_2), L(b))$. Clearly
$w(T_2, L(b))=w(T_2, h(T_2))=0$. We have thus proved part (ii) except that we still have to show that $x(T_2)$ is a nondegenerate zero.

{\bf Step 3:} {\it The case $ t\in (T_2, T_2+\epsilon)$}.\;\;

Conitinuing from the last paragraph, we may use the Hopf boundary lemma to obtain $w_x(T_2, L(b))>0$.
Thus we can now argue as in the proof of Lemma \ref{T1<T2}  to find $\epsilon_1>0$ and $\epsilon_2\in (0,\epsilon_1]$ such that for each $t\in (T_2, T_2+\epsilon_2]$, $w(t,x)$ over $[L(b)-\epsilon_1, L(b)]$ has a unique nondegenerate zero $y(t)\in (L(b)-\epsilon_1, L(b))$, whose sign-changing pattern over $[L(b)-\epsilon_1, L(b)]$ is $[-0+]$. 

Applying Lemma 2.2 to $w$ over $\{(t,x): 0<x<\min\{h(t), L(b)\}-\epsilon_1, t\in (1, T_2+\epsilon_2)\} $(and we may shrink $\epsilon_1$ and $\epsilon_2$ to guarantee that $x(t)<r(t):=\min\{h(t), L(b)\}-\epsilon_1$ for $t\in (1, T_2+\epsilon_2)\subset (1, T_1)$), we find that $w(t,x)$ can have at most one zero in $(0, r(t))$ for $t\in (1, T_2+\epsilon_2)$. On the other hand, since $w(t,0)>0>w(t, r(t))$ for such $t$,  there exists at least one zero. Therefore there exists exactly one zero and it is nondegenerate. In other words, the nondegenerate zero $x(t)$ can be extended smoothly to $t\in [1, T_2+\epsilon)$, and $x(t)<L(b)-\epsilon_1<y(t)$ for $t\in(T_2, T_2+\epsilon_2)$. This in particular proves the remaining part of (ii).

We now consider the limt of $y(t)$ as $t$ decreases to $T_2$. Since $y(t)\in (L(b)-\epsilon_1, L(b))$ for $t\in (T_2, T_2+\epsilon_2)$ and  $w(T_2,x)<0$ for $x\in [L(b)-\epsilon_1, L(b))$, we necessarily have $\lim_{t\searrow T_2}y(t)=L(b)$. 

{\bf Step 4:} {\it The case $t\in (T_2, T_1)$.}\;\;

In view of Lemma 2.2 and the fact that
$w(t,0)>0, w(t, L(b))>0$ for $t\in (T_1, T_2)$, and that for $t\in (T_2, T_2+\epsilon_2)$, $w(t,x)$ has exactly two nondegenerate zeros $x(t)<y(t)$ over the interval $[0, L(b)]$, we see that $w(t,x)$ has at most two zeros over this interval for every $t\in (T_1, T_2)$.

We claim that for every $t\in (T_2, T_1)$, $w(t,x)$ has  exactly two nondegenerate zeros in $(0, L(b))$. Indeed, by the implicit function theorem, the nondegenerate zeros $x(t)$ and $y(t)$ can be continued to larger $t$ as long as they stay nondegenerate.  Define
\[
T^*:=\sup\{s\in (T_2, T_1): x(t) \mbox{ and $y(t)$ are nondegenerate zeros of $w(t,x)$ for every $t\in (T_2, s)$}\}.
\]
Clearly $T^*\geq T_2+\epsilon_2$. If $T^*=T_1$, then $x(t)$ and $y(t)$ are two nondegenerate zeros of $w(t,x)$ 
in $[0, L(b)]$ for
every $t\in (T_2, T_1)$. As there can be at most two zeros, our claim is proved. If $T^*<T_1$, we show that a contradiction arises. In such a case, let us consider the limit of $x(t)$ and $y(t)$ as $t$ increases to $T^*$.
Both limits must exist for otherwise $w(T^*,x)$ would be identically zero for some interval of $x$ in $[0,L(b)]$, contradicting the fact that it has at most two zeros. Let $x(T^*)$ and $y(T^*)$ denote these limits, respectively.
Then $w(T^*, x(T^*))=w(T^*, y(T^*))=0$ and hence $0<x(T^*)\leq y(T^*)<L(b)$. 
Using the maximum principle to $w$ over $\{(t,x): 0\leq x<x(t), T_2<t\leq T^*\}$, we deduce that $w(T^*,x)>0$
for $x\in [0, x(T^*))$. Similarly, $w(T^*, x)>0$ for $x\in (y(T^*), L(b)]$. 

If $x(T^*)=y(T^*)$, we have
$w(T^*, x)\geq 0$ for $x\in [0, L(b)]$. It follows that $u(T^*,x)\geq V_b(x)$ for $x\in [-L(b), L(b)]$. As before we can use the comparison principle to deduce that $u(t,x)\geq V_b(x)$ for all $t>T^*$ and $x\in [-L(b), L(b)]$, contradicting our assumption that $u(t,x)\to\theta$ as $t\to\infty$.

If $x(T^*)<y(T^*)$, then we can apply the maximum principle to $w$ over $\{(t,x): x(t)<x<y(t),\; t\in (T_2, T^*]\}$ to deduce that $w(t,x)<0$ for $x\in (x(T^*), y(T^*))$. Set $x_0:=[x(T^*)+y(T^*)]/2$. Then $w(T^*,x_0)<0$ and by continuity we can find $\epsilon_3>0$ small such that $w(t, x_0)<0$ for $t\in (T^*-\epsilon_3, T^*+\epsilon_3)$. 
We may now use Lemma 2.2 to $w$ for $0<x<x_0$ and $t\in (T^*-\epsilon_3, T^*+\epsilon_3)$
to conclude that $w(t,x)$ has at most one zero in $[0,x_0]$ for every such $t$. On the other hand from
$w(t,0)>0>w(t,x_0)$ we see that it has at least one zero. Therefore it has a unique nondegenerate zero
in $[0,x_0]$ for every $t\in (T^*-\epsilon_3, T^*+\epsilon_3)$. This implies that $x(t)$ remains a nondegenerate zero of $w(t,x)$ for $t\in [1, T^*+\epsilon_3)$. Similarly we can apply Lemma 2.2 to $w$ for $x_0<x<L(b)$ and $t\in 
(T^*-\epsilon_3, T^*+\epsilon_3)$ to see that $y(t)$ remains nondegenerate for every $t\in (T_2, T^*+\epsilon_3)$.
But this contradicts the definition of $T^*$. Our claim is thus proved. Moreover, the discussion above also shows that the sign-changing pattern of $w(t,x)$ over $[0, L(b)]$ for every $t\in (T_2, T_1)$ is $[+0-0+]$. This proves part (iii).

{\bf Step 5:} {\it $\lim_{t\nearrow T_1}x(t)$, $\lim_{t\nearrow T_1}y(t)$ and the behavior of $w(T_1,x)$.}

We first observe that both limits exist for otherwise, as before, we can deduce $w(T_1, x)\equiv 0$ for $x\in [0, L(b)]$, which is impossible since $w(T_1, L(b))>0$. Denote the two limits by $x(T_1)$ and $y(T_1)$, respectively.
Then necessarily $w(T_1, x(T_1))=w(T_1, y(T_1))=0$ and $0\leq x(T_1)\leq y(T_1)<L(b)$. By the same argument used in Step 2 of the proof of Lemma \ref{T1<T2}, we find that $x(T_1)=0$. We show next that $y(T_1)>0$.
Indeed, if $y(T_1)=0$, then we can use the maximum principle to $w$ over $\{(t,x): y(t)<x<L(b), t\in (T_2, T_1]\}$ to deduce that $w(T_1, x)>0$ in $(0, L(b)]$. It follows that $u(T_1,x)\geq V_b(x)$ for $x\in [-L(b), L(b)]$, which implies $u(t,x)\geq V_b(x)$ for all $t>T_1$ and $x\in [-L(b), L(b)]$, a contradiction to the assumption that $u(t,x)\to\theta$ as $t\to+\infty$. We have thus proved that
\[
0=x(T_1)<y(T_1)<L(b).
\]
Applying the maximum principle to $w$ over $\{(t,x): x(t)<x<y(t),\; t\in(T_2, T_1]\}$ we deduce $w(T_1, x)<0$ for 
$x\in (0, y(T_1))$. Similarly using the maximum principle to $w$ over $\{(t,x): y(t)<x<L(b), t\in (T_2, T_1]\}$, we obtain $w(T_1, x)>0$ for $x\in (y(T_1), L(b)]$. Thus $w(T_1, x)$ has sign-changing pattern $[0-0+]$ over $[0, L(b)]$. This proves part (iv) except that we still have to show that $y(T_1)$ is a nondegenerate zero.

{\bf Step 6:} {\it The case $t\in [T_1, T_1+\epsilon)$.}

Fix $y_0\in (0, y(T_1))$. We have $w(T_1, y_0)<0$. By continuity there exists $\epsilon>0$ small such that
$w(t, y_0)<0$ for $t\in (T_1-\epsilon, T_1+\epsilon)$. We now apply Lemma 2.2 to $w(t,x)$ over the region
$y_0<x<L(b)$ and $t\in (T_1-\epsilon, T_1+\epsilon)$. Since $w(t,y_0)<0<w(t, L(b))$ for such $t$, $w(t,x)$ has at least one zero in $(y_0, L(b))$. But for $t\in (T_1-\epsilon, T_1)$, we already know that there is a unique zero.
Hence Lemma 2.2 infers that $w(t,x)$ has a unique zero in $(y_0, L(b))$ and it is nondegenerate for every $t
\in (T_1-\epsilon, T_1+\epsilon)$. This implies that $y(t)$ can be extended to all $t\in (T_2, T_1+\epsilon)$ as a nondegenerate zero of $w(t,x)$. In particular, $y(T_1)$ is a nondegenerate zero of $w(T_1, x)$. We have now proved all the conclusions in part (iv) of the lemma.

 Using the maximum principle to $w$ over $\{(t,x): -y(t)<x<y(t), T_1<t<T_1+\epsilon\}$,
we find that $w<0$ in this region. In particular, $w(t,x)<0$ for $x\in [0, y(t))$ and $t\in (T_1, T_1+\epsilon)$.
Hence $y(t)$ is the unique nondegenerate zero of $w(t,x)$ in $[0, L(b)]$ for every $t\in (T_1, T_1+\epsilon)$.
 
{\bf Step 7:} {\it The case $t>T_1$.}

From the discussions in Step 6 above, we already know that $w(t,0)<0$ for $t\in (T_1, T_1+\epsilon)$.
The same argument in Step 5 of the proof of Lemma \ref{T1<T2} shows that $w(t,0)<0$ for all $t>T_1$.
We may now use Lemma 2.2 to $w$ over the region $0<x<L(b)$ and $t>T_1$ in the same way as in Step 5  of Lemma \ref{T1<T2} to conclude that, for every $t>T_1$, $w(t,x)$ has a unique nondegenerate zero $y(t)$ in $(0, L(b))$, and $w(t,x)$ has sign-changing pattern $[-0+]$ over $[0, L(b)]$. This proves part (v) of the lemma.

Since $y(t)$ is nondegenerate, it is a $C^1$ function for $t>T_1$. 
Let us note that the other conclusions in part (vi) have already been proved in Steps 3, 4 and 5. The proof of the lemma is now complete.
\end{proof}

\begin{lem}\label{T1=T2}
Suppose  $T_1=T_2$. Then
\begin{itemize}
\item[(i)] for $t\in [1, T_1)$, $w(t,x)$ has a unique nondegenerate zero $x(t)$ in $(0, h(t))$, with sign-changing pattern $[+0-]$ over $[0, h(t)]$;

\item[(ii)] $w(T_1,x)$ has sign-changing pattern $[0-0]$ over $[0, L(b)]$;

\item[(iii)] for $t>T_1=T_2$, $w(t,x)$ has a unique nondegenerate zero $y(t)$ in $(0, L(b))$, with sign-changing pattern $[-0+]$ over $[0, L(b)]$;

\item[(iv)] $\lim_{t\nearrow T_1}x(t)=0$, $\lim_{t\searrow T_1}y(t)=L(b)$.

\end{itemize}

\end{lem}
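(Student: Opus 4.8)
The plan is to carry over to the coincident‑time case $T_1=T_2$ the scheme used in the proofs of Lemmas \ref{T1<T2} and \ref{T1>T2}: combine the zero number lemma (Lemma \ref{zero-number}) with the maximum principle, the Hopf boundary lemma, and the comparison principle for the equation $z_t=z_{xx}+f(z)$, which is solved both by $u(t,\cdot)$ and by the stationary function $V_b$. Throughout one uses that $w(t,\cdot):=u(t,\cdot)-V_b$ is even (both $u(t,\cdot)$ and $V_b$ are even), that $w(T_1,0)=0$ and $w_x(T_1,0)=u_x(T_1,0)=0$ (evenness of $u(t,\cdot)$ and $V_b'(0)=0$), and that $w(T_1,L(b))=u(T_1,L(b))-V_b(L(b))=0$ since $h(T_1)=L(b)$. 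Because the two critical events now occur at the same instant there is no intermediate time interval; the price is that the passage through the corner $(T_1,L(b))$ needs a slightly different argument, discussed at the end.

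For part (i): when $t\in[1-\epsilon_0,T_1)$ one has $w(t,0)>0$ (as $t<T_1$) and $h(t)<L(b)$ (as $t<T_2=T_1$), hence $w(t,h(t))=-V_b(h(t))<0$; since near $t=1$ the function $w(t,\cdot)$ has a single nondegenerate zero in $(0,h(t))$, Lemma \ref{zero-number} on $\{0\le x\le h(t)\}$ gives for all $t\in[1,T_1)$ a unique nondegenerate zero $x(t)\in(0,h(t))$, a $C^1$ function of $t$, with pattern $[+0-]$. For part (ii) and the first assertion of (iv): as in Step 2 of the proof of Lemma \ref{T1<T2}, the limit $x_0:=\lim_{t\nearrow T_1}x(t)$ exists — otherwise $w(T_1,\cdot)\equiv0$ on a subinterval and hence, by Theorem 2 of \cite{F}, on all of $[0,L(b)]$, which contradicts $u(t,\cdot)\to\theta$ upon comparing $u$ with $V_b$ (note $V_b(0)=\theta+b>\theta$); if $x_0>0$, the maximum principle on $\{0<x<x(t),\ 1<t\le T_1\}$ and the Hopf lemma give $w_x(T_1,0)>0$, hence $u_x(T_1,0)>0$, contradicting evenness, so $x_0=0$; then the maximum principle on $\{x(t)<x\le h(t),\ 1<t\le T_1\}$ gives $w(T_1,x)<0$ on $(0,L(b))$, i.e.\ $w(T_1,\cdot)$ has pattern $[0-0]$ on $[0,L(b)]$.

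For part (iii) and the second assertion of (iv): for $t>T_1$ the point $L(b)$ lies interior to $(-h(t),h(t))$, so $w(t,L(b))=u(t,L(b))>0$, while $w(T_1,L(b))=0$ with $w(T_1,\cdot)<0$ just inside; arguing as in Step 4 of the proof of Lemma \ref{T1<T2}, the Hopf lemma gives $w_x(T_1,L(b))>0$, so there are $\epsilon_1\ge\epsilon_2>0$ with $w(t,\cdot)$ strictly increasing on $[L(b)-\epsilon_1,L(b)]$, from a negative to a positive value, for $t\in(T_1,T_1+\epsilon_2]$; this yields a unique nondegenerate zero $y(t)$ there with $\lim_{t\searrow T_1}y(t)=L(b)$. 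To control $w$ on $[0,L(b)-\tfrac12\epsilon_1]$ I would compare $u$ and $V_b$, as solutions of $z_t=z_{xx}+f(z)$, on the \emph{fixed} interval $[-L(b)+\tfrac12\epsilon_1,L(b)-\tfrac12\epsilon_1]$ over a short time interval after $T_1$: there $w(T_1,\cdot)\le0$ with its only zero at $0$, and $w(t,\pm(L(b)-\tfrac12\epsilon_1))<0$ for $t$ near $T_1$, so the comparison principle gives $w(t,x)<0$ in the interior for $t>T_1$, in particular $w(t,0)<0$; matching the two descriptions gives, for $t\in(T_1,T_1+\epsilon)$, the pattern $[-0+]$ on $[0,L(b)]$ with $y(t)$ nondegenerate. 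The extension to all $t>T_1$ is then essentially Step 5 of the proof of Lemma \ref{T1<T2}: if $t_1:=\inf\{t>T_1:w(t,0)=0\}$ were finite, then $y(t)$ extends as the unique nondegenerate zero on $(T_1,t_1)$, its limit $y(t_1)\in[0,L(b))$ exists, and both $y(t_1)=0$ (comparison with $V_b$, contradicting $u\to\theta$) and $y(t_1)\in(0,L(b))$ (maximum principle and Hopf forcing $u_x(t_1,0)<0$, contradicting evenness) are impossible; hence $w(t,0)<0$ for all $t>T_1$, and Lemma \ref{zero-number} on $\{0<x<L(b),\ t>T_1\}$ gives a unique nondegenerate zero $y(t)\in(0,L(b))$ with pattern $[-0+]$ for all $t>T_1$.

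The only genuinely new point, and the one I expect to be the obstacle, is establishing $w(t,0)<0$ for $t$ just above $T_1$. In Lemma \ref{T1<T2} (where $T_1<T_2$) this followed from a plain maximum principle over a full neighborhood of $x=0$ across $t=T_1$, since the free boundary was still strictly inside $L(b)$ and $w$ stayed negative on the relevant lateral boundary. Here, for $t>T_1=T_2$, one has $h(t)>L(b)$ and $w(t,L(b))=u(t,L(b))>0$, so that argument fails; retreating to a fixed subinterval $[-L(b)+\delta,L(b)-\delta]$ — on which $w(T_1,\cdot)$ has a single, central zero and is strictly negative at the endpoints — restores the applicability of the comparison principle, and this is essentially the whole difficulty, all the remaining ingredients being already available from Lemmas \ref{T1<T2} and \ref{T1>T2}.
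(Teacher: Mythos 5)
Your proposal is correct and follows essentially the same route as the paper: parts (i), (ii) and the first limit in (iv) are handled exactly as in Lemma \ref{T1<T2}, Step~2 (with the same modification when $w(T_1,\cdot)\equiv 0$, namely a comparison with $V_b$ rather than a Hopf argument, since now $w(T_1,h(T_1))=0$); the local existence of $y(t)$ near $x=L(b)$ and its limit come from the Hopf lemma at the corner as in Step~4; and the extension to all $t>T_1$ is Step~5 of Lemma \ref{T1<T2}. The only point where you deviate is in showing $w(t,x)<0$ on $[0,y(t))$ for $t$ slightly above $T_1=T_2$: you apply the strong maximum principle on a fixed interval $[-L(b)+\tfrac12\epsilon_1,L(b)-\tfrac12\epsilon_1]$, whereas the paper applies it on the time-dependent region $\{-y(t)<x<y(t),\ T_1<t\le T_1+\epsilon_2\}$ where $w$ vanishes on the lateral boundary; both domains make the argument go through, and the difference is cosmetic.
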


\begin{proof}
This follows from simple variations of the proof of Lemma \ref{T1<T2}. The proof of part (i) is the same as Step 1 of the proof of Lemma \ref{T1<T2}. 

Step 2 there requires variations. If  $\lim_{t\nearrow T_1}x(t)$ does not exist, then as before we obtain $w(T_1, x)\equiv 0$ for $x\in [0, L(b)]$. Since this time $w(T_1, h(T_1))=0$, we derive a contradiction in a different way as follows. From the above identity we obtain $u(T_1,x)\equiv V_b(x)$, which implies, by the comparison principle, $u(t,x)\geq V_b(x)$ for all $t>T_1$ and $x\in [-L(b), L(b)]$. But this contradicts our assumption that $u(t,x)\to\theta$ as $t\to +\infty$. Therefore $x_0:=\lim_{t\nearrow T_1}x(t)$ exists. If $x_0\in (0, L(b)]$, then we can derive a contradiction as in Step 2 of Lemma \ref{T1<T2}. Thus $x_0=0$ and the first part of (iv) is proved. 

We can also obtain $w(T_1, x)<0$ for $x\in (0, L(b))$ as in Step 2 of Lemma \ref{T1<T2}. Since $T_1=T_2$, clearly $w(T_1, L(b))=0$. This proves part (ii).

By the argument in Step 4 of Lemma \ref{T1<T2}, we can find small $\epsilon_1>0$ and $\epsilon_2\in (0,\epsilon_1]$ such that for every $t\in (T_1,T_1+\epsilon_2]$, $w(t,x)$
over $[L(b)-\epsilon_1, L(b)]$  has a unique nondegenerate zero $y(t)\in (L(b)-\epsilon_1, L(b))$. Moreover, 
$\lim_{t\searrow T_1}y(t)=L(b)$. 

Using the maximum principle to $w$ over $\{(t,x): -y(t)<x<y(t), T_1<t\leq T_1+\epsilon_2\}$ we see that
$w<0$ in this region. In particular, $w(t,x)<0$ for $x\in [0, y(t))$ and $t\in (T_1, T_1+\epsilon_2]$.
Hence $y(t)$ is the unique zero of $w(t,x)$ over $[0, L(b)]$ for every $t\in (T_1, T_1+\epsilon_2]$. 

We may now follow Step 5 of Lemma \ref{T1<T2} to complete the proof.
\end{proof}

Let us note that Lemmas \ref{T1<T2}, \ref{T1>T2} and \ref{T1=T2} give a complete classification of the
sign-changing  patterns of the function $w_b(t,x):=u(t,x)-V_b(x)$, for every fixed $b\in (0, \delta_0)$ and $t\geq 1$.
We next use this information to prove \eqref{assump}.

\subsubsection{\bf Completion of the proof of \eqref{assump}.}

Denote 
\[
m(\delta):=\sup_{b\in (0,\delta]}\frac{l(b)}{L(b)}.
\]
By Lemma \ref{lemm-key} we find that $m(\delta)$ decreases to $0$ as $\delta\to 0$. Therefore, \eqref{assump}
is a direct consequence of the following result.

\begin{prop}\label{theta(t)}
For any given $\delta\in (0,\delta_0)$, there exists $M=M_\delta>0$ such that $\theta(t)/h(t)\leq m(\delta)$ for $t\geq M$.
\end{prop}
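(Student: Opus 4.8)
The plan is to exploit the sign-changing classification obtained in Lemmas \ref{T1<T2}, \ref{T1>T2} and \ref{T1=T2}, applied with the single value $b=\delta$. The key observation is that for $t$ past the relevant switching time ($T_2$ in the $T_1<T_2$ case, $T_1$ in the $T_1>T_2$ case, $T_1=T_2$ in the degenerate case), the function $w_\delta(t,\cdot)=u(t,\cdot)-V_\delta(\cdot)$ has sign-changing pattern $[-0+]$ over $[0,L(\delta)]$, with a unique nondegenerate zero $y(t)\in(0,L(\delta))$. Thus for $x\in[0,y(t))$ we have $u(t,x)<V_\delta(x)$, and for $x\in(y(t),L(\delta)]$ we have $u(t,x)>V_\delta(x)$. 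First I would fix $M_\delta$ to be any time beyond this switching time (it is finite by the lemmas), so that this pattern holds for all $t\geq M_\delta$.

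Next I would locate $\theta(t)$ and $h(t)$ relative to $y(t)$ and $l(\delta),L(\delta)$. Recall $V_\delta(l(\delta))=\theta$, $V_\delta(L(\delta))=0$, $V_\delta$ is strictly decreasing on $(0,L(\delta)]$, and similarly $u(t,\cdot)$ is strictly decreasing on $(0,h(t))$ with $u(t,\theta(t))=\theta$. The crucial consequences of the pattern $[-0+]$ are: (a) since $u(t,x)>V_\delta(x)\geq 0$ for $x\in(y(t),L(\delta)]$, in particular $u(t,L(\delta))>0$, forcing $h(t)>L(\delta)$; and (b) I claim $\theta(t)<y(t)$. To see (b): at $x=\theta(t)$ we would need to know the sign of $w_\delta$; if $\theta(t)\geq y(t)$ then $u(t,\theta(t))\geq V_\delta(\theta(t))\geq V_\delta(y(t))$, but actually the cleaner route is to compare at $x=l(\delta)$. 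Since for $x\in[l(\delta),L(\delta)]$ we have $V_\delta(x)\leq\theta$, and if $y(t)\leq l(\delta)$ then $u(t,l(\delta))>V_\delta(l(\delta))=\theta$ is impossible once... — I need to be careful here. The correct comparison: $u(t,x)>V_\delta(x)$ on $(y(t),L(\delta)]$ gives, evaluating where $V_\delta=\theta$, i.e. at $x=l(\delta)$, that if $l(\delta)>y(t)$ then $u(t,l(\delta))>V_\delta(l(\delta))=\theta$, hence $\theta(t)>l(\delta)$; whereas if $l(\delta)\leq y(t)$ then for $x\in(y(t),L(\delta)]\subset(l(\delta),L(\delta)]$ we still get $u(t,x)>V_\delta(x)$, which near $x=L(\delta)$ says little. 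Thus in the first subcase $l(\delta)<\theta(t)$, but I want an \emph{upper} bound on $\theta(t)$. For that, use $u(t,x)<V_\delta(x)$ on $[0,y(t))$: if $\theta(t)<y(t)$ there is nothing further; if $\theta(t)\geq y(t)$... Here I think the clean statement is: because $u(t,x)<V_\delta(x)$ for $x\in[0,y(t))$, at any $x<y(t)$ with $V_\delta(x)\leq\theta$ we get $u(t,x)<\theta$, hence $\theta(t)<x$; taking $x\downarrow\max\{l(\delta),\text{(values just below }y(t))\}$ gives $\theta(t)\leq\max\{l(\delta),y(t)\}$. Combined with $h(t)>L(\delta)$ this yields
\[
\frac{\theta(t)}{h(t)}<\frac{\max\{l(\delta),y(t)\}}{L(\delta)}\leq\max\left\{\frac{l(\delta)}{L(\delta)},\frac{y(t)}{L(\delta)}\right\}.
\]
Since $y(t)<L(\delta)$ the second term is $<1$, which is not yet good enough, so the remaining work is to push $y(t)$ down to the scale of $l(\delta)$.

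The main obstacle, therefore, is showing that $y(t)\leq l(\delta)$ (or at least $y(t)\leq l(\delta')$ for a comparable $\delta'$) for all large $t$ — equivalently, ruling out that the crossover point $y(t)$ lingers in the ``linear tail'' region $(l(\delta),L(\delta))$ of $V_\delta$. The intended mechanism is that $u(t,\cdot)\to\theta$ in $C^1_{loc}$, so on any fixed compact set $u(t,x)$ is eventually close to $\theta>V_\delta(x)$ once $V_\delta(x)<\theta$, i.e. once $x>l(\delta)$; but $l(\delta)$ and $L(\delta)$ are \emph{fixed} (since $b=\delta$ is fixed), so the interval $(l(\delta),L(\delta))$ is a fixed compact set, and for $t$ large $u(t,x)>\theta-\eta>V_\delta(x)$ there — forcing $w_\delta(t,x)>0$ on all of $(l(\delta),L(\delta)]$, hence $y(t)\leq l(\delta)$. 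This is the step to carry out carefully: choose $\eta>0$ with $\theta-\eta>\sup_{x\in[l(\delta)+\kappa,\,L(\delta)]}V_\delta(x)$ for small $\kappa>0$ using strict monotonicity of $V_\delta$, then use locally uniform convergence $u(t,x)\to\theta$ on $[0,L(\delta)]$ to get $u(t,x)>\theta-\eta$ for $t\geq M_\delta$ (enlarging $M_\delta$), conclude $w_\delta(t,x)>0$ on $[l(\delta)+\kappa,L(\delta)]$, hence $y(t)<l(\delta)+\kappa$, and let $\kappa\to0$ through the allowed range. Feeding $y(t)<l(\delta)+\kappa$ into the displayed inequality and combining with the earlier bound $\theta(t)\leq\max\{l(\delta),y(t)\}$ gives $\theta(t)<l(\delta)+\kappa$, hence
\[
\frac{\theta(t)}{h(t)}<\frac{l(\delta)+\kappa}{L(\delta)},\qquad t\geq M_\delta,
\]
and letting $\kappa\to0$ yields $\theta(t)/h(t)\leq l(\delta)/L(\delta)\leq m(\delta)$ for $t\geq M_\delta$, as required. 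I would double-check the endpoint bookkeeping (whether $\theta(t)/h(t)\leq m(\delta)$ or a strict inequality, and that $h(t)>L(\delta)$ is genuinely strict, which it is by the Hopf lemma argument already embedded in the pattern $[-0+]$), but the structure above is the complete argument.
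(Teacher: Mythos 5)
Your plan breaks down at its central inequality, and the failure is structural, not cosmetic. You correctly identify that for $t$ large the pattern of $w_\delta(t,\cdot)=u(t,\cdot)-V_\delta(\cdot)$ is $[-0+]$ on $[0,L(\delta)]$ with unique zero $y(t)$, and that $h(t)>L(\delta)$. But the inequality $\theta(t)\leq\max\{l(\delta),y(t)\}$ you then use is false in exactly the regime you steer into. Because $u(t,\cdot)$ and $V_\delta$ are both strictly decreasing with the single crossing at $y(t)$, the relative positions of $\theta(t)$ and $l(\delta)$ are \emph{locked} to the relative positions of $l(\delta)$ and $y(t)$: evaluating $w_\delta(t,l(\delta))$ shows that $y(t)>l(\delta)\Longleftrightarrow u(t,l(\delta))<\theta\Longleftrightarrow\theta(t)<l(\delta)$, while $y(t)<l(\delta)\Longleftrightarrow u(t,l(\delta))>\theta\Longleftrightarrow\theta(t)>l(\delta)$. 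Your Step 4 uses the $C^1_{loc}$ convergence $u(t,\cdot)\to\theta$ to push $y(t)<l(\delta)+\kappa$ for $t$ large, which eventually forces $y(t)<l(\delta)$; but once that happens, the sign pattern gives $\theta(t)>l(\delta)$, not $\theta(t)\leq l(\delta)$. So the two halves of your argument (upper bound on $\theta(t)$ from the sign pattern, upper bound on $y(t)$ from convergence) are mutually exclusive rather than cumulative, and you cannot conclude $\theta(t)/h(t)<(l(\delta)+\kappa)/L(\delta)$.

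There is also a more global obstruction to any fixed-$b$ comparison: in the transition regime both $h(t)$ and $\theta(t)$ tend to infinity (the paper's own proof records $\theta(t)>l(b(t))\to\infty$ along $\Sigma_2$), so no inequality of the form $\theta(t)\leq l(\delta)+\kappa$, with the right side a fixed constant, can hold for all large $t$. This is precisely why the paper's proof makes the comparison parameter $t$-dependent: it sets $b(t)$ by $L(b(t))=h(t)$ (so $b(t)\to 0$, $l(b(t))\to\infty$), and for each $t\in\Sigma_2$ it hunts for an auxiliary $b'\in(0,\delta)$ satisfying both $l(b')\geq\theta(t)$ and $L(b')\leq h(t)$, so that $\theta(t)/h(t)\leq l(b')/L(b')\leq m(\delta)$. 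Finding such $b'$ is exactly where the hard work lies --- in particular the case $l(u(t,0)-\theta)<\theta(t)$ requires tracking the two zeros $X(b),Y(b)$ of $w_b(t,\cdot)$ and the quantity $L(b)$ continuously as $b$ decreases, using Lemmas \ref{T1<T2}--\ref{T1=T2} to rule out degeneracies. None of this has a counterpart in your sketch. To make your idea work you would have to replace the single comparison function $V_\delta$ by the continuum $\{V_b\}_{b\in(0,\delta]}$ and carry out the comparison in the parameter $b$ at each fixed time $t$, which is in substance the paper's argument.
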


\begin{proof}
Let $\delta\in (0,\delta_0)$ be arbitrarily given. Since $L(b)$ is continuous in $b$ and $L(b)\to+\infty$ as $b\to 0$, 
for any given $h\in (L(\delta), +\infty)$, there exists $b\in (0,\delta)$ such that $L(b)=h$. We now choose $M_1>0$ such that $h(t)>L(\delta)$ for $t\geq M_1$. Then we can find $b(t)\in (0, \delta)$ such that \footnote{ $b(t)$ need not be unique, nor continuous with respect to $t$. We may require $b(t)$ to be the minimal solution $b$ to $h(t)=L(b)$ to make $b(t)$ uniquely determined.}
\[
h(t)=L(b(t)) \mbox{ for } t\geq M_1.
\]
Since $h(t)\to +\infty$ as $t\to+\infty$, we necessarily have $b(t)\to 0$ as $t\to+\infty$.

We define
\[
\Sigma_1:=\{t\geq M_1: \theta(t)\leq l(b(t))\},\; \Sigma_2:=\{t\geq M_1: \theta(t)>l(b(t))\}.
\]
For $t\in \Sigma_1$, since $b(t)<\delta$, clearly
\[
\frac{\theta(t)}{h(t)}=\frac{\theta(t)}{L(b(t))}\leq \frac{l(b(t))}{L(b(t))}\leq m(\delta).
\]
If there exists $M_2>M_1$ such that $\Sigma_2\cap [M_2, +\infty)=\emptyset$, then we can take $M=M_2$ and our proof is complete.

It remains to consider the case that $\Sigma_2\cap[n,+\infty)\not=\emptyset$ for every $n\geq 1$.
Let $M_2>M_1$ be chosen such that, for every $t\in\Sigma_2\cap[M_2, +\infty)$,
\begin{equation}
\label{M2}
u(t,0)<\theta+\delta \mbox{ and }
\theta(t)>l(\delta).
\end{equation}
The second inequality is possible since for $t\in\Sigma_2$, $\theta(t)>l(b(t))\to+\infty$ as $t\to+\infty$.

We now fix $t\in \Sigma_2\cap [M_2, +\infty)$, and denote
\[
b_0=b(t),\; b_1=u(t,0)-\theta.
\]
To stress the $b$-dependence of $T_1$ and $T_2$ determined by $w_b(t,x)$ in section 4.3.2, we will write
\[
T_1=T_1^b,\; T_2=T_2^b.
\]

{\bf Claim:} $b_0<b_1<\delta$.
\ \ The second inequality follows directly from \eqref{M2}. We next prove the first by examining the sign-changing pattern of $w_{b_0}(t,x)$. Since $h(t)=L(b_0)$, we are in the case $t=T_2^{b_0}$ considered in section 4.3.2. By Lemmas \ref{T1<T2}-\ref{T1=T2}, we find that $w_{b_0}(t,x)<0$ for $x<L(b_0)$ but close to $L(b_0)$. On the other hand, from $\theta(t)>l(b_0)$ we find $w_{b_0}(t, l(b_0))>0$. Hence $w_{b_0}(t,x)$ has a zero in $(l(b_0), L(b_0))$. Such a situation can only happen in the case described by Lemma \ref{T1>T2} part (ii), where $w_{b_0}(t,x)$ has sign-changing pattern $[+0-0]$ over $[0, L(b_0)]$. Therefore $w_{b_0}(t,0)=b_1-b_0>0$. This proves the claim.

The discussions below are organized according to the  three cases: $l(b_1)>\theta(t)$, $\l(b_1)=\theta(t)$ and $l(b_1)<\theta(t)$.

{\bf The case ``$l(b_1)>\theta(t)$''.} \ \  Since $l(\delta)<\theta(t)$ (see \eqref{M2}), by continuity there exists $b_2\in (b_1,\delta)$ such that $l(b_2)=\theta(t)$. We next prove $h(t)>L(b_2)$ by examining the sign-changing pattern of $w_{b_2}(t,x)$.
From $b_2>b_1$ we obtain $w_{b_2}(t,0)=b_1-b_2<0$. Hence we are in the case $t>T_1^{b_2}$. Moreover, $l(b_2)=\theta(t)$ implies $w_{b_2}(t, l(b_2))=0$. Therefore we can only be in the case described in Lemma \ref{T1<T2} part (v), or Lemma \ref{T1>T2} part (v), or Lemma \ref{T1=T2} part (iii). In all these cases we have $t>T_2^{b_2}$. Hence $h(t)>L(b_2)$. It follows that
\[
\frac{\theta(t)}{h(t)}=\frac{l(b_2)}{h(t)}<\frac{l(b_2)}{L(b_2)}\leq m(\delta).
\]

{\bf The case ``$l(b_1)=\theta(t)$''.} \ \  We examine the sign-changing pattern of $w_{b_1}(t,x)$. Clearly
$w_{b_1}(t,0)=0$. So we are in the situation that $t=T_1^{b_1}$. Moreover, $w_{b_1}(t,x)$ has a zero at $x=\theta(t)=l(b_1)\in (0, L(b_1))$. This is possible only in the case of Lemma \ref{T1>T2} part (iv), which indicates that $t>T_2^{b_1}$. Hence $h(t)>L(b_1)$ and we have
\[
\frac{\theta(t)}{h(t)}=\frac{l(b_1)}{h(t)}<\frac{l(b_1)}{L(b_1)}\leq m(\delta).
\]

{\bf The case ``$l(b_1)<\theta(t)$''.} This is the most complicated 
case to handle. We first examine the sign-changing pattern of $w_{b_1}(t,x)$. Due to $w_{b_1}(t,0)=0$ we are in the case $t=T_1^{b_1}$. By Lemmas \ref{T1<T2}-\ref{T1=T2}, we find that $w_{b_1}(t,x)<0$ for $x>0$ but close to $0$. On the other hand, $l(b_1)<\theta(t)$ implies that $w_{b_1}(t, l(b_1))>0$. Therefore $w_{b_1}(t,x)$ has a zero in $(0, l(b_1))$. This is possible only in the case of Lemma \ref{T1>T2} part (iv), where $t>T_2^{b_1}$ and $w_{b_1}(t,x)$ has sign-changing pattern $[0-0+]$ over $[0, L(b_1)]$ with a unique nondegenerate zero $y(t)\in (0, L(b_1))$. As $w_{b_1}(t,l(b_1))
>0$, we necessarily have $y(t)<l(b_1)$.

Since $l(b_1)<\theta(t)$ and $l(b)\to+\infty$ as $b\to 0$, by decreasing $b$ from $b_1$ we can find  $b_3\in(0, b_1)$ such that
\begin{equation}\label{b3}\mbox{
 $l(b_3)=\theta(t)$ and $l(b)<\theta(t)$ for $b\in (b_3, b_1]$.}
\end{equation}
 We want to show that $L(b_3)<h(t)$. If this is proved, then as before we have
\[
\frac{\theta(t)}{h(t)}=\frac{l(b_3)}{h(t)}<\frac{l(b_3)}{L(b_3)}\leq m(\delta).
\]
Thus we can take $M=M_2$ and the proof of the proposition is complete.

It remains to prove 
\[
L(b_3)<h(t).
\]
 Since $h(t)>L(b_1)$, by continuity, for $b<b_1$ but close to $b_1$ we still have
$h(t)>L(b)$. For such $b$ clearly $w_b(t,0)=b_1-b>0$, and hence  $t\in(T_2^b,T_1^b)$. We now examine the sign-changing profile of $w_b(t,x)$.
By Lemma \ref{T1>T2} part (iii), $w_b(t,x)$ has sign-changing pattern $[+0-0+]$ over $[0, L(b)]$ with exactly two nondegenerate zeros $x(t)<y(t)$ in $ (0, L(b))$. Let us also note that from $y(t)<l(b_1)$ we obtain $y(t)<l(b)$ for such $b$.

As we are now varying $b$ while keeping $t$ fixed, it is convenient to regard $w_b(t,x)$ as a function of $(b,x)$, and write
\[
W(b,x)=w_b(t,x).
\]
Similarly, we will write $X(b)=x(t)$ and $Y(b)=y(t)$ to stress their dependence on $b$. Thus $W(b,x)$ has sign-changing pattern $[+0-0+]$ over $[0, L(b)]$ with exactly two nondegenerate zeros $X(b)<Y(b)$ for  $b<b_1$
but close to $b_1$.

By the continuous dependence of $V_b(x)$ on $b$, we find that $W(b,x)$, $W_x(b,x)$  are continuous in $b$ uniformly for $x$
in bounded sets.
By the implicit function theorem we find that $X(b)$ and $Y(b)$ are $C^1$ functions of $b$ as long as they are nondegenerate zeros of $W(b,x)$. 
Hence by our analysis above, $X(b)$ and $Y(b)$ are defined for all $b<b_1$ and close to $b_1$, and they are $C^1$ functions of $b$.  We now consider the functions $X(b)$, $Y(b)$ and $L(b)$ as $b$ is decreased further. We claim that for all $b\in [b_3, b_1)$, $X(b)$ and $Y(b)$ are defined and
\[
0<X(b)<Y(b)<L(b)<h(t).
\]
Set 
\[
\Lambda:=\big\{c\in (0, b_1): X(b), Y(b) \mbox{ are nondegenerate zeros of $W(b,x)$}\]
\[
\hspace{4cm} \mbox{ and $0<X(b)<Y(b)<L(b)<h(t)$ for all } b\in[c, b_1)\big\},
\]
and 
\[
b_*=\inf \Lambda.
\]
By our earlier discussion, we have $b_*<b_1$.
We prove that 
\[
b_*< b_3.
\]
 Suppose on the contrary $b_*\in [ b_3, b_1)$. Let us first examine the limits of $X(b)$ and $Y(b)$ as $b$ decreases to $b_*$. If at least one of these limits does not exist, then $W(b_*,x)=w_{b_*}(t,x)$ would be identically zero for $x$ in some interval contained in $[0, L(b_*)]$. But by Lemmas \ref{T1<T2}-\ref{T1=T2}, no function $w_b(t,x)$ with fixed $t\geq 1$ and $b\in (0,\delta_0)$ can have such a sign-changing pattern.  Hence both limits exist, and we denote them by $X(b_*)$ and $Y(b_*)$, respectively.

Evidently $W(b_*, X(b_*))=W(b_*, Y(b_*))=0$. Since $W(b_*,0)=b_1-b_*>0$, we necessarily have
\[
0<X(b_*)\leq Y(b_*)\leq L(b_*)\leq h(t).
\]
We claim that 
\[
Y(b_*)\leq l(b_*). 
\]
Otherwise, $Y(b_*)>l(b_*)$,
and in view of $Y(b)<l(b)$ for $b<b_1$ but close to $b_1$, we can find $b_4\in (b_*, b_1)$ such that $Y(b_4)=l(b_4)$, which implies that $u(t, l(b_4))=\theta$ and hence $\theta(t)=l(b_4)$, contradicting \eqref{b3}. This proves our claim.

From $W(b_*,0)>0$ and $L(b_*)\leq h(t)$ we see that $T_2^{b_*}\leq t<T_1^{b_*}$.
Moreover, for each $b\in (b_*, b_1)\subset\Lambda$, we have $T_2^b<t<T_1^b$ and by Lemma \ref{T1>T2}, $W(b,x)$ has sign-changing pattern $[+0-0+]$. It follows that 
\begin{equation}
\label{b*}
W(b_*,x)\geq 0 \mbox{ for } x\in [Y(b_*), L(b_*)].
\end{equation}

We show next that $L(b_*)<h(t)$. Otherwise $L(b_*)=h(t)$ and hence $t=T_2^{b_*}<T_1^{b_*}$. Therefore
we can use Lemma \ref{T1>T2} part (ii) to conclude that $W(b_*, x)$ has sign-changing pattern $[+0-0]$, which
contradicts \eqref{b*}. This proves $L(b_*)<h(t)$ and hence $t\in (T_2^{b_*}, T_1^{b_*})$. But then we can apply
Lemma \ref{T1>T2} part (iii) to conclude that $W(b_*,x)$ has sign-changing pattern$[+0-0+]$ over $[0, L(b_*)]$, with exactly two nondegenerate zeros. This implies that $b_*\in\Lambda$ and by the implicit function theorem,
any $b<b_*$ and close to $b_*$ also belongs to $\Lambda$, contradicting the definition of $b_*$. We have thus proved $b_*<b_3$ and so $b_3\in\Lambda$. In particular, $L(b_3)<h(t)$, as we wanted.
\end{proof}

\end{document}